\numberwithin{equation}{section}
\newtheorem{MainThm}{Theorem}
\newtheorem{Mainconjecture}[MainThm]{Conjecture}
\theoremstyle{definition}
\newtheorem{defn}[equation]{Definition}
\newtheorem{construction}[equation]{Construction}
\theoremstyle{remark}
\newtheorem{rem}[equation]{Remark}
\newtheorem{examples}[equation]{Examples}
\newtheorem{assumption}[equation]{Assumption}
\theoremstyle{plain}
\newtheorem{thm}[equation]{Theorem}
\newtheorem{lem}[equation]{Lemma}
\newtheorem{prop}[equation]{Proposition}
\newtheorem{cor}[equation]{Corollary}
\newcommand{\lra}{\longrightarrow}
\newcommand{\lla}{\longleftarrow}
\newcommand{\bC}{\mathbb{C}}
\newcommand{\bF}{\mathbb{F}}
\newcommand{\bP}{\mathbb{P}}
\newcommand{\bQ}{\mathbb{Q}}
\newcommand{\bR}{\mathbb{R}}
\newcommand{\bZ}{\mathbb{Z}}
\newcommand{\bH}{\mathbb{H}}
\newcommand{\bK}{\mathbb{K}}
\newcommand{\cA}{\mathcal{A}}
\newcommand{\Riem}{\mathcal{R}}
\newcommand{\Ad}{\mathrm{Ad}}
\newcommand{\fg}{\mathfrak{g}}
\newcommand{\ko}{\mathrm{ko}}
\newcommand{\KO}{\mathrm{KO}}
\newcommand{\im}{\operatorname{Im}}
\newcommand{\id}{\mathrm{Id}}
\newcommand{\ind}{\mathrm{ind}}
\newcommand{\inddiff}{\mathrm{inddiff}}
\newcommand{\res}{\mathrm{res}}
\newcommand{\cstar}{\mathrm{C}^{\ast}_{\mathrm{r}}}
\newcommand{\ahat}{\hat{\mathfrak{a}} }
\newcommand{\Stab}{\mathrm{Stab}}
\newcommand{\scpr}[1]{\langle #1 \rangle}
\newcommand{\rp}{\mathbb{R}\mathbb{P}}
\newcommand{\kp}{\mathbb{K}\mathbb{P}}
\newcommand{\torp}{\mathrm{tor}}
\newcommand{\dtorp}{\mathrm{dtor}}
\newcommand{\hp}{\mathbb{H}\mathbb{P}}
\newcommand{\cp}{\mathbb{C}\mathbb{P}}
\newcommand{\Spin}{\mathrm{Spin}}
\newcommand{\SO}{\mathrm{SO}}
\newcommand{\Sp}{\mathrm{Sp}}
\newcommand{\Fr}{\mathrm{Fr}}
\newcommand{\Aut}{\mathrm{Aut}}
\newcommand{\Isom}{\mathrm{Isom}}
\newcommand{\MSpin}{\mathrm{MSpin}}
\newcommand{\trf}{\mathrm{trf}}
\newcommand{\rank}{\mathrm{rank}}
\DeclareMathOperator{\scal}{scal}
\newcommand{\MU}{\mathrm{MU}}
\newcommand{\MSO}{\mathrm{MSO}}
\newcommand{\norm}[1]{\| #1 \|}
\newcommand{\hur}{\mathrm{hur}}
\title[Homotopy type of psc spaces]{On the homotopy type of the space of metrics of positive scalar curvature}
\author{Johannes Ebert, Michael Wiemeler}
\thanks{Both authors were supported by the Deutsche Forschungsgemeinschaft (DFG, German Research Foundation) -- Project-ID 427320536 -- SFB 1442, as well as under Germany’s Excellence Strategy EXC 2044 -- 390685587, Mathematics M\"unster: Dynamics–Geometry–Structure.
}
\email{johannes.ebert@uni-muenster.de}
\email{wiemelerm@uni-muenster.de}
\date{\today}
\begin{document}

\begin{abstract}
  Let \(M^d\) be a simply connected spin manifold of dimension \(d \geq 5\) admitting Riemannian metrics of positive scalar curvature.
  Denote by \(\Riem^+(M^d)\) the space of such metrics on \(M^d\).
  We show that \(\Riem^+(M^d)\) is homotopy equivalent to \(\Riem^+(S^d)\), where \(S^d\) denotes the \(d\)-dimensional sphere with standard smooth structure.

  We also show a similar result for simply connected non-spin manifolds \(M^d\) with \(d\geq 5\) and \(d\neq 8\).
  In this case let \(W^d\) be the total space of the non-trivial \(S^{d-2}\)-bundle with structure group \(SO(d-1)\) over \(S^2\). Then \(\Riem^+(M^d)\) is homotopy equivalent to \(\Riem^+(W^d)\).
\end{abstract}

\maketitle

\setcounter{tocdepth}{2}
\tableofcontents

\section{Introduction}

When studying Riemannian metrics of positive scalar curvature, there are two basic questions:
\begin{enumerate}
\item Given a closed manifold \(M^d\), is there a metric of positive scalar curvature on \(M^d\)?
\item If the answer to the first question is ``yes'', what can be said about the topology of the space \(\Riem^+(M^d)\) of positive scalar curvature on \(M^d\)?
\end{enumerate}

For simply connected manifolds of dimension \(d\geq 5\) the first question has the following answer: If \(M^d\) is non-spin then it always admits a metric of positive scalar curvature by results of Gromov and Lawson \cite{GL1} and Schoen and Yau \cite{SchoenYau}.

For spin manifolds $M^d$, there is an index-theoretic necessary condition for $M^d$ to admit a psc metric. Namely, the $\ahat$-invariant $\ahat (M) \in \ko_d$ has to be trivial ($\ahat(M)$ only depends on the spin cobordism class $[M]$ of $M$). A celebrated result of Stolz \cite{Stolz} states that this condition is also sufficient, when $M^d$ is simply connected and $d \geq 5$.

In this paper we study the second question.
Our first main result is as follows.

\begin{MainThm}\label{MainThm:spin}
Let $M$ be a simply connected closed spin manifold of dimension $d \geq 5$. Then if $M$ admits a psc metric, there is a homotopy equivalence
\[
\Riem^+ (M) \simeq \Riem^+ (S^d).
\]
Here \(S^d\) denotes the \(d\)-dimensional sphere with standard smooth structure.
\end{MainThm}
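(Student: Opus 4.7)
The strategy is to reduce to the Chernysh--Walsh theorem by trading the spin-bordism class of $M$ for standard psc generators, and then to handle those generators directly. Two known inputs do most of the work: first, the Chernysh--Walsh theorem combined with the Kreck--Stolz surgery theorem gives $\Riem^+(M_0) \simeq \Riem^+(M_1)$ whenever $M_0, M_1$ are simply connected, spin cobordant, and admit psc in dimension $\geq 5$; second, Stolz's theorem says that a simply connected spin manifold of dimension $\geq 5$ admits psc if and only if its $\alpha$-invariant vanishes. Since $[S^d] = 0 \in \Omega^{\Spin}_d$ for $d \geq 1$, the content of the theorem lies in the case $\alpha(M) = 0$ but $[M] \neq 0 \in \Omega^{\Spin}_d$.

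I would then invoke Stolz's description of $\ker(\alpha) \subset \Omega^{\Spin}_d$ as additively generated by total spaces of $\hp^2$-bundles, each simply connected and carrying psc via the B\'erard--Bergery submersion construction. Pick generators $N_1, \ldots, N_k$ of $\ker(\alpha)$ with $[M] = \sum_i [N_i]$. Chernysh--Walsh applied to a spin cobordism realising $[M] = [N_1 \# \cdots \# N_k]$, together with the $S^0$-surgeries relating the disjoint union to the connected sum (codimension $d \geq 5$, hence admissible), yields
\[
\Riem^+(M) \;\simeq\; \Riem^+(N_1 \# \cdots \# N_k) \;\simeq\; \prod_{i=1}^k \Riem^+(N_i).
\]
If one can prove $\Riem^+(N_i) \simeq \Riem^+(S^d)$ for each generator $N_i$, the same connected-sum identity applied to iterated copies of $S^d$ (using $S^d \# \cdots \# S^d = S^d$) collapses this product to a single copy of $\Riem^+(S^d)$, completing the proof.

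\textbf{Main obstacle.} The heart of the argument is therefore the single-generator case: for each $\hp^2$-bundle $N$ representing a generator of $\ker(\alpha)$, prove $\Riem^+(N) \simeq \Riem^+(S^d)$. Chernysh--Walsh is inherently bordism-sensitive and cannot reach this, so a genuinely new input is required. I expect it to come from extending Chernysh--Walsh from spin bordism to psc bordism in the spirit of Ebert--Randal-Williams, or from a psc analogue of the Galatius--Randal-Williams cobordism category whose classifying space controls $\Riem^+$. The rich geometry of $\hp^2$-bundles---Riemannian submersions, $\mathrm{PSp}(3)$-symmetry, explicit fibrewise psc metrics---should provide the tools to realise $N$ as psc null-bordant in the appropriate enhanced sense, and hence to produce the desired homotopy equivalence. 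Making this precise, and verifying it coherently across all Stolz generators, is the most delicate step.
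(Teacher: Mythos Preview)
Your reduction contains a genuine error: the connected-sum formula
\[
\Riem^+(N_1 \# \cdots \# N_k) \simeq \prod_i \Riem^+(N_i)
\]
is false. Testing it on $S^d \# S^d \cong S^d$ gives $\Riem^+(S^d) \simeq \Riem^+(S^d)^2$, forcing every finitely generated homotopy group of $\Riem^+(S^d)$ to vanish, contradicting \cite{BERW}. The $S^0$-surgery you invoke only justifies one of the two arrows needed: Chernysh's theorem applies to $S^0 \times D^d \hookrightarrow N_1 \coprod N_2$ (codimension $d$), but the dual embedding $S^{d-1} \times D^1 \hookrightarrow N_1 \# N_2$ has codimension $1$, outside the range of Theorem~\ref{thm:surgerytheorem}. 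Equivalently, the cobordism theorem (Theorem~\ref{cobordismtheorem}) does not apply to $N_1 \coprod N_2$ because a disjoint union is not relatively $2$-connected over $B\Spin$. Note also that, even granting the formula, your ``main obstacle''---proving $\Riem^+(N)\simeq\Riem^+(S^d)$ for a single $\hp^2$-bundle $N$---is already the full theorem for those particular $N$, so nothing has actually been reduced.

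The paper avoids any product decomposition. It introduces the notion of an \emph{admissible splitting} $M = M_0 \cup_N M_1$ (right- and left-stable psc metrics on the two pieces whose boundary values are isotopic, with $2$-connected inclusions $N \hookrightarrow M_i$), and proves in Theorem~\ref{thm:admissiblesplittingleadstohomotopyinvariance} that if a class $x \in \Omega^\Spin_d$ contains a representative with such a splitting, then $\Riem^+(L) \simeq \Riem^+(K)$ whenever $K$, $L$ are relatively $2$-connected and $[L]=[K]+x$. One then shifts from $[M]$ to $0=[S^d]$ directly, never factoring through a product. Your geometric intuition about $\hp^2$-bundles is on target: the admissible splitting comes from the $P(\Sp(2)\times\Sp(1))$-invariant decomposition of $\hp^2$ into two disc bundles of rank $4$ and $8$, and Korda\ss{}' extension of Chernysh to nontrivial normal bundles (Theorem~\ref{lemma:connectionmetricsrightstable}) supplies the required stable metrics. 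A separate transfer and spectral-sequence argument (Proposition~\ref{prop:reducingstructuregroup}) reduces the structure group from Stolz's $P\Sp(3)$ to $P(\Sp(2)\times\Sp(1))$, so that every class in $\ker\ahat$ is represented by such a bundle.
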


\begin{rem}
\begin{enumerate}
\item It has been known before that the homotopy type of $\Riem^+ (M)$ only depends on the spin cobordism class $[M] \in \Omega_d^\Spin$. More precisely, if $M,N$ are simply connected, spin and of dimension $d \geq 5$, and $[M]=[N]$, then $\Riem^+ (M) \simeq \Riem^+ (N)$. This is a consequence of Chernysh's refinement \cite{Chernysh} of the Gromov--Lawson surgery theorem \cite{GL1}, see \cite[Corollary 4.2]{WalshC}.  
\item It was shown before by Kordass \cite[Corollary 3.6]{Kordass} that $\Riem^+ (\hp^2) \simeq \Riem^+ (S^8)$, giving the first example of a homotopy equivalence between spaces of psc metrics on manifolds which are not cobordant. In fact, our proof of Theorem \ref{MainThm:spin} can be viewed as a generalization of the proof in \cite{Kordass}, and the main technical result of \cite{Kordass} (Theorem 3.1 of that paper) is a major ingredient for the present paper. 
\item Theorem~\ref{MainThm:spin} also holds in dimensions two and three because in these dimensions the spheres are the only simply connected manifolds by the Poincar\'e conjecture.
  In contrast to this it is completely open whether an analogue of the theorem holds in dimension four.
\end{enumerate}
\end{rem}

It is known that \(\Riem^+(S^d)\) has a very rich topology (though its homotopy type is not understood). There is a map
\[
\inddiff_{d,m}: \pi_m (\Riem^+(S^d)) \to \ko_{m+d+1}
\]
which is constructed using index theory, see \cite{Hitchin} or \cite{BERW}. For example, the following is known:

\begin{enumerate}
\item\label{item:0}  It is known that $\inddiff_{4k-1,0}$ is surjective if $k \geq 2$, see \cite{GL2} or \cite{kreckstolz2}. As $\ko_{4k}=\bZ$, this proves that \(\Riem^+(S^{4k-1})\) has infinitely many components when $k \geq 2$. This is in sharp contrast to the situation in low dimensions. Indeed, by \cite{Marques}, \(\Riem^+(S^3)\) is connected. Moreover, \(\Riem^+(S^2)\) is known to be contractible \cite[Theorem 3.4]{RosStolz}.
\item\label{item:1} The map $\inddiff_{d,m}$ is surjective after tensoring with $\bQ$ for all $d \geq 6$ and all $m\geq 1$ \cite{BERW}. For previous results in this direction see e.g. \cite{Hitchin}, \cite{HSS}, \cite{crowleyschick}, \cite{CSS}.
\item\label{item:3} \(\Riem^+(S^d)\), \(d\geq 3\), is homotopy equivalent to an \(H\)-space. In particular, \(\pi_1(\Riem^+(S^d,g_{S^d}))\) is abelian. Here \(g_{S^d}\) denotes the round metric on \(S^d\). Moreover, the component of \(g_{S^d}\) has the structure of an \(d\)-fold loop space \cite{WalshH-space}.
\item\label{item:2} The connected component of the round metric on \(S^d\), \(d\geq 6\), is homotopy equivalent to an infinite loop space \cite{ERWpsc3}. However, until now it is not known whether this loop space structure is compatible with the loop space structure in \eqref{item:3}.
\item The results mentioned in \eqref{item:0}, \eqref{item:1} were also known to be true for every other \(d\)-dimensional spin manifold admitting a psc metric. It follows from Theorem \eqref{MainThm:spin} that the results of \eqref{item:3} and \eqref{item:2} also hold for all simply connected spin manifolds admitting a psc metric.
\end{enumerate}

Let us turn to the non-spin case. If $M$ is simply connected, of dimension $d \geq 5$, and does not admit a spin structure, it admits a psc metric, by the above mentioned result of Gromov and Lawson. As an example for such a manifold, one can take the unit sphere bundle $W^d$ of the unique nontrivial vector bundle of rank $d-1$ over $S^2$. There is a similar cobordism invariance theorem for the homotopy type of $\Riem^+ (M)$ in that case: namely, if $M,N$ are both simply connected and oriented, do not admit spin structures, and their cobordism classes in the oriented cobordism group $\Omega_d^\SO$ agree, then $\Riem^+ (M) \simeq \Riem^+ (N)$ (see Theorem \ref{cobordismtheorem} below for a more general statement). We prove that more is true: 

\begin{MainThm}\label{MainThm:nonspin}
Let $M$ be a simply connected closed manifold of dimension $d \geq 5$ which does not admit a spin structure. Then if $d \neq 8$, there is a homotopy equivalence
\[
\Riem^+ (M) \simeq \Riem^+ (W^d).
\]
If $d=8$, there is either a homotopy equivalence $\Riem^+ (M) \simeq \Riem^+ (W^8)$ or $\Riem^+ (M) \simeq \Riem^+ (\cp^2 \times \cp^2)$. 
\end{MainThm}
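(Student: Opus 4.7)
The plan is to mimic the proof of Theorem \ref{MainThm:spin}, replacing spin cobordism invariance with oriented cobordism invariance and using Theorem \ref{MainThm:spin} itself to handle spin summands. Three ingredients drive the argument: (i) the oriented cobordism invariance of $\Riem^+$ for simply connected non-spin manifolds (Theorem \ref{cobordismtheorem}); (ii) Theorem \ref{MainThm:spin}, which furnishes $\Riem^+(K)\simeq \Riem^+(S^d)$ for every simply connected spin $K^d$ with $\ahat(K)=0$; and (iii) a parametrised connected-sum stability theorem of the form
\[
\Riem^+(M) \;\simeq\; \Riem^+(M\# K),
\]
valid for any simply connected spin $K^d$ with $\ahat(K)=0$. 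Ingredient (iii) is the natural parametrised refinement of Kordass's trick that produced $\Riem^+(\hp^2)\simeq\Riem^+(S^8)$; the plan is to extract it from the parametrised surgery machinery built up in the body of the paper, together with Theorem \ref{MainThm:spin}.

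With these ingredients in hand, the argument is short. Given a simply connected non-spin $M^d$ with $d\geq 5$, I would find a simply connected spin $K^d$ with $\ahat(K)=0$ such that
\[
[M]+\phi([K])\;=\;[W^d]\qquad\text{in } \Omega^{\SO}_d,
\]
where $\phi\colon \Omega^{\Spin}_d\to \Omega^{\SO}_d$ is the forgetful map. The connected sum $M\# K$ is then simply connected and non-spin; applying (iii) gives $\Riem^+(M)\simeq \Riem^+(M\# K)$, and (i) then yields $\Riem^+(M\# K)\simeq \Riem^+(W^d)$.

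The main obstacle is the bordism-theoretic assertion that, for $d\geq 5$ and $d\neq 8$, the class $[W^d]-[M]$ belongs to $\phi(\ker\ahat)\subseteq \Omega^{\SO}_d$ for every simply connected non-spin $M^d$. I would verify it dimension by dimension, combining the Atiyah--Hirzebruch spectral sequences for $\MSO$ and $\MSpin$ with the known computation of $\ahat$ on low-dimensional spin generators: the image of $\ker\ahat$ in $\Omega^{\SO}_d$ should turn out to be large enough that the quotient $\Omega^{\SO}_d/\phi(\ker\ahat)$ contains only a single coset of non-spin simply connected classes, represented by $[W^d]$. In dimension $8$ the same computation produces a residual $\bZ/2$-discrepancy, accounting for the two cosets represented by $[W^8]$ and $[\cp^2\times\cp^2]$, and hence for the two possible conclusions in the statement. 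Once the cobordism relation is established, a simply connected representative $K$ of the required class is obtained by surgery below the middle dimension, which is available since $d\geq 5$.
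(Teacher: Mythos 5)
Your ingredients (i) and (ii) are fine, and ingredient (iii) is indeed available: if $K$ is simply connected spin with $\ahat(K)=0$, then by the proof of Theorem~\ref{MainThm:spin} the class $\phi([K])\in\Omega_d^\SO$ is represented by an $\hp^2$-bundle with an admissible splitting, so Theorem~\ref{thm:admissiblesplittingleadstohomotopyinvariance} (applied with $\xi=B\SO\to BO$) gives $\Riem^+(M)\simeq\Riem^+(M\#K)$ for any simply connected non-spin $M$. The fatal gap is the bordism-theoretic assertion you flag as ``the main obstacle'': the claim that $[W^d]-[M]\in\phi(\ker\ahat)$ for every simply connected non-spin $M^d$. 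Since $W^d$ bounds its disc bundle, $[W^d]=0\in\Omega_d^\SO$, so this claim is equivalent to $\phi(\ker\ahat)=\Omega_d^\SO$. This fails already for $d=5$: one has $\Omega_5^\Spin=0$, hence $\phi(\ker\ahat)=0$, whereas $\Omega_5^\SO\cong\bZ/2$ is generated by the Wu manifold $\mathrm{SU}(3)/\SO(3)$, which is simply connected and non-spin. More generally, any oriented class with a nonzero Stiefel--Whitney number involving $w_2$ (such as $w_2w_3$ for the Wu manifold) can never lie in $\phi(\Omega_*^\Spin)$, so this obstruction recurs in a whole family of dimensions via products with the Wu class. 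Your heuristic for $d=8$ is also off: $\phi(\ker\ahat)$ there is the rank-one subgroup generated by $[\hp^2]=3[\cp^2]^2-2[\cp^4]$, which has \emph{infinite} index in $\Omega_8^\SO\cong\bZ^2$, not index $2$; the paper's index-$3$ ambiguity comes from a larger subgroup.

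The reason the paper's proof of Theorem~\ref{MainThm:nonspin} is ``more convoluted'' (its own words) than Theorem~\ref{MainThm:spin} is precisely that the spin contributions are not enough. The paper enlarges the subgroup of $\Omega_d^\SO$ consisting of classes with admissible-splitting representatives beyond $\phi(\ker\ahat)$ by (a) treating $\cp^n$-bundles with structure groups $U(m)\times U(n)$ and $C_2$ (complex conjugation) directly via Theorem~\ref{thm:splitting-of-projective-bundles}, (b) proving a stability result for product metrics (Proposition~\ref{cor:decomposables-have-admissibledecomposition} and Proposition~\ref{prop:diskbundle-cross-pscmanifolds}), and, crucially, (c) exhibiting an explicit admissible splitting of the Wu manifold as a union of two rank-$3$ disc bundles over $S^2$ in Proposition~\ref{prop:lastcase}, including a delicate verification that the two boundary metrics are isotopic. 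The structure theory of $\Omega_*^\SO$ (Theorem~\ref{thm:structure-oriented-bordism}) is then used to check that these constructions cover every generator. Your outline, restricted to classes in the image of spin bordism, cannot recover item (c) and therefore cannot reach the Wu manifold or its multiples.
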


\begin{rem}
\begin{enumerate}
\item There is no reason to assume that $\Riem^+ (W^d) \simeq \Riem^+ (S^d)$. On the other hand, there are currently no known techniques to disprove such a statement.
\item We conjecture that $\Riem^+ (\cp^2 \times \cp^2) \simeq \Riem^+ (W^8)$ as well; however our method misses this single case. 
\end{enumerate}
\end{rem}

Theorems \ref{MainThm:spin} and \ref{MainThm:nonspin} are special cases of the following more general

\begin{Mainconjecture}\label{conj:metaconjecture}
Let $M$ and $N$ be two closed $d$-manifolds, $d \geq 5$, with the same normal $2$-type. If both, $M$ and $N$, admit psc metrics, then $\Riem^+ (M) \simeq \Riem^+ (N)$.
\end{Mainconjecture}

Here two manifolds \(M_i\), \(i=1,2\), are said to have the same normal \(2\)-type, if there is a fibration \(\xi:B\rightarrow BO\) such that the classifying maps \(\nu_i:M_i\rightarrow BO\) of the stable normal bundles lift to maps \(\bar{\nu}_i: M_i\rightarrow B\) and both \(\bar{\nu}_i\) are \(2\)-connected.

\begin{rem}
\begin{enumerate}
\item It was observed in \cite[\S 9]{ERWpsc3} that Conjecture \ref{conj:metaconjecture} is implied by the concordance--implies--isotopy conjecture for psc metrics (which, however, is completely open). See Remark \ref{rem:concordance-isotopy} for an alternative explanation.
\item The method of the proof of Theorem \ref{MainThm:spin} also proves Conjecture \ref{conj:metaconjecture} for the normal $2$-type $B\Spin \times BG$ for some finitely presented groups, e.g. $G= \bZ^n$ or $G=F_n$ (free group on $n$ generators). That is, if $M$, $N$ are spin with fundamental group $G$, and both admit psc metrics, then $\Riem^+ (M) \simeq \Riem^+ (N)$. See Theorem \ref{thm:freeandfreeabelian} for a more general statement. 
\item Using the methods of this paper and computations in cobordism theory, one can prove partial results for many other normal $2$-types such as $BO$, $B\SO\times BG$ or $BO\times BG$. We refrain from stating them.
\end{enumerate}
\end{rem}

The proof of Theorem \ref{MainThm:spin} has three major steps. The first step, carried out in \S \ref{sec:cutandpaste}, is a general method to construct homotopy equivalences $\Riem^+ (M) \simeq \Riem^+ (N)$ when $M$ and $N$ are not cobordant. The key result is Theorem \ref{thm:admissiblesplittingleadstohomotopyinvariance}. It asserts that if a $d$-dimensional spin psc manifold $M$ admits a decomposition of a particular type (the technical name we chose for that is ``admissible splitting''), then for each simply connected $N$ in the cobordism class of $M$, we have $\Riem^+ (N) \simeq \Riem^+ (S^d)$. The second step is to show that total spaces of $\hp^2$-bundles with structure group $P(\Sp(2) \times \Sp(1))$ do admit admissible splittings. This follows from an extension of the Gromov--Lawson--Chernysh theorem that was proven by Korda\ss{} \cite{Kordass} and is done in \S \ref{sec:gromovlawsonchernysh}. 
The third step is to show that each simply connected spin manifold with psc metric is cobordant to the total space of an $\hp^2$-bundle with structure group $P(\Sp(2) \times \Sp(1))$. This is derived, by homotopy-theoretic methods, from the proof of Stolz' theorem \cite{Stolz} in \S \ref{sec:computationspin}. 

The proof of Theorem \ref{MainThm:nonspin} is somewhat more convoluted. The first step applies verbatim, and the geometric argument of \S \ref{sec:gromovlawsonchernysh} also apply to bundles whose fibers are complex projective spaces of (complex) dimension at least $3$. However, these examples do not suffice and we need a more explicit understanding of the structure of $\Omega_*^\SO$. Therefore, some extra arguments and a careful treatment of a particular $5$-manifold are necessary. Both is done in \S \ref{sec:geometry-for-SO}. We have not been able to do the extra work which would be needed for the $8$-dimensional case, which is why Theorem \ref{MainThm:nonspin} remains incomplete for $d=8$. 

\subsection*{Acknowledgements}
We would like to thank Jan-Bernhard Kordass on discussions on generalizations of the Gromov--Lawson--Chernysh surgery construction.

We also want to thank Michael Joachim, Christoph B\"ohm and Burkhard Wilking for helpful conversations about various arguments in this paper, and Georg Frenck for his careful reading of a first draft. 

\section{A cut-and-paste construction}\label{sec:cutandpaste}

\subsection{The cobordism theorem}

We have to review the role of cobordism theory for questions about positive scalar curvature and begin with the surgery theorem of Gromov--Lawson, Schoen--Yau and Chernysh. Let us recall that a \emph{torpedo metric $g_\torp^k$ of radius $\delta$} is an $O(k)$-invariant Riemannian metric on $\bR^k$ such that
\begin{enumerate}
\item in polar coordinates, $g_\torp^k$ is of the form $dr^2 \oplus f(r)^2 g_{S^{k-1}}$, where $0 \leq f \leq \delta$ is a smooth function with $f(0)=0$, $f'(0)=1$, $0 \leq f'\leq 1$, $f'' \leq 0$ and $f(t)=\delta$ for $t$ in a neighborhood of $[1,\infty)$. 
\item $\scal (g_\torp^k) \geq \scal(\delta^2 g_{S^{k-1}})= \frac{1}{\delta^2} (k-1)(k-2)$.
\end{enumerate}
Let $N$ be a closed manifold and let $\varphi:N \times \bR^k \to M$ be a codimension $0$ embedding. Fix a Riemannian metric $g_N$ on $N$, let $\delta$ be such that $\scal(g_N) + \frac{1}{\delta^2} (k-1)(k-2) >0 $ and fix a torpedo metric $g_\torp^k$ of radius $\delta>0$. We denote by $\Riem^+ (M, \varphi) \subset \Riem^+ (M)$ the subspace of those metrics which are of the form $g_N \oplus g_\torp^k$ on $N \times D^k$. 

\begin{thm}[Surgery theorem]\cite{Chernysh}\label{thm:surgerytheorem}
As long as $k \geq 3$, the inclusion 
\[
\Riem^+ (M,\varphi) \to \Riem^+ (M)
\]
is a weak homotopy equivalence.
\end{thm}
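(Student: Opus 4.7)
The plan is to produce a deformation of $\Riem^+(M)$ onto $\Riem^+(M, \varphi)$. Since $\Riem^+(M,\varphi) \subset \Riem^+(M)$, what is needed is a continuous procedure that turns an arbitrary psc metric $g$ on $M$ into one with the prescribed torpedo form on $\varphi(N \times D^k)$, realized through a path of psc metrics, and which is the identity (up to suitable reparameterization) on the subspace where $g$ already has this form.

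The decisive geometric input is the Gromov--Lawson bending construction. After choosing a tubular identification that depends continuously on $g$, the restricted metric on $\varphi(N \times \bR^k)$ can be compared with a model product $g_N' \oplus (dr^2 + r^2 g_{S^{k-1}})$, where $g_N'$ is the induced metric on the central slice. Classically one considers hypersurfaces in $\varphi(N \times \bR^k) \times \bR$ given as graphs of concave, non-decreasing profile functions $r = f(t)$: if the profile has a sufficiently steep initial segment and a flat tail, the induced metric is of the warped form $g_N' \oplus (dt^2 + f(t)^2 g_{S^{k-1}})$, and a direct computation shows its scalar curvature remains positive. The decisive inequality is that the round factor contributes $\frac{(k-1)(k-2)}{f(t)^2}$, which dominates the negative terms produced by the warping precisely when $k \geq 3$; this is where the codimension hypothesis enters, exactly as in \cite{GL1}.

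To upgrade this from the existence of a single psc metric to a weak homotopy equivalence, I would follow Chernysh and interpolate through a larger auxiliary space of ``partially bent'' psc metrics parameterized by admissible profile curves $f$. Concretely, one considers pairs $(g, f)$ with $g$ a psc metric on $M$ and $f$ an admissible concave profile, and forms a new psc metric by the bending construction; the straight profile $f(t) = t$ recovers $\Riem^+(M)$, while the fully bent profile recovers $\Riem^+(M,\varphi)$. Both projections are then shown to be weak equivalences: the former by a contractibility argument (the space of admissible profiles is convex and hence contractible, so one can straighten any profile to $f(t)=t$ through psc metrics), and the latter by a similar concavity argument together with the observation that the torpedo endpoint is preserved throughout. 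Since both inclusions into the total space are weak equivalences, so is their composite.

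The main obstacle will be arranging, parametrically over a compact family of ambient metrics, that the tubular neighborhood and the comparison standard metric depend continuously on $g$: the embedding $\varphi$ is fixed, but the Riemannian geometry it induces is not, so one must carefully decouple the topological data from the Riemannian data before the bending construction can be applied uniformly, using cut-off and smoothing procedures that maintain positivity of scalar curvature in families. Once this foundational step is set up correctly, the parametric Gromov--Lawson bending and the contractibility of the profile function space combine to give the claimed weak equivalence without further serious difficulty.
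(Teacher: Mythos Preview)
The paper does not prove this theorem; it is quoted from \cite{Chernysh}, with the remark that a detailed account appears in \cite{EbFrenck}. There is therefore no ``paper's own proof'' to compare against, only the cited sources.

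Your sketch is a fair high-level summary of the Chernysh strategy as presented in those sources: normalize the metric near $N$, apply a parametric Gromov--Lawson bending governed by a profile curve, and use convexity of the space of admissible profiles to interpolate. Two points would need tightening if you wrote this out. First, an arbitrary psc metric on the tube is not of the form $g_N' \oplus (dr^2 + r^2 g_{S^{k-1}})$; a preliminary continuous deformation through psc metrics to Gaussian normal form near $N$ is required, and this already uses $k\geq 3$. Second, the bending produces only an approximate warped product with a $g$-dependent base metric $g_N'$, so a further isotopy is needed to reach the fixed standard $g_N \oplus g_\torp^k$; arranging this uniformly over compact families is where most of the technical work in \cite{Chernysh} and \cite{EbFrenck} is concentrated, and your last paragraph correctly identifies this as the main obstacle.
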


Gromov--Lawson \cite[Theorem A]{GL1} and Schoen--Yau \cite[Theorem 3]{SchoenYau} previously proved that $\Riem^+ (M,\varphi) \neq \emptyset$ if $\Riem^+ (M) \neq \emptyset$. A detailed account of Chernysh's proof appears also in \cite{EbFrenck}. 

To use this result effectively, we have to use structured cobordism. For a manifold $M$, let $\nu_M$ be its normal bundle (a stable vector bundle), and let $\gamma \to BO$ be the universal stable vector bundle. Let $\xi:  B \to BO$ be a fibration. By a $\xi$-manifold, we mean a manifold $M$, together with a map $\ell: M \to B$ and an isomorphism $(\xi \circ \ell)^* \gamma \cong \nu_M$ of stable vector bundles. The notion of $\xi$-cobordisms is defined similarly, and $\Omega_d^\xi$ is the cobordism group of $d$-dimensional $\xi$-manifolds. We say that a $\xi$-manifold $(M,\ell)$ is \emph{relatively $2$-connected} if $\ell$ is $2$-connected. A neccesary condition for the existence of a relatively $2$-connected $M$ is that $B$ is of type $(F_2)$ \cite{WallFin}, and if $d \geq 4$, this is also sufficient: under those assumptions, each class $x \in \Omega_d^\xi$ contains a relatively $2$-connected representative. 

The reader should have the following examples in mind; in each case, $B$ is of type $(F_2)$.
\begin{examples}
\begin{enumerate}
\item $\xi: B \Spin \to BO$. In that case, a $\xi$-manifold is the same as a spin manifold, and it is relatively $2$-connected if and only if it is simply connected. The cobordism groups are $\Omega_d^\Spin = \pi_d (\MSpin)$. 
\item $B \Spin \times BG \to B  \Spin \to BO$, where $G$ is a finitely presented group. A $\xi$-manifold is the same as a spin manifold equipped with a map $f: M \to BG$. The relevant cobordism group is $\Omega_d^\Spin (BG)= \pi_d (\MSpin \wedge BG_+)$, and a $\xi$-manifold is relatively $2$-connected if and only if it is $0$-connected and if $f$ induces an isomorphism on fundamental groups. 
\item $B = B \SO \to BO$. In that case, the relevant cobordism group is oriented cobordism $\Omega_d^{\SO}=\pi_d (\MSO)$. An oriented manifold is relatively $2$-connected if and only if it is simply connected and does not admit a spin structure. 
\item $B = BO$. The relevant cobordism group is unoriented cobordism $\Omega_d^{O}= \pi_d (\mathrm{MO})$. A manifold is relatively $2$-connected if it is $0$-connected, not orientable, has fundamental group $\bZ/2$ and the universal cover does not admit a spin structure. 
\end{enumerate}
\end{examples}

\begin{thm}[Cobordism theorem]\label{cobordismtheorem}
Let $(M_i,\ell_i)$ be two $\xi$-manifolds of dimension $d \geq 5$ and assume that $[M_0,\ell_0]=[M_1,\ell_1] \in \Omega_d^\xi$.
\begin{enumerate}
\item If $\Riem^+ (M_0)\neq \emptyset$ and $(M_1, \ell_1)$ is relatively $2$-connected, then $\Riem^+ (M_1) \neq \emptyset$. 
\item If both $(M_0,\ell_0)$ and $(M_1,\ell_1)$ are relatively $2$-connected, there is a weak homotopy equivalence
\[
 \Riem^+ (M_0) \simeq \Riem^+ (M_1). 
\]
\end{enumerate}
\end{thm}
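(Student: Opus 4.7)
The plan is to deduce both parts from Theorem \ref{thm:surgerytheorem} via a handle-trading argument applied to a $\xi$-cobordism $(W, \ell_W)$ from $(M_0, \ell_0)$ to $(M_1, \ell_1)$; such a $W$ exists because $[M_0, \ell_0] = [M_1, \ell_1]$ in $\Omega_d^\xi$.

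First I would arrange that $W$ admits a handle decomposition rel $M_0$ with controlled handle indices. By performing interior $\xi$-surgery on $W$ in dimensions $\leq 2$, I make $\ell_W: W \to B$ itself $2$-connected. Combined with the relative $2$-connectedness of the boundary components, this makes the pair $(W, M_i)$ relatively $2$-connected for each $i$ for which $\ell_i$ is assumed $2$-connected. Classical Morse-theoretic handle trading (Wall, Kreck) then produces a handle decomposition of $W$ rel $M_0$ whose handles all have index $k$ with $3 \leq k \leq d-2$ in case (2): the lower bound uses the $2$-connectedness of $(W, M_0)$, and the upper bound that of $(W, M_1)$ read from the dual decomposition. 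In case (1) only the upper bound is needed, and so only $(W, M_1)$ need be relatively $2$-connected.

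Second, I would factor $W$ into an ordered sequence of elementary cobordisms $M_0 = N_0, N_1, \ldots, N_r = M_1$, each $N_{i+1}$ obtained from $N_i$ by a single surgery of some index $p$ along an embedding $\varphi : S^{p-1} \times \bR^{d-p+1} \to N_i$. In case (2), where $p \in [3, d-2]$, both $\varphi$ and the dual embedding $\varphi' : S^{d-p} \times \bR^{p} \to N_{i+1}$ have codimension $\geq 3$, so Theorem \ref{thm:surgerytheorem} yields weak equivalences
\[
\Riem^+ (N_i) \simeq \Riem^+ (N_i, \varphi) \simeq \Riem^+ (N_{i+1}, \varphi') \simeq \Riem^+ (N_{i+1}),
\]
where the middle equivalence comes from the fact that $N_i$ and $N_{i+1}$ agree off the surgery region and the two standard-form prescriptions are interchanged up to a contractible choice of torpedo profile. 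Composing across all $i$ proves (2). For (1), all $p \leq d-2$ suffice: the existence content of Theorem \ref{thm:surgerytheorem} (the Gromov--Lawson/Chernysh construction) propagates a psc metric from $N_i$ to $N_{i+1}$ whenever the attaching sphere has codimension $\geq 3$, while any surviving $0$-handles are accommodated by giving the new $S^d$ component the round metric.

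The step I expect to be the hardest is the first: carrying out the handle trading in the $\xi$-category. The classical ingredients are all in place, but one must check at each stage that the $\xi$-structure extends across each interior surgery on $W$ and that the two relative $2$-connectedness conditions translate precisely into the claimed index cutoffs $3$ and $d-2$. Once the handle decomposition is in range, the rest is a mechanical iteration of Theorem \ref{thm:surgerytheorem}.
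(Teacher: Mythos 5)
The paper does not prove Theorem \ref{cobordismtheorem}; it records it as a consequence of Theorem \ref{thm:surgerytheorem} and handle cancellation, citing \cite{HebestreitJoachim}, Theorem 2.1.1 for part (1) and \cite{EbFrenck}, Theorem 1.5 for part (2). Your sketch follows the standard route those references take, so the overall strategy is sound, but one step is not correct as written.

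Making $\ell_W \colon W \to B$ merely $2$-connected by interior $\xi$-surgery does \emph{not} imply that $(W,M_i)$ is $2$-connected even if $\ell_{M_i}$ is: when $\pi_2(M_i)\to\pi_2(B)$ and $\pi_2(W)\to\pi_2(B)$ are both only epimorphisms, nothing forces $\pi_2(M_i)\to\pi_2(W)$ to be surjective. One must additionally surger away the kernel of $\pi_2(W)\to\pi_2(B)$ (possible by framed surgery on interior $2$-spheres, since $\dim W = d+1 \geq 6$); once $\ell_W$ is a $\pi_2$-\emph{isomorphism}, the long-exact-sequence chase does give $\pi_j(W,M_i)=0$ for $j\leq 2$, which is the input the Smale/Wall handle-trading argument needs. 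This is exactly the extra care the paper flags as making ``the general case'' for (2) require ``slightly more elaborate arguments'' than the $B\Spin$ case. By contrast, your treatment of the middle equivalence $\Riem^+(N_i,\varphi)\simeq\Riem^+(N_{i+1},\varphi')$ is essentially fine, and in fact simpler than you indicate: choosing both torpedo radii to be $1$ makes the two normal forms restrict to the same product round metric on the common boundary $S^{p-1}\times S^{d-p}$, so both spaces are canonically homeomorphic to $\Riem^+\bigl(N_i\setminus\im\varphi\bigr)_{g_{S^{p-1}}\oplus g_{S^{d-p}}}$; no appeal to ``contractible choice of profile'' is needed.
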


This is a consequence of Theorem \ref{thm:surgerytheorem} and handle cancellation theory. The first part is due to Gromov and Lawson in the cases $B= B \Spin$ and $B= B\SO$ \cite{GL1}; the general version appears in \cite{HebestreitJoachim}, Theorem 2.1.1. The second part in the case $B= B \Spin$ was done in \cite[Corollary 4.2]{WalshC}; the general case requires slightly more elaborate arguments and is carried out in \cite[Theorem 1.5]{EbFrenck}. 

\subsection{Admissible splittings}

We now head towards a method which can be used to compare $\Riem^+ (M_0)$ and $\Riem^+ (M_1)$ when $M_0$ and $M_1$ are not cobordant. For the formulation, we shall make use of the notions of right stable and left stable psc metrics which were defined in \cite[Definition 1.2.1]{ERWpsc2}. To recall the definition, we have to introduce some notation and terminology. If $W$ is a compact manifold with (collared) boundary $M$, we denote by $\Riem^+ (W)$ the space of all psc metrics on $W$ which are of product form near $M$. There is a restriction map $\res:\Riem^+ (W)\to \Riem^+ (M)$ that takes the boundary metric, and for $h \in \Riem^+ (M)$, we let $\Riem^+ (W)_h:= \res^{-1}(h)$. 

If $W:M_0 \leadsto M_1$ is a cobordism and $h_i \in \Riem^+ (M_i)$, we let $\Riem^+ (W)_{h_0,h_1}:= \Riem^+ (W)_{h_0\coprod h_1}$. For each $g\in\Riem^+ (W)_{h_0,h_1}$, each cobordism $V: M_1 \leadsto M_2$ and each $h_2 \in \Riem^+ (M_2)$, there is a gluing map 
\[
\mu (g,\_): \Riem^+ (V)_{h_1,h_2} \to \Riem^+ (W \cup V)_{h_0,h_2}\quad \mu(g,h):=g\cup h,
\]
and we say that $g$ is \emph{right stable} if $\mu (g,\_)$ is a weak equivalence for all such $V$ and $h_2$. Left stability is formulated with the analogous gluing map $\mu (\_,g)$ defined for cobordisms with outgoing boundary $M_0$. A metric is \emph{stable} if it is both, left and right stable. We remark that $g$ is right stable if and only if it is left stable when considered as a metric on the reversed cobordism. As an example, consider $N \times D^k : \emptyset \leadsto N \times S^{k-1}$ for $k \geq 3$. If $g_N$ and $g_\torp^k$ are as in Theorem \ref{thm:surgerytheorem}, that result might be restated by saying that $g_N \oplus g_\torp^k\in \Riem^+(N\times D^k)_{g_N\oplus g_0}$ is right stable.

\begin{defn}\label{defn:splittingcondition-version1}
Let $M$ be a closed $d$-manifold. An \emph{admissible splitting} of $M$ is a decomposition of $M$ as the concatenation of two cobordisms
\[
\emptyset \stackrel{M_0}{\leadsto} N \stackrel{M_1}{\leadsto} \emptyset,
\]
such that the following conditions hold:
\begin{enumerate}
\item there are psc metrics $h_i \in \Riem^+ (N)$ and $g_i \in \Riem^+ (M_i)_{h_i}$, such that $h_0$ and $h_1$ lie in the same path component of $\Riem^+ (N)$,
\item $g_0$ is right stable and $g_1$ is left stable,
\item the inclusion maps $N \to M_i$ are $2$-connected. 
\end{enumerate}
\end{defn}
It is often more convenient to write such a splitting as $M= M_0 \cup_N M_1$. We remark that if $M$ has an admissible splitting, it automatically admits a psc metric (this is because we can find a psc metric in $\Riem^+ (N \times [0,1])_{h_0,h_1}$ if $h_0$ and $h_1$ lie in the same path component). 

\begin{examples}\label{examples:admissiblesplittings}
\begin{enumerate}
\item The splitting $S^d = D^d \cup_{S^{d-1}} D^d$ is admissible when $d \geq 3$: take $g_0=g_1 = g_\torp^{d}$. 
\item The standard splitting $S^d = (S^{k-1} \times D^{d-k+1}) \cup_{S^{k-1} \times S^{d-k}} (D^{k} \times S^{d-k})$ is admissible if $3 \leq k \leq d-2$: take $g_0= g_{S^{k-1}} \oplus g_\torp^{d-k+1}$ and $g_1 = g_\torp^k\oplus  g_{S^{d-k}}$. 
\item Let $P$ be a $d$-manifold, $d \geq 6$, such that $\partial P \to P$ is $2$-connected. Then by \cite[Theorem D]{ERWpsc2}, there is $g_{\partial P} \in \Riem^+ (\partial P)$ and a right stable $g_P \in \Riem^+ (P)_{g_{\partial P}}$. The decomposition $dP= P \cup_{\partial P} P^{op}$ of the double of $P$ is admissible. 
\end{enumerate}
\end{examples}

All these examples are nullbordant; and in fact the key point of this work is to construct admissible splittings of manifolds which are not nullbordant. Let us first see how admissible splittings are helpful for proving our main theorems. 

\begin{thm}\label{thm:admissiblesplittingleadstohomotopyinvariance}
Let $d \geq 5$ and let $\xi: B \to BO$ be a fibration. Let $x \in \Omega^\xi_d$ be a cobordism class which contains a representative $M$ that has an admissible splitting. Let $K$ and $L$ be relatively $2$-connected $\xi$-manifolds such that 
\[
 [L]= [K] + x \in \Omega_d^\xi.
\]
Then there is a weak equivalence
\[
\Riem^+ (L) \simeq \Riem^+ (K). 
\]
\end{thm}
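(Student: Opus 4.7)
The plan is to bridge $\Riem^+(L)$ and $\Riem^+(K)$ by exhibiting them both as weakly equivalent to $\Riem^+(K \# DM_0)$, where $DM_0 := M_0 \cup_N M_0^{op}$ is the double of $M_0$. The double is nullbordant as a $\xi$-manifold (since $DM_0 = \partial(M_0 \times [0,1])$), so $[K \# DM_0] = [K]$, and the cobordism theorem alone will handle the passage from $K \# DM_0$ to $K$. The real work is to compare $\Riem^+(K \# M)$ and $\Riem^+(K \# DM_0)$, and here I would exploit the admissible splitting of $M$ together with the canonically induced admissible splitting of $DM_0$ from Example \ref{examples:admissiblesplittings}(3), which equips $M_0$ with $g_0$ (right stable) and $M_0^{op}$ with $g_0$ (left stable on the reversed cobordism).

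First, I would reduce to the case $L = K \# M$. Since $[L] = [K \# M]$ in $\Omega_d^\xi$ and $L$ is relatively $2$-connected, performing codimension-$\geq 3$ surgeries on $K \# M$ to make it relatively $2$-connected (permissible as $d - 2 \geq 3$, and preserving $\Riem^+$ up to weak equivalence by Theorem \ref{thm:surgerytheorem}) together with Theorem \ref{cobordismtheorem}(2) delivers $\Riem^+(L) \simeq \Riem^+(K \# M)$. A parallel reduction applies to $K \# DM_0$.

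The heart of the argument consists in two parallel uses of stability. For $K \# M$, choose the connect-sum disk in the interior of $M_0 \subset M$, obtaining the concatenation
\[
 K \# M = \bigl(K^\circ \cup_{S^{d-1}} M_0^\circ\bigr) \cup_N M_1,
\]
which ends with the cobordism $M_1 \colon N \leadsto \emptyset$. Left stability of $g_1$ identifies
\[
 \Riem^+(K \# M) \simeq \Riem^+\bigl(K^\circ \cup_{S^{d-1}} M_0^\circ\bigr)_{h_1}.
\]
For $K \# DM_0$, choose the disk in the interior of $M_0^{op} \subset DM_0$, obtaining
\[
 K \# DM_0 = M_0 \cup_N \bigl((M_0^{op})^\circ \cup_{S^{d-1}} K^\circ\bigr),
\]
which begins with $M_0 \colon \emptyset \leadsto N$. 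Right stability of $g_0$ identifies
\[
 \Riem^+(K \# DM_0) \simeq \Riem^+\bigl((M_0^{op})^\circ \cup_{S^{d-1}} K^\circ\bigr)_{h_0}.
\]
The two underlying manifolds with boundary $N$ are diffeomorphic (both are obtained by gluing $K^\circ$ to $M_0^\circ$ along $S^{d-1}$), and since $h_0$ and $h_1$ lie in the same path component of $\Riem^+(N)$, the isotopy invariance of the restriction fibration $\res \colon \Riem^+(X) \to \Riem^+(N)$ yields a weak equivalence between the two spaces. Combined with $\Riem^+(K \# DM_0) \simeq \Riem^+(K)$ coming from $[DM_0]=0$ and Theorem \ref{cobordismtheorem}(2), the whole chain produces the desired weak equivalence $\Riem^+(L) \simeq \Riem^+(K)$.

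The main obstacle I anticipate is the careful bookkeeping in the $2$-connectification steps: one must check that the necessary surgeries on $K \# M$ and $K \# DM_0$ can be chosen to respect the $\xi$-structure, have codimension $\geq 3$, and do not disturb the chosen disks or the stable metrics $g_0, g_1$ and their collars. A secondary concern is formulating the isotopy-invariance step precisely (the standard fact that the fibers of $\res$ over points in a single path component of $\Riem^+(N)$ are weakly equivalent) in a way compatible with the stability framework and the collar conventions already in use.
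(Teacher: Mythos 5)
Your core argument---gluing in $g_1$ on one side and $g_0$ on the other, then comparing the two resulting spaces of metrics on the same manifold with boundary $N$ via the restriction fibration since $h_0,h_1$ are isotopic---is correct and is essentially the same stability argument used in the paper. The gap is in the outer reduction steps, where you claim $\Riem^+(L)\simeq\Riem^+(K\#M)$ and $\Riem^+(K\#DM_0)\simeq\Riem^+(K)$.

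The cobordism theorem \ref{cobordismtheorem}(2) applies only when \emph{both} manifolds are relatively $2$-connected, and $K\#M$, $K\#DM_0$ need not be (for instance $M_1$ may contribute to $\pi_1$ or $\pi_2$). You propose to fix this by performing surgeries on $K\#M$ to $2$-connectify, noting these have codimension $\geq 3$. But the surgeries needed are of \emph{index} $\leq 2$, i.e.\ on embedded $S^{i-1}\times D^{d-i+1}$ with $i\leq 2$; while $d-i+1\geq 3$ so Theorem~\ref{thm:surgerytheorem} applies on that side, the \emph{dual} surgery on $S^{d-i}\times D^i$ has tube dimension $i\leq 2<3$, so the surgery theorem does \emph{not} apply to the result. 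Index-$\leq 2$ surgeries therefore do not preserve the weak homotopy type of $\Riem^+$; if they did, the entire subject would collapse (one could then relate $\Riem^+$ of manifolds with wildly different $\pi_1$). So the chain $\Riem^+(L)\simeq\Riem^+(K\#M)$ is not justified, and the same problem occurs at the other end.

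The paper avoids this by performing the index-$\leq 2$ surgeries not on the closed manifold $K\#M$ but on the \emph{bounded} piece $K\sqcup M_0$ (with boundary $N$), producing a relatively $2$-connected $M_0''$. It then forms the closed manifolds $M'=M_0''\cup_N M_1$ and $M''=M_0''\cup_N M_0^{op}$, checks explicitly that both are relatively $2$-connected (using $2$-connectivity of $N\hookrightarrow M_i$ and excision), and only then applies the cobordism theorem to $\Riem^+(M')\simeq\Riem^+(L)$ and $\Riem^+(M'')\simeq\Riem^+(K)$. The stability step relates $\Riem^+(M')$ and $\Riem^+(M'')$ through $\Riem^+(M_0'')_h$, and no claim is ever made that $\Riem^+$ is unchanged under the low-index surgeries. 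Your proof can be repaired by moving the $2$-connectification to the bounded piece in exactly this way; the stability and isotopy-invariance steps you wrote then go through unchanged with $M_0''$ in place of $K^\circ\cup_{S^{d-1}}M_0^\circ$.
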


It is obvious that the set of cobordism classes $x \in \Omega^\xi_d$ which contain a representative with an admissible splitting is an additive subgroup. For the proof of Theorem \ref{thm:admissiblesplittingleadstohomotopyinvariance}, we need a lemma. 

\begin{lem}\label{lem:fibrationtrick}
Let $M : \emptyset \leadsto N$ be a cobordism, let $g_0 \in \Riem^+ (M)$ be right stable and put $h_0:=g_0|_{N}$. Let $h_1 \in \Riem^+ (N)$ be any other psc metric in the path component of $h_0$. Then there is a right stable $g_1 \in \Riem^+ (M)$ with $g_1|_N=h_1$.  
\end{lem}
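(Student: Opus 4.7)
The plan is to construct $g_1$ by gluing onto $g_0$ a psc cylinder built from a path from $h_0$ to $h_1$, and then to deduce right stability from the factorization of the gluing map through $g_0$. First, given a path $t \mapsto h_t$ in $\Riem^+(N)$ from $h_0$ to $h_1$, the standard stretching construction yields, for $T$ sufficiently large, a psc metric $c \in \Riem^+(N \times [0,T])_{h_0,h_1}$ of product form near each boundary; concretely $ds^2 + h_{\tau(s)}$ for a reparameterization $\tau:[0,T]\to[0,1]$ which is constant near the endpoints (for $T$ large, the time derivatives of $h_{\tau(s)}$ are small enough that the scalar curvature is dominated by $\scal(h_{\tau(s)}) > 0$). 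Setting $g_1 := g_0 \cup_N c$ and using the canonical collar-absorption diffeomorphism $M \cup_N (N \times [0,T]) \cong M$, one obtains the desired $g_1 \in \Riem^+(M)$ with $g_1|_N = h_1$.

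To show $g_1$ is right stable, fix any cobordism $V:N \leadsto M_2$ and $h_2 \in \Riem^+(M_2)$. By associativity of gluing, the map $\mu(g_1,\_): \Riem^+(V)_{h_1,h_2} \to \Riem^+(M \cup V)_{h_2}$ factors as
\[
\Riem^+(V)_{h_1,h_2} \xrightarrow{\mu(c,\_)} \Riem^+((N \times [0,T]) \cup V)_{h_0,h_2} \xrightarrow{\mu(g_0,\_)} \Riem^+(M \cup V)_{h_2}.
\]
The second map is a weak equivalence by right stability of $g_0$ applied to the cobordism $(N \times [0,T]) \cup V : N \leadsto M_2$. It therefore suffices to show that $\mu(c,\_)$ is a weak equivalence.

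For this, let $c^{op}$ be the cylinder built from the reversed path $t \mapsto h_{1-t}$. The concatenation $c^{op} \cup c \in \Riem^+(N \times [0,2T])_{h_1,h_1}$ corresponds to the loop $h_1 \to h_0 \to h_1$ in $\Riem^+(N)$, which is the concatenation of a path with its reverse and hence null-homotopic rel endpoints. A parametric version of the stretching construction promotes this null-homotopy to an isotopy from $c^{op} \cup c$ to the product cylinder $ds^2 + h_1$ in $\Riem^+(N \times [0,2T])_{h_1,h_1}$; since gluing a product cylinder agrees with the identity up to collar absorption, this shows $\mu(c^{op},\_) \circ \mu(c,\_)$ is a weak equivalence. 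The symmetric argument applied to $c \cup c^{op}$ based at $h_0$ shows $\mu(c,\_) \circ \mu(c^{op},\_)$ is a weak equivalence as well, and hence $\mu(c,\_)$ is a weak equivalence. The main technical obstacle in the whole argument is precisely this parametric stretching step: one needs a single $T$ that ensures positivity of the scalar curvature uniformly across the parameter of the null-homotopy, which follows from compactness of the parameter space together with standard scalar curvature estimates for slowly varying paths of metrics.
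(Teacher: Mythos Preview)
Your proof is correct, but it takes a genuinely different route from the paper's. The paper's argument is a two-line appeal to black boxes: the restriction map $\res:\Riem^+(M)\to\Riem^+(N)$ is a Serre fibration (by \cite[Theorem 1.1]{EbFrenck}), so the path $h_t$ lifts to a path $g_t$ in $\Riem^+(M)$ starting at $g_0$, and then \cite[Lemma 2.3.4]{ERWpsc3} says right stability depends only on the path component, so $g_1$ is right stable. You instead avoid both citations and build $g_1$ by hand as $g_0\cup c$ for a stretched cylinder $c$, then verify right stability directly via the factorization $\mu(g_1,\_)=\mu(g_0,\_)\circ\mu(c,\_)$ and the $c^{op}$-trick showing $\mu(c,\_)$ is a weak equivalence. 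Your approach is more self-contained and makes explicit what is really going on (indeed, the cylinder-gluing idea underlies the proof of Lemma 2.3.4 in \cite{ERWpsc3}), at the cost of having to manage the parametric stretching carefully; the paper's approach is much shorter but relies on the reader accepting two nontrivial external results. One small point worth tightening: at $s=0$ your stretched family does not literally equal $c^{op}\cup c$ unless you align the stretching parameters, but any two stretchings of the same path are isotopic rel boundary by interpolating the reparametrization, so this is harmless.
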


\begin{proof}
The restriction map $\res:\Riem^+ (M) \to \Riem^+ (N)$ is a Serre fibration by \cite[Theorem 1.1]{EbFrenck} (which is an improvement of the main result of \cite{Chernysh2}). Therefore, we can lift any path $t \mapsto h_t$ connecting $h_0$ and $h_1$ in $\Riem^+ (N)$ to a path $t \mapsto g_t$ in $\Riem^+ (M)$, starting at $g_0$. By \cite[Lemma 2.3.4]{ERWpsc3}, $g_1$ is right stable.
\end{proof}

\begin{proof}[Proof of Theorem \ref{thm:admissiblesplittingleadstohomotopyinvariance}]
Because of Lemma \ref{lem:fibrationtrick}, we may assume that $h_0=h_1=: h$. 

Without loss of generality, $B$ is of type $(F_2)$; otherwise, the Proposition holds vacuously. Let $M=M_0 \cup_N M_1$ be an admissible splitting of $M$, with psc metrics $g_0 \in \Riem^+ (M_0)_h$ and $g_1\in \Riem^+ (M_1)_h$ as in Definition \ref{defn:splittingcondition-version1}. Denote by \(M_0'\) the disjoint union of \(K\) with \(M_0\). 
Since $B$ is of type $(F_2)$, we can turn $M_0'$ by a sequence of $\xi$-surgeries of index $\leq 2$ in its interior to a relatively $2$-connected $\xi$-manifold $M''_0$ with boundary $N$. The $\xi$-manifold $M':=M''_0 \cup_N M_1$ lies in the cobordism class $x+[K]$, and is therefore $\xi$-cobordant to $L$. 

We show below that $M'$ is relatively $2$-connected. Taking this for granted, Theorem \ref{cobordismtheorem} shows that there is a weak equivalence
\begin{equation}\label{eqn:proofprop3}
\Riem^+ (L) \simeq \Riem^+ (M').
\end{equation}
Let further $M'':= M''_0 \cup_N M_0^{op}$. Since $g_1$ and $g_0^{op}$ are left stable, there are homotopy equivalences
\begin{equation}\label{eqn:proofprop1}
\Riem^+ (M') \stackrel{\mu(\_,g_1)}{\lla} \Riem^+(M''_0)_{h} \stackrel{\mu (\_, g_0^{op})}{\lra} \Riem^+ (M''),
\end{equation}
given by gluing in $g_1$ and $g_0^{op}$ respectively.
On the other hand
\[
[M'']= [K\#(M_0\cup_N M_0^{op})] = [K] \in \Omega_d^\xi;
\]
the second equality holds since the double of $M_0$ is nullbordant. 

We show below that $M''$ is relatively $2$-connected. Taking this for granted, Theorem \ref{cobordismtheorem} shows that there is a weak equivalence
\begin{equation}\label{eqn:proofprop2}
\Riem^+ (M'') \simeq \Riem^+ (K), 
\end{equation}
and combining \eqref{eqn:proofprop3}, \eqref{eqn:proofprop1} and \eqref{eqn:proofprop2} will finish the proof. 

It remains to be shown that $M'$ and $M''$ are relatively $2$-connected. Consider the commutative diagram
\[
\xymatrix{
M''_0 \ar[r]^{a} \ar[d]^{b} & M' \ar[d]^{\ell_{M'}} \\
M'' \ar[r]^{\ell_{M''}} & B. 
}
\]
The map $\ell_{M''_0}=\ell_{M'} \circ a = \ell_{M''} \circ b$ is $2$-connected. Furthermore, since $N \to M_i$ is $2$-connected, it follows by excision (or general position) that $a$ and $b$ are $2$-connected. Therefore $\ell_{M'}$ and $\ell_{M''}$ are $2$-connected as well.
\end{proof}

\begin{rem}\label{rem:concordance-isotopy}
Let $M$ be an arbitrary closed $d$-manifold with $d \geq 5$. Using Morse theory, we can write $M$ as composition $\emptyset \stackrel{M_0}{\leadsto} N \stackrel{M_1}{\leadsto} \emptyset$, where both inclusions $N \to M_i$ are $2$-connected. Using \cite[Theorem D]{ERWpsc2}, there are $h_0,h_1 \in \Riem^+ (N)$, a right stable $g_0 \in \Riem^+ (M_0)_{h_0}$ and a left stable $g_1 \in \Riem^+ (M_1)_{h_1}$. If $M$ admits a psc metric, the metrics $h_0,h_1$ are concordant. If we could infer from that information that $h_0$ and $h_1$ are isotopic (i.e. in the same path component), we would get an admissible splitting. 

In other words, an affirmative solution of the concordance--implies--isotopy conjecture (which is wide open of course) would imply immediately the truth of Conjecture \ref{conj:metaconjecture}. 

On the other hand, this observation means that when constructing admissible splittings, the key point is always to ensure that $h_0$ and $h_1$ are in the same path component.
\end{rem}

\section{Admissible splittings of projective bundles}\label{sec:gromovlawsonchernysh}

To apply Theorem \ref{thm:admissiblesplittingleadstohomotopyinvariance}, we need a sufficient supply of manifolds which have admissible splittings (note that all of the examples listed in \eqref{examples:admissiblesplittings} are nullbordant). The foremost goal of this section is the following result. 

\begin{thm}\label{thm:splitting-of-projective-bundles}
Let $E  \to B$ be a smooth fibre bundle with fibre $\kp^{m+n-1}$ over a closed manifold $B$ (here $\bK=\bR, \bC$ or $\bH$). Then $E$ has an admissible splitting in each of the following cases.
\begin{enumerate}
\item $\bK=\bR$, $m,n \geq 3$, and the structure group is $P (O(m) \times O(n)) = (O(m) \times O(n))/\{\pm 1\}$. 
\item $\bK=\bC$, $m,n \geq 2$, and the structure group is $U(m) \times U(n)$.
\item $\bK=\bC$, $m+n-1 \geq 3$, and the structure group is $C_2$, acting by complex conjugation.
\item $\bK=\bH$, $m,n \geq 1$, and the structure group is $P(\Sp(m) \times \Sp(n))= (\Sp(m) \times  \Sp(n))/\{\pm 1\}$. 
\end{enumerate}
\end{thm}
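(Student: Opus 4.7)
The plan is to exhibit a structure-group-equivariant decomposition of the fibre $\kp^{m+n-1}$ as the union of two disk bundles, promote it fibrewise to a splitting $E = E_0 \cup_{\mathcal N} E_1$, and then apply Korda\ss's parametrised surgery theorem \cite[Theorem 3.1]{Kordass} to equip $E_0$ and $E_1$ with compatible stable psc metrics.

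First I would use the standard decomposition, with $d = \dim_\bR \bK$,
\[
\kp^{m+n-1} = D_1 \cup_N D_2,
\]
where $D_1 = \{[x:y] : \|x\| \leq \|y\|\}$ is a rank-$dm$ disk bundle over $\kp^{n-1}$, $D_2 = \{[x:y] : \|y\| \leq \|x\|\}$ is a rank-$dn$ disk bundle over $\kp^{m-1}$, and $N = S(\bK^m) \times_{S(\bK)} S(\bK^n)$. This decomposition is invariant under the product of isometry groups of $\bK^m$ and $\bK^n$, hence under the relevant projective quotient in cases (1), (2), (4); in case (3), it is invariant under complex conjugation with respect to any real-rational splitting $\bC^{m+n} = \bC^{m'} \oplus \bC^{n'}$, and since $m+n \geq 4$ we may choose $m', n' \geq 2$. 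Under the hypotheses of each case both disk bundles have real fibre rank $\geq 3$: $m, n \geq 3$ in (1); $2m, 2n \geq 4$ in (2); $\geq 4$ in (3) with the new $(m', n')$; $4m, 4n \geq 4$ in (4). Applying the fibre decomposition fibrewise yields the splitting $E = E_0 \cup_{\mathcal N} E_1$, where each $E_i$ is the disk bundle of a euclidean vector bundle of rank $\geq 3$ over a $\kp^{m-1}$- or $\kp^{n-1}$-subbundle of $E$.

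The psc metrics are then produced by applying Korda\ss's parametrised Gromov--Lawson--Chernysh theorem to $E_0$ and $E_1$ separately: it supplies fibrewise-torpedo psc metrics $g_0 \in \Riem^+(E_0)$ and $g_1 \in \Riem^+(E_1)$ which are respectively right and left stable, precisely because the fibrewise rank condition is met. The construction offers enough freedom to prescribe the boundary restriction as a Riemannian-submersion metric with round spherical fibre of small radius $\delta$. I would take both $h_0 := g_0|_{\mathcal N}$ and $h_1 := g_1|_{\mathcal N}$ to equal (or, via Lemma \ref{lem:fibrationtrick}, to be path-connected to) the canonical ``quotient-of-product'' psc metric on $\mathcal N$, namely the descent of $\delta^2 g_{S(\bK^m)} \oplus \delta^2 g_{S(\bK^n)}$ under the diagonal $S(\bK)$-action. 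This metric is psc by O'Neill and is simultaneously a Riemannian submersion over both $\kp^{m-1}$ and $\kp^{n-1}$, hence compatible with Korda\ss's output from either side. Finally, the 2-connectedness of $\mathcal N \hookrightarrow E_i$ follows since on the fibre it is the boundary inclusion of a disk bundle of rank $\geq 3$, hence 2-connected as a pair, and the five-lemma applied to the long exact sequences of the two fibrations over $B$ upgrades this to global 2-connectedness.

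I expect the main obstacle to be verifying that the symmetric boundary metric is genuinely attainable from Korda\ss's construction on both sides simultaneously: the two pieces of the splitting present $\mathcal N$ as a sphere bundle in two inequivalent ways (fibre $S(\bK^n)$ over $\kp^{m-1}$ versus fibre $S(\bK^m)$ over $\kp^{n-1}$), and one must check that the flexibility inherent in the parametrised torpedo construction suffices to join the two torpedo-boundary metrics to the common symmetric metric within a single path component of $\Riem^+(\mathcal N)$.
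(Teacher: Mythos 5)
Your outline correctly identifies the decomposition of the fibre into two disc bundles, the role of Korda\ss's theorem, and (importantly) the exact place where the real work lies, but it stops short of resolving that point, which is in fact the entire technical content of the paper's argument. The gap is not a minor one you can wave away by appealing to "flexibility in the torpedo construction": Korda\ss's theorem gives no freedom to prescribe the boundary restriction of the connection-torpedo metric. The boundary metric is forced to be of the form $h_0 \oplus_{\omega_0} g_{S^{r_0-1}}$ (a submersion metric over the $\kp^{n-1}$-subbundle), and from the other side it is $h_1 \oplus_{\omega_1} g_{S^{r_1-1}}$ (a submersion metric over the $\kp^{m-1}$-subbundle). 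These are two genuinely different metrics on $\mathcal{N}$, and the theorem only becomes usable once one proves that they lie in the same path component of psc metrics. Invoking Lemma \ref{lem:fibrationtrick} does not help here, since that lemma lets you transport stability along a path \emph{once you already know} such a path exists; it does not produce the path. Proving path-connectedness is precisely what your proposal acknowledges as "the main obstacle" and then leaves unproven.

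There is a second, related, issue: to run the bundle construction over an arbitrary base $B$, it is not enough to join the boundary metrics through arbitrary psc metrics. The path must consist of metrics that are invariant under the structure group $G$, because only then does the path descend to the associated sphere bundle over $B$ while remaining controllable (by scaling) in scalar curvature. This is exactly why the paper's Proposition \ref{prop:janbernhard} demands a path in $\Riem^+(S(V_0))^G$ rather than in $\Riem^+(S(V_0))$. The paper then establishes that condition by doing the hard work: classifying all $PG(n,m)$-invariant metrics on the "middle" $M_{k,n,m}=S^{kn-1}\times_{S^{k-1}}S^{km-1}$ via a representation-theoretic argument (Lemma \ref{lem:classification-of-invariant-metrics}), computing the scalar curvature of a general such metric in closed form via O'Neill's formulas (Lemma \ref{lem:curvature-connection-metric-berger}), and showing the invariant psc locus in parameter space is defined by a convexity inequality, hence contractible (Lemma \ref{lem:contractibility-invariantmetrics}). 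Without some argument playing this role, your proposal does not close. Your remark about choosing a real-rational splitting $\bC^{m+n}=\bC^{m'}\oplus\bC^{n'}$ with $m',n'\geq 2$ for case (3) is fine and matches what the paper does implicitly by reducing to Proposition \ref{prop:projectivebundles-decomposition}.
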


This goal is achieved in \S \ref{subsec:projectivebundles}, where we also introduce more uniform notation (the real case is not needed, but does not require any additional effort). The key to this result is a generalization of the surgery theorem to embeddings with nontrivial normal bundle, which was proven by Korda\ss{} in \cite{Kordass}. Precise statements are the key, and to that end, we need to review some basic material from Riemannian geometry. 

\subsection{Connection metrics}\label{subsec:connectionmetric}

\begin{assumption}\label{ass:data-for-connecttion}
Let $G$ be a Lie group with Lie algebra $\fg$ and let $p: P \to M$ be a smooth $G$-principal bundle, equipped with a connection $\theta $. Assume further that $F$ is a manifold with a left $G$-action. Finally, let $g_M$ be a Riemannian metric on $M$ and let $g_F $ be a $G$-invariant Riemannian metric on $F$. 
\end{assumption}

Out of these data, we produce a Riemannian metric $g_M \oplus_{\theta} g_F$ on the Borel construction $P \times_G F$. The construction is as follows. 

The datum of $\theta$ can be given by a certain $1$-form $\theta \in \cA^1 (P;\fg)$, or more geometrically by one of the following equivalent gadgets:
\begin{enumerate}
\item a $G$-equivariant left inverse $V$ of the inclusion map $\mathcal{V}_P := \ker (dp) \to TP$ of the vertical tangent bundle,
\item or by a $G$-invariant complementary subbundle $\mathcal{H}_P \subset TP$ to $\mathcal{V}_P$, the \emph{horizontal tangent bundle}.
\end{enumerate}
Given $V$, one puts $\mathcal{H}_P:= \ker (V)$ (this is also the same as $\ker (\theta)$). Given a decomposition $TP= \mathcal{V}_P \oplus \mathcal{H}_P$, one lets $V$ be the projection onto the first summand. There is also the horizontal projection $H: TP \to \mathcal{H}_P$; both $V$ and $H$ are $G$-equivariant. The differential of $p$ induces a $G$-equivariant isomorphism $\mathcal{H}_P \cong p^* TM$. 

Let $q: P \times F \to P \times_G F$ be the quotient map, and let $\pi: P \times_G F \to M$ be the projection to $M$. The subbundle $\mathcal{H}_P \times 0 \subset T(P \times F)$ projects to a subbundle 
\[
 q_* (\mathcal{H}_P \times 0)=: \mathcal{H}_{P \times_G F} \subset T(P\times_G F).
\]
Furthermore $d \pi$ restricts to a bundle isomorphism 
\[
 \mathcal{H}_{P \times_G F} \to \pi^* TM,
\]
and the kernel of $d \pi$ is the vertical tangent bundle $\mathcal{V}_{P \times_G F}= P \times_G TF$. Together, we obtain an isomorphism
\begin{equation}\label{eq:tangentbundle-borelconstruction}
\pi^* TM \oplus \mathcal{V}_{P \times_G F} \cong T(P \times_G F)
\end{equation}
which only depends on $\theta$.

\begin{defn}\label{defn:connectionmetric}
Let $G$, $P$, $M$, $F$, $\theta$, $g_M$ and $g_F$ as in \ref{ass:data-for-connecttion}. The \emph{connection metric} $g_M \oplus_\theta g_F$ is the Riemannian metric on $P \times_G F$ which under the isomorphism \eqref{eq:tangentbundle-borelconstruction} corresponds to the direct sum 
of $\pi^* g_M$ and the bundle metric on $\mathcal{V}_{P \times_G F} = P \times_G TF$ induced from the $G$-invariant metric $g_F$. 
\end{defn}

\begin{lem}\cite[Proposition 2.7.1]{GromollWalschap}
The map
\[
 \pi: (P \times_G F, g_M\oplus_\theta g_F) \to (M,g_M)
\]
is a Riemannian submersion with totally geodesic fibres.
\end{lem}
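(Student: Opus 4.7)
The first assertion is essentially built into Definition \ref{defn:connectionmetric}: the direct sum decomposition \eqref{eq:tangentbundle-borelconstruction} is $g_M \oplus_\theta g_F$-orthogonal by construction, so $T_h(P \times_G F)$ and $T_v(P \times_G F)$ are the horizontal and vertical distributions of $\pi$; moreover, $d\pi$ carries $T_h(P \times_G F) \cong \pi^* TM$ isometrically onto $(TM, g_M)$, which is exactly the Riemannian submersion condition.

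For the totally geodesic fibres my plan is a horizontal-holonomy argument followed by O'Neill's computation of the $T$-tensor. First I would note that for each $p \in P$ over $x \in M$ the map $f \mapsto [p, f]$ identifies $(F, g_F)$ isometrically with the fibre $\pi^{-1}(x)$ equipped with its induced metric; changing $p$ amounts to acting by an element of $G$, which is an isometry of $(F, g_F)$, so the fibre metric is intrinsically well-defined. Consequently, the horizontal lift of a path $c$ in $M$ induced by $\theta$ determines a diffeomorphism $\tau_c$ between fibres which, under the trivializations above, becomes the identity of $F$, and is therefore a fibre isometry. Put differently, the flow of a basic horizontal vector field $X^*$ preserves the fibre metric.

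With this in hand, I would verify totally geodesic fibres via the Koszul formula for $2g(\nabla_V W, X^*)$ with vertical vector fields $V, W$ and a basic horizontal $X^*$ (the horizontal lift of some $X$ on $M$); near a given point I extend $V$ and $W$ by the flow of $X^*$ so that $[X^*, V] = [X^*, W] = 0$. In the resulting expansion, the terms $V g(W, X^*)$ and $W g(V, X^*)$ vanish by horizontal-vertical orthogonality; $g([V, W], X^*)$ vanishes because the vertical distribution is integrable; $g([V, X^*], W)$ and $g([W, X^*], V)$ vanish by the choice of extensions; and $X^* g(V, W) = 0$ because the flow of $X^*$ is a fibrewise isometry. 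Hence $\nabla_V W$ has no horizontal component for all vertical $V, W$, which is the tensorial statement that the second fundamental form of each fibre vanishes.

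The only step requiring genuine geometric input—as opposed to formal manipulation of Koszul's formula—is the isometric-holonomy statement of the second paragraph; this is where the $G$-invariance of $g_F$ from Assumption \ref{ass:data-for-connecttion} and the equivariance of horizontal lifts from $\theta$ actually enter the proof. Everything else is bookkeeping, so I expect no serious obstacle.
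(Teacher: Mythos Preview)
Your argument is correct. The paper itself gives no proof of this lemma; it simply records the citation to \cite[Proposition 2.7.1]{GromollWalschap} and moves on. Your write-up supplies the standard direct verification: the Riemannian submersion property is indeed immediate from Definition \ref{defn:connectionmetric}, and your Koszul-formula computation of the horizontal component of $\nabla_V W$ is the usual route to totally geodesic fibres. The key geometric input you isolate---that horizontal transport along $\theta$ acts by fibre isometries because $g_F$ is $G$-invariant---is exactly right, and the tensoriality of the second fundamental form justifies choosing the flow-invariant extensions of $V$ and $W$ at a single point. Nothing is missing.
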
 

The most common case arises from a rank $r$ Riemannian vector bundle $V \to M$ with a metric connection $\omega$ and an $O(r)$-invariant metric $g^r$ on $\bR^r$: the metric connection induces a connection on the $O(r)$-principal bundle $\Fr(V)$ of orthonormal frames of \(V\). 

Let us record an obvious, but useful naturality property of connection metrics.

\begin{lem}\label{lem:invariance-connectionmetric}
Let $G$, $P$, $M$, $F$, $\theta$, $g_M$ and $g_F$ as in \ref{ass:data-for-connecttion}. 
\begin{enumerate}
\item Let $H$ be a further Lie group which acts from the left on $P$ and $M$, compatible with the $G$-action on $P$, the map $p$, the Riemannian metric $g_M$ and the connection $\theta$. Then the connection metric $g_M \oplus_\theta g_F$ is $H$-invariant.
\item If $K$ is a Lie group that acts on $F$, commuting with the $G$-action and leaving $g_F $ invariant. Then $g_M \oplus_\theta g_F$ is $K$-invariant.
\end{enumerate}
\end{lem}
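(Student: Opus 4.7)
The plan is a direct verification that exploits the splitting \eqref{eq:tangentbundle-borelconstruction}: in both parts an extra symmetry of some piece of the input data is inherited by the Borel construction, and because $g_M \oplus_\theta g_F$ is by definition the orthogonal sum corresponding to that splitting, it suffices to check that the action in question preserves the splitting and preserves each of the two summand metrics separately.

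For part (1), I would first note that the compatibility assumptions make the $H$-action on $P$ descend to an action on $P \times_G F$ via $h \cdot [p,f] := [h \cdot p, f]$, and that $\pi: P \times_G F \to M$ is $H$-equivariant. The hypothesis that $H$ preserves $\theta$ says exactly that $T_h P \subset TP$ is $H$-invariant, and since $T_h (P \times_G F)$ is defined as $q_* (T_h P \times 0)$, the horizontal subbundle of $P \times_G F$ is $H$-invariant as well. Hence $H$ preserves the splitting \eqref{eq:tangentbundle-borelconstruction}. On the horizontal factor $\pi^* TM$, the $H$-action intertwines via $d\pi$ with the $H$-action on $TM$, which preserves $g_M$ by hypothesis. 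On the vertical factor $P \times_G TF$ the $H$-action affects only the $P$-slot and so acts by the identity on each fiber $T_f F$; the induced bundle metric is therefore $H$-invariant. Combining gives the $H$-invariance of $g_M \oplus_\theta g_F$.

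For part (2), the plan is analogous. The $K$-action on $F$ descends to $P \times_G F$ via $k \cdot [p,f] := [p, k \cdot f]$ precisely because the $K$- and $G$-actions on $F$ commute. This descended action is trivial on the base $M$, so $\pi$ is $K$-invariant and the vertical subbundle is preserved; the horizontal subbundle $T_h (P \times_G F) = q_*(T_h P \times 0)$ is preserved as well because the $K$-action commutes with $q$ and is trivial in the $P$-direction. On $\pi^* TM$ the $K$-action is trivial so $\pi^* g_M$ is fixed, and on $P \times_G TF$ the action is fiberwise the push-forward of the $K$-action on $(F, g_F)$, which is isometric by assumption.

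The only step that is not pure bookkeeping is verifying that the assumed naturality properties really force the horizontal/vertical decomposition to be preserved; once that is checked, the invariance of each summand of the metric reduces immediately to the invariance of $g_M$ and $g_F$. I do not expect any substantive obstacle.
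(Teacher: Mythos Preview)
Your proposal is correct; the paper does not actually prove this lemma, introducing it as ``an obvious, but useful naturality property of connection metrics'' and stating it without proof. Your direct verification via the splitting \eqref{eq:tangentbundle-borelconstruction} is exactly the routine check the authors are leaving to the reader.
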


Assume the situation of (1), let $q:Q \to N$ be a smooth $H$-principal bundle with a connection $\omega$. In this situation, we can form the associated bundle 
\[
Q \times_H (P \times_G F) = (Q \times_H P) \times_G F \to N
\]
with fibre \(P\times_G F\).
Note that $Q \times_H P$ is a $G$-principal bundle over $Q \times_H M$, and it has an induced connection $Q \times_H \theta$, given by the following recipe. 
The subbundle $\mathcal{H}_Q \oplus \mathcal{H}_P \subset T(P \times Q)$ is $H \times G$-invariant and projects to an $G$-invariant subbundle $\mathcal{H}_{Q \times_H P} \subset T(Q \times_H P)$ which is complementary to $\mathcal{V}_{Q \times_H P}$.

\begin{lem}\label{lem:comparison-of-connection-metrics}
In the situation just described, the two metrics $(g_N \oplus_\omega  g_M) \oplus_{Q \times_H \theta} g_F$ and $g_N \oplus_\omega (g_M \oplus_\theta g_F)$ on $(Q \times_H P)\times_G F$ agree.
\end{lem}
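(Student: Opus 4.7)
The plan is to exhibit a common orthogonal decomposition of $TE$, where $E := (Q \times_H P) \times_G F$, such that under both metrics it becomes block diagonal with matching blocks. Write $\pi: Q \times P \times F \to E$ for the quotient map by the $H \times G$-action. I claim there is a canonical decomposition
\[
 TE = H_N \oplus W_M \oplus W_F,
\]
in which $H_N$, $W_M$, $W_F$ are, respectively, the images under $d\pi$ of $T_h Q \oplus 0 \oplus 0$, $0 \oplus T_h P \oplus 0$, and $0 \oplus 0 \oplus TF$.

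The main step is to verify that these three subbundles are the intrinsic horizontal and vertical pieces used in each of the two constructions. For the outer metric $(g_N \oplus_\omega g_M) \oplus_{Q \times_H \theta} g_F$, the connection $Q \times_H \theta$ on the $G$-bundle $Q \times_H P \to Q \times_H M$ has horizontal distribution $\pi(T_h Q \oplus T_h P \oplus 0)$, which gives the horizontal bundle $H_N \oplus W_M$ of $E \to Q \times_H M$, with $W_F$ as the vertical bundle. Decomposing further via the $\omega$-splitting of $T(Q \times_H M) \to N$: unwinding the construction of $Q \times_H \theta$ from the paragraph preceding the lemma, the lift of the $\omega$-horizontal subbundle of $T(Q \times_H M)$ is precisely $H_N$, and the lift of the vertical subbundle $Q \times_H TM$ is $W_M$. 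For the inner metric $g_N \oplus_\omega (g_M \oplus_\theta g_F)$, view $E \to N$ as the $(P \times_G F)$-associated bundle of $Q \to N$, using Lemma \ref{lem:invariance-connectionmetric}(1) to see that $H$ acts on $(P \times_G F, g_M \oplus_\theta g_F)$ by isometries. The $\omega$-horizontal distribution of $E \to N$ is $H_N$ by direct inspection, and its vertical bundle $Q \times_H T(P \times_G F)$ splits via the $\theta$-connection into $Q \times_H T_h(P \times_G F) \oplus Q \times_H T_v(P \times_G F) = W_M \oplus W_F$. Hence both constructions see the same three-block decomposition.

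With the subbundles unified, the three block metrics are compared directly from Definition \ref{defn:connectionmetric}. On $H_N$, both constructions give $\pi_E^* g_N$, once as the outermost step via $Q \times_H \theta$ preserving the inner product of $\omega$-horizontal lifts, once as the outermost step via $\omega$ directly. On $W_M$, both give the Borel-construction metric associated to $g_M$ on $Q \times_H TM$, transferred to $W_M$ by the projection $E \to Q \times_H M$: in the outer case this appears at the first step inside $g_N \oplus_\omega g_M$ and is pulled back via $Q \times_H \theta$; in the inner case it appears as the horizontal part of $g_M \oplus_\theta g_F$ and is descended via the Borel construction by $H$. On $W_F$, both yield the $H \times G$-Borel-construction metric induced from the $G$-invariant metric $g_F$, assembled either in one step via $Q \times_H \theta$ or in two steps via $\theta$ followed by $\omega$.

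The main obstacle is purely organizational: identifying the horizontal and vertical subbundles of the two constructions as the same three subspaces of $TE$ coming from $T(Q\times P \times F)$. Once this is in place, the matching of the three block metrics follows immediately from Definition \ref{defn:connectionmetric} together with the explicit description of $Q \times_H \theta$ recalled before the lemma.
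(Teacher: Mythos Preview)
Your proof is correct and follows essentially the same approach as the paper: both arguments rest on the single observation that $T((Q\times_H P)\times_G F)$ has a canonical three-fold decomposition coming from $T_hQ \oplus T_hP \oplus TF$ on $Q\times P\times F$, with respect to which each of the two connection metrics is block diagonal with blocks $g_N$, $g_M$, $g_F$. The paper phrases this upstairs---pulling both metrics back along the quotient map $r\colon Q\times P\times F\to E$ and checking that each becomes $g_N\oplus g_M\oplus g_F$ under the identification $r^*TE\cong T_hQ\times T_hP\times TF$---while you phrase it downstairs on $E$ itself; the content is the same, and your version spells out the bookkeeping that the paper compresses into one line.
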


\begin{proof}
Let $r: Q \times P \times F \to Z:= Q \times_H P \times_G F$ be the quotient map. It induces $G \times H$-equivariant isomorphisms
\[ 
q^* TN \times p^* TM \times TF \cong \mathcal{H}_Q \times \mathcal{H}_P \times TF  \cong r^* TZ. 
\]
Under this isomorphism, both connection metrics on $Z$ correspond to $g_N \oplus g_M \oplus g_F$.
\end{proof}

\subsection{The curvature of connection metrics}

We assume the notation introduced in \S \ref{subsec:connectionmetric}, and compute the scalar curvature of the connection metric $g_M \oplus_\theta g_F$ with the aid of O'Neill's formulas \cite{ONeill}, see also \cite{besse87}, Chapter 9. For a general Riemannian submersion $\pi:E \to M$, O'Neill defines two tensor fields $A$, $T \in \Gamma (E, T^* E^{\otimes 2} \otimes TE)$. Because the fibres of $\pi$ are totally geodesic in our situation, the tensor field $T$ vanishes \cite[\S 9.26]{besse87} and can be ignored. The $A$-tensor is defined by the formula (here $e,f$ are vector fields on $E$)
\[
 A_e f = V \nabla_{He} (Hf) + H \nabla_{He} (Vf). 
\]
\begin{lem}\label{lem:propertioes-A-tensor}
\begin{enumerate}
\item If $x,y$ are horizontal, $A_x y= \frac{1}{2} V [x,y]$. In particular, if \(E=P\times_G F\), $A_x y$ only depends on the connection $\theta$ and not on the involved metrics.
\item If $x$ is horizontal and $v$ is vertical, $A_x v$ is horizontal.
\item If $x,y$ are horizontal and $v$ is vertical, we have
\begin{equation}\label{eq:Atensorcalc}
\scpr{A_x v,y} + \scpr{v, A_x y} = 0.
\end{equation}
\end{enumerate}
\end{lem}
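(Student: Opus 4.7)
The plan is to prove all three parts by directly unpacking the definition of $A$ and applying standard identities for the Levi--Civita connection on a Riemannian submersion.

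For part (i), specializing to horizontal $x,y$ one has $Hx=x$, $Vy=0$, $Hy=y$, $Vx=0$, so three of the four terms in the defining formula drop out and $A_x y = V\nabla_x y$. Since $A$ is a pointwise tensor, it suffices to check the formula on horizontal lifts $\tilde X,\tilde Y$ of vector fields on $M$ (so-called basic vector fields), which span $T_h(P\times_G F)$ fibrewise. For such fields I would apply the Koszul formula to $2\scpr{\nabla_{\tilde X}\tilde Y, v}$ for an arbitrary vertical $v$: the inner product $\scpr{\tilde X,\tilde Y}$ is the pullback of $\scpr{X,Y}$ and hence is constant along the fibres, and the bracket of a basic field with a vertical field is again vertical (this is immediate from the Leibniz rule applied to pulled-back functions), so all of the non-bracket terms in the Koszul formula collapse and one is left with $\scpr{V\nabla_{\tilde X}\tilde Y, v} = \tfrac{1}{2}\scpr{[\tilde X,\tilde Y],v}$. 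This is the desired formula. For the second assertion in (i), note that both the subbundle $T_h(P\times_G F)$ and its complementary projection $V$ were constructed in \S\ref{subsec:connectionmetric} purely from $\theta$, with no reference to $g_M$ or $g_F$; hence $V[x,y]$ depends only on the connection.

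Part (ii) is immediate from the definition: with $x$ horizontal and $v$ vertical one has $Hv=0$ and $Vx=0$, so $A_x v = H\nabla_x v \in T_h(P\times_G F)$.

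For part (iii), I would combine (i) and (ii): using $A_x v = H\nabla_x v$ and $A_x y = V\nabla_x y$, and noting that $y$ is already horizontal and $v$ is already vertical,
\[
\scpr{A_x v, y} + \scpr{v, A_x y} = \scpr{\nabla_x v, y} + \scpr{v, \nabla_x y} = x\scpr{v,y} = 0,
\]
the last equality holding because $v$ is vertical and $y$ is horizontal, hence pointwise orthogonal. The only genuinely nontrivial step in the lemma is the Koszul-formula calculation in (i); the rest amounts to tracking horizontal and vertical components through the definition.
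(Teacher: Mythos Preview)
Your proof is correct and essentially matches the paper's: the paper simply cites O'Neill's Lemma~2 for part~(i) while you supply the Koszul-formula computation directly, and your arguments for (ii) and (iii) are verbatim the paper's. One cosmetic slip: the defining formula for $A$ in the paper has two terms, not four, so only one term drops out when $x,y$ are horizontal; your conclusion $A_x y = V\nabla_x y$ is correct regardless.
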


\begin{proof}
The first formula is \cite[Lemma 2]{ONeill}, and the second claim is clear. The third follows from
\[
 \scpr{A_x v,y} + \scpr{v, A_x y} = \scpr{H(\nabla_x v),y}+ \scpr{v, V (\nabla_x y)} =  \scpr{\nabla_x v,y} + \scpr{v, \nabla_x y} = x \scpr{v,y}=0.
\]
\end{proof}

For two horizontal vectors, we can express $A_x y$ in terms of the curvature of $\theta$, as follows. For $X \in \fg$, let $\rho(X)$ be the vector field on $F$ given by differentiating the $G$-action, and let $\Omega \in \cA^2 (P;\fg)$ be the curvature form of the connection $\theta$. For $q \in F$, let $\varphi: P \to P \times_G F$ be the map $p \mapsto [(p,q)]$. 

Let $x,y \in T_{[(p,q)]} (P \times_G F)$ be two horizontal vectors. By the definition of the horizontal distribution, we find (unique) horizontal tangent vectors $x_*,y_* \in T_p P$, such that $(T\varphi) x_* = x$ and $(T\varphi) y_* = y$. 

Using the formula \cite[p. 51]{Dupont} for the evaluation of $\Omega$ on horizontal vectors and \cite[Lemma 2]{ONeill}, we obtain the formula
\begin{equation}\label{eqn:computation-A-ternsor}
A_x y = - \rho(\Omega (x_*,y_*))_q.
\end{equation}

Now let $\sec_M$, $\sec_F$ and $\sec$ be the sectional curvatures of $g_M$, $g_F$ and $g_M \oplus_\theta g_F$, respectively. If $v,w$ are orthonormal vertical vectors and $x,y$ are orthonormal horizontal vectors, we have the following identities, by \cite[Corollary 1]{ONeill}: 
\begin{equation}\label{eq:ONeillformula}
\begin{split}
\sec (v,w) & = \sec_F (v,w), \\
\sec (x,v)  & = \norm{A_x v}^2 ,\\
\sec (x,y) & = \sec_M (x,y) - 3 \norm{A_x y}^2. 
\end{split}
\end{equation}
Now we can calculate the scalar curvature of $g_M \oplus_\theta g_F$. Let $(x_i)$ be an orthonormal basis for the horizontal vectors, and $(f_j)$ an orthonormal basis for the vertical vectors, both at a fixed point of $P \times_G F$. Then at this point

\begin{align*}
\scal (  g_M \oplus_\theta g_F)&= 2 \sum_{i<j} \sec (x_i,x_j) + 2 \sum_{i,j} \sec (x_i, f_j) +2 \sum_{i <j} \sec (f_i, f_j) \\
&=\scal (g_M) - 6 \sum_{i<j} \norm{A_{x_i} x_j}^2 + 2 \sum_{i,j} \norm{A_{x_i} f_j}^2 + \scal(g_F). 
\end{align*}

Since $A_{x_i} f_j$ is horizontal, its norm square is the same as 

\begin{align*}
  \sum_{i,j} \norm{A_{x_i} f_j}^2 &= \sum_{i,j,k} \scpr{A_{x_i} f_j, x_k}^2\\
  &=\sum_{i,j,k} \scpr{f_j,A_{x_i}  x_k}^2 && \text{(by (\ref{eq:Atensorcalc}))}\\
  &= \sum_{i,k} \norm{A_{x_i}  x_k}^2  && \text{(since $A_{x_i} x_j$ is vertical)}\\
  &=2 \sum_{i<k} \norm{A_{x_i}  x_k}^2 && \text{(by Lemma \ref{lem:propertioes-A-tensor} (1)).}
\end{align*}

Altogether, we obtain 
\begin{equation}\label{eq:scalarcurvature-of-connectionmetric}
\scal (  g_M \oplus_\theta g_F) = \scal (g_M) + \scal(g_F) -|A|^2, 
\end{equation}
where we denote 
\[
 |A|^2 := 2 \sum_{i<j} \norm{A_{x_i} x_j}^2 \geq 0.
\]
For $t>0$, we can scale the metric $g_F$ to $t g_F$. This does not change the orthonormal basis $(x_i)$, since $g_M$ is unchanged; so $A_{x_i} x_j$ is left unchanged, but its norm square gets scaled by $t$. Moreover, $\scal (t g_F)= \frac{1}{t} \scal (g_F)$. Altogether, we get
\begin{equation}\label{eq:scalarcurvature-scaling-identity}
\scal (  g_M \oplus_\theta t g_F) = \scal (g_M) + \frac{1}{t} \scal(g_F) -  t |A|^2 .
\end{equation}
More generally, with an additional scaling parameter, we have 
\begin{equation}\label{eq:scalarcurvature-scaling-identity2}
\scal ( s g_M \oplus_\theta t g_F) = \frac{1}{s} \scal (  g_M \oplus_\theta \frac{t}{s} g_F)
= \frac{1}{s} \scal (g_M) + \frac{1}{t} \scal(g_F) -  \frac{t}{s^2} |A|^2.
\end{equation}

\subsection{The generalized Gromov--Lawson--Chernysh theorem}

We can now state the generalized Gromov--Lawson--Chernysh surgery theorem \cite[Theorem 3.1]{Kordass} (see \cite[Theorem 3]{SchoenYau} for a precursor). Using the notion of right stability, the result might be stated as follows. 

\begin{thm}\cite[Theorem 3.1]{Kordass}\label{lemma:connectionmetricsrightstable}
Let $V \to N$ be a Riemannian vector bundle of rank $r \geq 3$ over a closed manifold $N$ with unit disc bundle \(D(V)\) and unit sphere bundle \(S(V)\), let $\omega$ be a metric connection on $V$ and let $h$ be a Riemannian metric on $N$.
If the connection metric $g = h \oplus_{\omega} g_{\torp}^r$ has positive scalar curvature, where $g_{\torp}^r$ is a torpedo metric, then $g$ is right stable on $D(V) \colon \emptyset \leadsto S(V)$.
\end{thm}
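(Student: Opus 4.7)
The plan is to show that this statement is essentially Korda\ss{}'s surgery theorem \cite[Theorem 3.1]{Kordass} translated into the language of right stability, in precise parallel to the way Theorem \ref{thm:surgerytheorem} is the right-stability reformulation of Chernysh's original surgery theorem. So I envision the proof as the standard bridge between these two formulations.

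To that end, I would fix an arbitrary cobordism $W : S(V) \leadsto M_2$ and an arbitrary $h_2 \in \Riem^+ (M_2)$, and verify that the gluing map
\[
\mu(g, \_) : \Riem^+ (W)_{h_1, h_2} \lra \Riem^+ (D(V) \cup_{S(V)} W)_{h_2}
\]
is a weak homotopy equivalence, where $h_1 := g|_{S(V)}$. Writing $\widetilde{M} := D(V) \cup_{S(V)} W$, this is a compact manifold with boundary $M_2$ containing the zero section of $V$ as an embedded closed submanifold $N \hookrightarrow \widetilde{M}$ whose normal bundle is canonically identified with $V$ and whose tubular neighbourhood is exactly $D(V)$. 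Since $g$ is of product form near $S(V)$, the map $\mu(g, \_)$ is injective and a homeomorphism onto the subspace $\Riem^+ (\widetilde{M}; g)_{h_2} \subset \Riem^+ (\widetilde{M})_{h_2}$ of those psc metrics on $\widetilde{M}$ that restrict to $g$ on all of $D(V)$; restriction to $W$ provides a continuous two-sided inverse.

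At this point I would invoke Korda\ss{}'s theorem in its original form, which asserts that the inclusion of the subspace of psc metrics on $\widetilde{M}$ (with boundary metric $h_2$ held fixed) that agree with the connection metric $h \oplus_\omega g_\torp^r$ on the tubular neighbourhood $D(V)$ of $N$ into $\Riem^+ (\widetilde{M})_{h_2}$ is a weak homotopy equivalence. Combined with the homeomorphism of the previous paragraph, this gives that $\mu(g, \_)$ is a weak equivalence, which is precisely the right stability of $g$ on the cobordism $D(V) : \emptyset \leadsto S(V)$.

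The only substantive geometric content is inside Korda\ss{}'s proof (whose scalar-curvature bookkeeping is exactly why the hypothesis $\scal(g)>0$ and the codimension bound $r \geq 3$ both show up here); the remaining work is entirely formal. The one modest check I would expect to carry out is that Korda\ss{}'s result, originally formulated with a closed ambient manifold in mind, applies verbatim when the ambient manifold carries a fixed boundary metric---this is a routine parametric extension, analogous to how Chernysh's closed-manifold theorem is used in \cite{EbFrenck} to prove the right stability of standard torpedo metrics, and I do not expect it to require any new ideas. So the main obstacle, already overcome in \cite{Kordass}, is the construction of the parametric surgery in the case of a nontrivial normal bundle.
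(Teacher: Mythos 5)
Your proposal is correct and matches the paper's (implicit) approach: the paper does not give a new proof of this statement, but simply cites Korda\ss{}'s Theorem 3.1 and presents the right-stability formulation as a reformulation, which is exactly the formal gluing/restriction bridge you describe. The one caveat you flag---that Korda\ss{}'s theorem must be applied with a fixed boundary metric $h_2$ on the complement---is indeed the only point requiring care, and is the same routine parametric extension the paper invokes elsewhere for Chernysh's theorem via \cite{EbFrenck}.
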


The following consequence of Theorem \ref{lemma:connectionmetricsrightstable} is our main tool to construct admissible splittings.

\begin{prop}\label{prop:janbernhard}
Assume that 
\begin{enumerate}
\item $V_i \to N_i$, $i=0,1$, are Riemannian vector bundles over closed manifolds, of rank $r_i := \rank(V_i) \geq 3$,
\item $\psi: S(V_0) \cong S(V_1)$ is a diffeomorphism of the sphere bundles, and $M:= D(V_0) \cup_{\psi} D(V_1)^{op}$, 
\item $G$ is a Lie group which acts smoothly on $N_i$, and on $V_i$ by bundle isometries, such that $\psi$ is $G$-equivariant (hence $G$ acts on $M$),
\item $\omega_i$, $i=0,1$, are $G$-equivariant metric connections on $V_i$ and $h_i$ are $G$-invariant Riemannian metric on $N_i$, and the ($G$-invariant) connection metrics $g_i := h_i \oplus_{\omega_i} g_{\torp}^{r_i}$ have positive scalar curvature (for suitable torpedo metrics $g_{\torp}^{r_i}$).
\item $\psi^* g_1|_{S(V_1)}$ and $g_0|_{S(V_0)}$ lie in the same path component of $\Riem^+ (S(V_0))^G$, the space of $G$-invariant psc metrics on $S(V_0)$. 
\item Let finally $P \to B$ be a smooth $G$-principal bundle over a closed manifold. 
\end{enumerate}
Then the manifold $P \times_G M$ has an admissible splitting.
\end{prop}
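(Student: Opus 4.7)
The plan is to exhibit the admissible splitting
\[
P \times_G M = (P \times_G D(V_0)) \cup_{P \times_G S(V_0)} (P \times_G D(V_1))^{op},
\]
and to verify the three conditions of Definition \ref{defn:splittingcondition-version1} by equipping each half with a suitably rescaled connection metric.

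First I would fix an auxiliary Riemannian metric $g_B$ on $B$ and a connection $\Theta$ on $P \to B$. Since the metrics $g_i$ are $G$-invariant (Lemma \ref{lem:invariance-connectionmetric}), the connection metrics $g_B \oplus_\Theta g_i$ are defined on the $D(V_i)$-bundles $P \times_G D(V_i) \to B$. On the other hand, the associated vector bundle $P \times_G V_i \to P \times_G N_i$ has rank $r_i \geq 3$ and inherits a Riemannian metric and a metric connection from $(V_i,\omega_i)$ and $\Theta$, and Lemma \ref{lem:comparison-of-connection-metrics} identifies $g_B \oplus_\Theta g_i$ with the disc-bundle connection metric on $D(P \times_G V_i)$ built from the base metric $g_B \oplus_\Theta h_i$, the induced connection, and the torpedo fibre $g_{\torp}^{r_i}$. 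This is the precise format of input required by Theorem \ref{lemma:connectionmetricsrightstable}.

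Next I would rescale $g_B$ to $s g_B$ for a large parameter $s>0$. By \eqref{eq:scalarcurvature-scaling-identity2}, $\scal(s g_B \oplus_\Theta k)$ converges uniformly to the fibrewise $\scal(k)$ as $s \to \infty$, uniformly for $k$ in any compact family of $G$-invariant fibre metrics. Using the path $t \mapsto k_t$ in $\Riem^+(S(V_0))^G$ from hypothesis (5) joining $g_0|_{S(V_0)}$ to $\psi^* g_1|_{S(V_1)}$, compactness of $[0,1]$ yields a single $s$ for which $s g_B \oplus_\Theta g_0$, $s g_B \oplus_\Theta g_1$, and every $s g_B \oplus_\Theta k_t$ are simultaneously psc. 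This last family connects the induced boundary metrics in $\Riem^+(P \times_G S(V_0))$, verifying condition (1); Theorem \ref{lemma:connectionmetricsrightstable} applied to $P \times_G V_0$ gives right-stability of $s g_B \oplus_\Theta g_0$, and applied to the reverse cobordism gives left-stability of $s g_B \oplus_\Theta g_1$, verifying condition (2); and the sphere-bundle fibration $S^{r_i - 1} \to P \times_G S(V_i) \to P \times_G N_i$, having $1$-connected fibre since $r_i - 1 \geq 2$, makes the projection $2$-connected, so composing with the homotopy equivalence $P \times_G D(V_i) \simeq P \times_G N_i$ gives condition (3).

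The main obstacle is the bookkeeping that allows one and the same rescaled connection metric to be viewed, on the one hand, as a fibre bundle over $B$ with $D(V_i)$-fibres (the natural picture for exhibiting the $G$-invariant structure and for building the boundary path), and on the other hand as a disc bundle of a vector bundle over $P \times_G N_i$ (the only way to invoke Theorem \ref{lemma:connectionmetricsrightstable}). Lemma \ref{lem:comparison-of-connection-metrics} is exactly the identification that reconciles these two descriptions; once it is in place, the scaling argument for positivity of scalar curvature and the verification of $2$-connectedness are routine.
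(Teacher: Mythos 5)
Your proof is correct and follows essentially the same strategy as the paper's: decompose $P\times_G M$ into the two disc bundles, invoke Lemma \ref{lem:comparison-of-connection-metrics} to recognize the total connection metric as a disc-bundle connection metric with torpedo fibre, apply Theorem \ref{lemma:connectionmetricsrightstable} for stability, and use compactness of the path in $\Riem^+(S(V_0))^G$ together with the scaling formula to pick a uniform scaling parameter. The only cosmetic difference is that the paper scales the fibre metric down by $u\to 0$ and then globally renormalizes by $1/u$ (which is why it needs \cite[Lemma 2.3.4]{ERWpsc3} to transport right-stability along the rescaling path), while you scale the base metric up by $s\to\infty$ directly so that the metric is already in the form to which Theorem \ref{lemma:connectionmetricsrightstable} applies; these produce the same metric with $s=1/u$.
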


\begin{proof}
We can write
\begin{equation}\label{eq:proof-diskbundledec-admissible}
P \times_G M = (P \times_G D(V_0)) \cup_{P \times_G \psi} (P \times_G D(V_1)^{op}).
\end{equation}
The inclusions $S(V_i) \hookrightarrow D(V_i)$ are $2$-connected because $r_i \geq 3$, and hence so are the inclusions $P \times_G S(V_i) \to P \times_G D(V_i)$. 

We can write 
\[
P \times_G V_i = P \times_G (\Fr (V_i) \times_{O(r_i)} \bR^{r_i}) = (P \times_G \Fr(V_i)) \times_{O(r_i)} \bR^{r_i}. 
\]
Choose a Riemannian metric $g_B$ on $B$ and a connection $\theta$ on $P \to B$. We consider the metrics
\begin{equation}\label{eqn:proof-diskbundledec-admissible}
m_{i,u}:= (g_B \oplus_\theta u h_i) \oplus_{P \times_\theta \omega_i} u g_{\torp}^{r_i} \stackrel{\ref{lem:comparison-of-connection-metrics}}{=} 
g_B \oplus_\theta (u h_i \oplus_{\omega_i} u g_{\torp}^{r_i}) = g_B \oplus_\theta ug_i
\end{equation}
on $P \times_G V_i$ for a scaling parameter $u >0$. By the scaling identity \eqref{eq:scalarcurvature-scaling-identity}, there is $u_0$, such that for all $0< u \leq u_0$, $\scal(m_{i,u})>0$ (look at the right hand side of \eqref{eq:scalarcurvature-scaling-identity}). 

For such $u$, $m_{i,u}$ lies in the same path component of $\Riem^+ (P \times_G D(V_i))$ as 
\[
\frac{1}{u} m_{i,u} = (\frac{1}{u} g_B \oplus_\theta  h_i) \oplus_{P \times_\theta \omega_i}  g_{\torp}^{r_i} ,
\]
which is right stable by Theorem \ref{lemma:connectionmetricsrightstable} (where $P \times_G D(V)$ is viewed as a cobordism from $\emptyset$ to $P \times_G S(V)$). Using \cite[Lemma 2.3.4]{ERWpsc3}, it follows that $m_{i,u}$ is right stable. 

Let $k_i := g_i|_{S(V_i)}$ and let $\gamma: [0,1]  \to \Riem^+ (S(V_0))^G$ be a path connecting $k_0$ and $\psi^* k_1$. Without loss of generality, we can assume that $u_0$ is small enough so that $\scal (g_B \oplus_\theta u \gamma (t))>0$ for all $0< u \leq u_0$ and $t \in [0,1]$. 

For such a choice of $u$, we have a right stable metric $m_{0,u}$ on $Q_0:=P \times_G D(V_0)$, a left stable metric $m_{1,u}^{op}$ on $Q_1:=P \times_G D(V_1)^{op}$ and a path $\gamma': [0,1] \to \Riem^+ (P \times_G S(V_0))$ from $m_{0,u}|_{P \times_G S(V_0)}$ to $(P \times_G \psi)^* m_{1,u}$. This completes the proof.
\end{proof}

\subsection{Contruction of the splittings}\label{subsec:projectivebundles}

In this subsection, we finish the proof of Theorem \ref{thm:splitting-of-projective-bundles}. Let us first introduce uniform notation for the real, complex and quaternionic case.

\begin{enumerate}
\item $\bK$ is either $\bR$, $\bC$ or $\bH$, and $k := \dim_\bR(\bK)$. We denote by $\Im (\bK) \subset \bK$ the imaginary part, which we identify with the Lie algebra of the group $S^{k-1} \subset \bK^\times$. 
\item We consider $\bK^n$ as a \emph{right} $\bK$-vector space, equipped with the standard inner product. The group $S^{k-1}$ acts on $\bK^n$ and $S(\bK^n)= S^{kn-1}$ by right multiplication; let $R_z$ denote the right multiplication by $z \in S^{k-1}$. 
\item We denote by
\[
G(n):= N_{O(kn)} (S^{k-1})
\]
the normalizer subgroup of $S^{k-1}$ in $O(kn)$. The centralizer 
\[
H(n):= C_{O(kn)} (S^{k-1}) \subset G(n)
\]
is equal to $O(n), U(n), \Sp(n)$, depending on the case at hand. The action of $H(n)$ on $\bK^n$ by left matrix multiplication commutes with the $S^{k-1}$-action, and we let $L_A$ be the left multiplication by $A \in H(n)$. 
\item The quotient map $p:S^{kn-1} \to \kp^{n-1}:= S^{kn-1}/S^{k-1}$ to the projective space is an $S^{k-1}$-principal bundle. The projective space has an induced action of the normalizer $G(n)$, and the kernel of that $G(n)$-action is $S^{k-1}$, so that the group
\[
PG(n):= G(n) / S^{k-1}
\]
acts (faithfully) on $\kp^{n-1}$. 
\item More generally, we consider the subgroup $O(kn)\times O(km) \subset O(k(m+n))$, and define 
\[
G(n,m) := N_{O(kn) \times O(km)} (S^{k-1}) = G(n+m) \cap (O(kn) \times O(km)),
\]
\[
H(n,m):= C_{O(kn) \times O(km)} (S^{k-1}) = H(n) \times H(m) \subset G(n,m)
\]
and 
\[
PG(n,m):= G(n,m) / S^{k-1} \subset PG(n+m).
\]
\item The action of $G(n)$ on $S^{kn-1}$ is effective and transitive with stabilizer $\Stab_{G(n)}(e_n)= G(n-1)$. The action of $PG(n)$ on $\kp^{n-1}$ is transitive, and the stabilizer of $[e_n] \in \kp^{n-1}$ is equal to $PG(n-1,1)$. 
\item The structure of the group $G(n)$ is determined as follows. There is an exact sequence 
\[
1 \to H(n) \to G(n) \to \Aut (S^{k-1}) \to 1
\]
in each case. It follows that $G(n)=O(n)$ if $\bK=\bR$, that $G(n) = C_2 \ltimes U(n)$ in the complex case, where the cyclic group $C_2$ acts by complex conjugation, and that in the quaternionic case $G(n) = \Sp(n) \times_{\pm 1} \Sp(1)$, where $(A,z)$ act by $L_A R_z$. The quotient groups are $PG(n)=O(n)/\pm 1$ in the real case, $PG(n)=C_2 \ltimes U(n)/S^1$ in the complex case and $PG(n)= \Sp(n)/\pm 1=PSp(n)$ in the quaternionic case. 
\end{enumerate}

The goal of this section is the proof of the following result. 

\begin{prop}\label{prop:projectivebundles-decomposition}
Let $E \to B$ be a smooth fibre bundle with fibre $\kp^{m+n-1}$ and structure group $PG(n,m)$ over a closed manifold $B$. Then $E$ has an admissible splitting in each of the following cases.
\begin{enumerate}
\item $\bK=\bR$, $n,m \geq 3$, 
\item $\bK=\bC$, $n,m \geq 2$, 
\item $\bK=\bH$, $n,m \geq 1$.
\end{enumerate}
\end{prop}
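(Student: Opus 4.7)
The plan is to verify the hypotheses of Proposition \ref{prop:janbernhard} for the standard disc bundle decomposition of $\kp^{m+n-1}$. Let
\[
V_0 := S^{kn-1} \times_{S^{k-1}} \bK^m \to \kp^{n-1},\qquad V_1 := S^{km-1} \times_{S^{k-1}} \bK^n \to \kp^{m-1}
\]
be the natural $\bK$-vector bundles associated to the Hopf principal bundles, where $S^{k-1}$ acts on $\bK^{m_i}$ by scalar multiplication. Their real ranks $km$ and $kn$ are both at least $3$ precisely under the numerical hypotheses (i)--(iii). The identification $\kp^{m+n-1} = D(V_0) \cup_\psi D(V_1)^{op}$ is equivariant for the action of $G := PG(n,m)$, which acts on $V_i$ by bundle isometries through its natural projection $G \to PG(n_i)$, and the sphere-bundle identification $\psi$ is $G$-equivariant.

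Equip each $\kp^{n_i-1}$ with a Fubini--Study metric $h_i$ (realised as the submersion quotient of the round metric on $S^{kn_i-1}$), the bundles $V_i$ with the natural $G$-invariant metric connection $\omega_i$ inherited from the Hopf principal bundle, and torpedo metrics of radius $\delta_i$, forming $g_i := h_i \oplus_{\omega_i} g_\torp^{r_i}$. By the scalar curvature formula \eqref{eq:scalarcurvature-of-connectionmetric}, $\scal(g_i) = \scal(h_i) + \scal(g_\torp^{r_i}) - |A|^2$ is positive for sufficiently small $\delta_i$, since $\scal(h_i) > 0$ by the Fubini--Study geometry, the fibre scalar curvature is bounded below by $(r_i-1)(r_i-2)/\delta_i^2$, and $|A|^2$ depends only on $h_i$ and $\omega_i$.

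The crucial step is to connect $g_0|_{S(V_0)}$ and $\psi^* g_1|_{S(V_1)}$ through $G$-invariant psc metrics. Both sphere bundles identify canonically and $G$-equivariantly with
\[
X := (S^{kn-1} \times S^{km-1})/S^{k-1},
\]
the quotient by the diagonal $S^{k-1}$-action, and for each $(a,b) \in (0,\infty)^2$ the product metric $a^2 g_{S^{kn-1}} + b^2 g_{S^{km-1}}$ descends to a $G$-invariant Riemannian submersion metric $g_{a,b}$ on $X$. The two boundary metrics are of this form for specific parameters determined by the Fubini--Study scales and the $\delta_i$. Since each $S^{k-1}$-orbit in $S^{kn-1} \times S^{km-1}$ is a product of great-circle arcs in the two round factors, it is totally geodesic with induced metric $(a^2+b^2) g_{S^{k-1}}$, and O'Neill's formula \eqref{eq:scalarcurvature-of-connectionmetric} applied in the opposite direction yields
\[
\scal(g_{a,b}) = \frac{(kn-1)(kn-2)}{a^2} + \frac{(km-1)(km-2)}{b^2} - \frac{(k-1)(k-2)}{a^2+b^2} + |A|^2,
\]
which is strictly positive on the whole open quadrant in all three cases (the subtracted term is zero for $k=1,2$, and for $k=4$ it is dominated by the sphere terms via $\tfrac{1}{a^2} + \tfrac{1}{b^2} > \tfrac{1}{a^2+b^2}$). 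Hence the psc region is connected and any straight-line path joins the two boundary metrics through $G$-invariant psc metrics on $X$. Proposition \ref{prop:janbernhard} applied with this data and with $P\to B$ the $G$-principal bundle classifying $E$ then produces the admissible splitting of $E = P \times_G \kp^{m+n-1}$.

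The main obstacle is the scalar-curvature bookkeeping in the third paragraph: one has to recognise the two different boundary metrics, which a priori come from quite different disc bundle constructions, as members of a single natural two-parameter family on the common model $X$, and then verify uniformly across the real, complex, and quaternionic cases that this family lies entirely in the psc locus. Once this identification is in place, the path-connectivity argument is essentially formal.
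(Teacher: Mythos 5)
Your decomposition into disc bundles, the choice of connection-torpedo metrics $g_i = h_i \oplus_{\omega_i} g_\torp^{r_i}$, and the reduction to connecting the two boundary metrics through $PG(n,m)$-invariant psc metrics on $M_{k,n,m}=(S^{kn-1}\times S^{km-1})/S^{k-1}$ all match the paper's strategy. The gap is the key claim in your third paragraph: the two boundary metrics are \emph{not} of the form $g_{a,b}$.

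To see this, parametrize $PG(n,m)$-invariant metrics on $M_{k,n,m}$ by their scales $(s,u,t)$ on the three $K(n,m)$-irreducible summands $\bK^{n-1}\oplus\bK^{m-1}\oplus\Im(\bK)$ of the tangent space at the base point, as the paper does in Lemma \ref{lem:classification-of-invariant-metrics} (so $s,u$ scale the two horizontal summands of the double fibration over $\kp^{n-1}\times\kp^{m-1}$, and $t$ scales the $\Im(\bK)$ direction). The boundary metric $g_0|_{S(V_0)} = g_{\kp^{n-1}} \oplus_{\omega_n} \delta^2 g_{S^{km-1}}$ has round fibre over $\kp^{n-1}$, so it corresponds to $(s,u,t)=(1,\delta^2,\delta^2)$; in particular $t=u$. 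On the other hand, a short computation of horizontal lifts for the diagonal $S^{k-1}$-action on the product $a^2 g_{S^{kn-1}}+b^2 g_{S^{km-1}}$ shows that your $g_{a,b}$ corresponds to $(s,u,t)=\bigl(a^2,\,b^2,\,\tfrac{a^2b^2}{a^2+b^2}\bigr)$, and the harmonic-mean relation $\tfrac{1}{t}=\tfrac{1}{s}+\tfrac{1}{u}$ forces $t<\min(s,u)$. Hence $t<u$ strictly for every $(a,b)$, and neither boundary metric is ever hit. Your O'Neill computation $\scal(g_{a,b}) = \tfrac{(kn-1)(kn-2)}{a^2}+\tfrac{(km-1)(km-2)}{b^2}-\tfrac{(k-1)(k-2)}{a^2+b^2}+|A|^2$ is correct (the diagonal orbits are indeed totally geodesic even when $a\neq b$, and positivity follows), but since the two endpoints are not in this family, the path-connectivity you obtain does not connect them.

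To close the gap one must work in the full three-parameter (two-parameter for $\bK=\bR$) Berger-type family $s\,g_{\kp^{n-1}}\oplus_{\omega_n} g_{u,t}^{k,m}$, which Lemma \ref{lem:classification-of-invariant-metrics} shows is \emph{all} invariant metrics, compute its scalar curvature (Lemma \ref{lem:curvature-connection-metric-berger}), and show the psc locus is path-connected; the paper does this by exhibiting the psc condition as a convex inequality $q(x,y)-\ell(x,y)<c$ in the variables $x=1/s$, $y=1/u$ after scaling away $t$, giving contractibility (Lemma \ref{lem:contractibility-invariantmetrics}). Your two-parameter slice alone, while psc everywhere, is simply too small to contain the torpedo boundary metrics, which necessarily have $t=u$.

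Two minor additional remarks. First, ``$|A|^2$ depends only on $h_i$ and $\omega_i$'' is not quite right: the $A$-tensor field is determined by $\omega_i$, but its pointwise norm also depends on the fibre metric; what matters is that it stays bounded as $\delta_i\to 0$, which is true. Second, the remaining parts of your argument (equivariance of the decomposition, rank conditions, the appeal to Proposition \ref{prop:janbernhard}) are correct and match the paper.
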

Proposition \ref{prop:projectivebundles-decomposition} implies Theorem \ref{thm:splitting-of-projective-bundles}, because each of the structure groups mentioned in that theorem acts through its image in $PG(n,m)$. 

We start the proof of Proposition \ref{prop:projectivebundles-decomposition} with a suitable decomposition of $\kp^{n+m-1}$ into disc bundles which is invariant under $PG(n,m)$. 
Let 
\[
L_0 := \{ (v,w) \in S(\bK^n \times \bK^m) \vert \norm{v} \geq \norm{w}\},  \;  L_1 := \{ (v,w) \in S(\bK^n \times \bK^m)  \vert \norm{v} \leq \norm{w}\}
\]
with intersection 
\[
L_{01}:= L_0 \cap L_1 = \{ (v,w) \vert \norm{v}^2 = \norm{w}^2 = \frac{1}{2}\} \cong S(\bK^n) \times S(\bK^m). 
\]
The decomposition $S^ {kn+km-1} = L_0 \cup L_1$ is invariant under the action of $O(kn) \times O(km)$ and of $S^{k-1}$, and hence in particular under $G(n,m)$. It therefore induces a decomposition $\kp^{n+m-1}= P_0 \cup_{P_{01}} P_1$, where $P_i := L_i /S^{k-1}$, which is invariant under $PG(n,m)$. 
The map 
\[
\varphi_0: S(\bK^n) \times_{S^{k-1}} D(\bK^m) \to \kp^{n+m-1}; \; [v,w] \mapsto [\frac{1}{\sqrt{2}} (v,w)]
\]
is a diffeomorphism onto $P_0$, and there is a similar diffeomorphism 
\[
\varphi_1:  S(\bK^m) \times_{S^{k-1}} D(\bK^n) \cong P_1. 
\]
The diffeomorphisms $\varphi_i$ are $PG(n,m)$-equivariant. Altogether, we have found a $PG(n,m)$-invariant decomposition 
\begin{equation}\label{eq:decomposition-kpn-topology}
\kp^{n+m-1} = D (S(\bK^n) \times_{S^{k-1}}\bK^m) ) \cup_{\varphi_1^{-1}  \circ \varphi_0} D (S(\bK^m) \times_{S^{k-1}}\bK^n) )
\end{equation}
into disc bundles. If $n$ and $m$ are as in Proposition \ref{prop:projectivebundles-decomposition}, these disc bundles have rank at least $3$. The action of $PG(n,m)$ on the sphere bundle $S (S(\bK^n) \times_{S^{k-1}}\bK^m)$ is transitive. This sphere bundle is more conveniently written as 
\[
M_{k,n,m}:= S^{kn-1} \times_{S^{k-1}} S^{km-1}  .
\]
Note that $M_{k,n,m}$ is both, an $S^{km-1}$-bundle over $\kp^{n-1}$ and an $S^{kn-1}$-bundle over $\kp^{m-1}$. The action of the group $PG(n,m)$ on $M_{k,n,m}$ is transitive, because the action of $H(n,m)$ on $S^{kn-1} \times S^{km-1}$ is already transitive. 

Let $g_{S^{kn-1}}$ be the round metric on $S^{kn-1}$. The action of $O(kn)$ and hence of $G(n)$ is by isometries. The action of the subgroup $S^{k-1} \subset G(n)$ is geodesic in the sense that orbits of $1$-parameter subgroups of $S^{k-1}$ are geodesics in $S^{kn-1}$. We let $\mathcal{V}_{S^{kn-1}}$ be the $(k-1)$-dimensional distribution of tangent vectors to orbits. The distribution $\mathcal{V}_{S^{kn-1}}$ is preserved by $G(n)$, and so is its orthogonal complement $\mathcal{H}_{S^{kn-1}}:= (\mathcal{V}_{S^{kn-1}})^\bot$. Moreover $\mathcal{H}_{S^{kn-1}}$ is the horizontal distribution for a principal connection $\omega_n$ on the $S^{k-1}$-principal bundle $p: S^{kn-1} \to \kp^{n-1}$. In fact, $\omega_n$ is $G(n)$-equivariant, and it is the only connection with this property. 

The \emph{Fubini-Study metric} on $\kp^{n-1}$ is the quotient metric $g_{\kp^{n-1}}$ of the round metric. The map $\pi:(S^{kn-1},g_{S^{kn-1}}) \to (\kp^{n-1},g_{\kp^{n-1}})$ is a Riemannian submersion with totally geodesic fibres, and $g_{S^{kn-1}}$ can be identified with the connection metric 
\[
 g_{S^{kn-1}} = g_{\kp^{n-1}} \oplus_{\omega_n } g_{S^{k-1}}
\]
(since both, vertical and horizontal distributions are orthogonal for both metrics, and both metrics agree on horizontal and vertical vectors). 

The metric $g_{\kp^{n-1}} \oplus_{\omega_n} tg_\torp^{km}$ on $S^{kn-1} \times_{S^{k-1} } D^{km}= P_0$ is $PG(n,m)$-invariant, and so is the metric $g_{\kp^{m-1}} \oplus_{\omega_m} sg_\torp^{kn}$ on $S^{km-1} \times_{S^{k-1} } D^{kn}= P_1$. For $t,s$ sufficiently small, these have positive scalar curvature (provided $n$ and $m$ are as in Proposition \ref{prop:projectivebundles-decomposition}). By Proposition \ref{prop:janbernhard}, the proof of Proposition \ref{prop:projectivebundles-decomposition} is complete, once we can show that for suitable $s$ and $t$, the two metrics 
\[
 g_{\kp^{n-1}} \oplus_{\omega_n} tg_{S^{km-1}} ; \quad  g_{\kp^{m-1}} \oplus_{\omega_m} sg_{S^{kn-1}} \in \Riem^+ (M_{k,n,m})^{PG(n,m)}
\]
can be connected by a path of $PG(n,m)$-invariant psc metrics. Therefore, all that remains to be shown is:

\begin{lem}\label{lem:contractibility-invariantmetrics}
The space of $PG(n,m)$-invariant psc metrics on $M_{k,n,m}$ is contractible, if $k=1$ and $m,n \geq 3$, or $k=2$ and  $m,n \geq 2$, or $k=4$ and $m,n \geq 1$.
\end{lem}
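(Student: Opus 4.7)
The plan is to exploit that $PG(n,m)$ acts transitively on $M_{k,n,m}$, as recorded just before the lemma, so that $M_{k,n,m} = PG(n,m)/L$ is a homogeneous space. The space of invariant Riemannian metrics is therefore identified with the open cone of positive-definite $\Ad(L)$-invariant symmetric bilinear forms on $\mathfrak{g}/\mathfrak{l}$, an open convex subset of a finite-dimensional real vector space, hence contractible; the real question is whether the sublocus of psc metrics is contractible, and the strategy is to write it out explicitly.

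First I would work out the $L$-isotypic decomposition of $\mathfrak{g}/\mathfrak{l}$ using the splittings $\mathfrak{sp}(n) = \mathfrak{sp}(n-1) \oplus \mathfrak{sp}(1) \oplus \bH^{n-1}$ and its analogues for $\bK = \bC, \bR$. Under the assumptions of the lemma, the decomposition is multiplicity-free, with (at most) three pairwise non-isomorphic irreducible summands: the base direction $\bK^{n-1}$ of $M_{k,n,m} \to \kp^{n-1}$, the fibre-base direction $\bK^{m-1}$ of the fibrewise Hopf fibration $S^{km-1} \to \kp^{m-1}$, and (for $k \geq 2$) the Hopf fibre direction $\im(\bK)$. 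By Schur's lemma the space of invariant metrics is $\mathbb{R}_{>0}^d$ with $d \in \{1,2,3\}$, parametrized by scalings $(s,t,u)$ of these three summands, and each invariant metric is of the diagonal form $g_{s,t,u} = (s\, g_{\kp^{n-1}}) \oplus_{\omega_n} (t\, g_{\kp^{m-1}} \oplus_{\omega_m} u\, g_{S^{k-1}})$; in the complex case the outer $C_2$ of complex conjugation preserves each summand and imposes no further constraint.

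Next I would apply the scaling identity \eqref{eq:scalarcurvature-scaling-identity2} to both Riemannian submersions in this tower to obtain a scalar curvature formula of the schematic shape
\[
\scal(g_{s,t,u}) = \frac{A_1}{s} + \frac{A_2}{t} + \frac{A_3}{u} - \frac{A_4\,u}{t^2} - \frac{A_5\,u}{s^2},
\]
where $A_1, A_2, A_3 \geq 0$ are multiples of the scalar curvatures of the Fubini--Study / round metrics on $\kp^{n-1}, \kp^{m-1}, S^{k-1}$ and $A_4, A_5 \geq 0$ are the O'Neill $|A|^2$-contributions of the two submersions. One has $A_3 = 0$ precisely when $k \in \{1,2\}$ (because $S^0$ and $S^1$ are scalar-flat) and $A_4 = A_5 = 0$ precisely when $k = 1$ (because the Hopf ``connection'' is then discrete, hence flat); otherwise every coefficient is strictly positive.

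Finally, contractibility of the psc locus follows by inspection. For $k = 1$ the formula reduces to $A_1/s + A_2/t$, which is positive on all of $\mathbb{R}_{>0}^2$, so there is nothing to do. For $k \in \{2, 4\}$, the map $u \mapsto \scal(g_{s,t,u})$ is strictly decreasing (its $u$-derivative equals $-A_3/u^2 - (A_4/t^2 + A_5/s^2) < 0$), is positive for small $u$, and tends to $-\infty$ as $u \to \infty$; hence the psc locus is the open subgraph $\{0 < u < u_0(s,t)\}$ of a continuous positive function $u_0 \colon \mathbb{R}_{>0}^2 \to \mathbb{R}_{>0}$. The projection $(s,t,u) \mapsto (s,t)$ admits the section $(s,t) \mapsto (s,t,u_0(s,t)/2)$, and the fibrewise straight-line homotopy between identity and this section stays inside the subgraph, so the psc locus deformation retracts onto $\mathbb{R}_{>0}^2$ and is contractible. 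The low-rank boundary cases $n = 1$ or $m = 1$ in the quaternionic setting reduce to the same monotonicity argument with one fewer parameter, or, when $n = m = 1$, are trivial. The main obstacle is the case-by-case bookkeeping in step two -- verifying the exact form of the scalar curvature formula and the signs of all coefficients in each of the three cases of $\bK$ (with the outer $C_2$-twist in the complex case, and the low-rank corners) -- but this is routine.
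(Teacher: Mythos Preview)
Your proposal is correct and follows essentially the paper's approach: classify the invariant metrics as an (at most) three-parameter family via the isotypic decomposition of the isotropy representation (this is Lemma~\ref{lem:classification-of-invariant-metrics}), and compute the scalar curvature by iterated O'Neill formulas (Lemma~\ref{lem:curvature-connection-metric-berger}); your schematic formula is exactly the paper's, with $A_1=b_{k,n}$, $A_2=b_{k,m}$, $A_3=c_k$, $A_4=a_{k,m}$, $A_5=a_{k,n}$. The only divergence is in the final elementary step. You use monotonicity in the Hopf-fibre parameter to exhibit the psc locus as an open subgraph over the remaining two parameters; the paper instead first normalizes that parameter to $1$ by global scaling and then, after the substitution $x=1/s$, $y=1/u$, observes that the psc condition reads $q(x,y)-\ell(x,y)<c_k$ with $q$ a positive semidefinite quadratic form and $\ell$ linear, so the solution set is convex (hence contractible once nonemptiness is checked). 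Both arguments are routine; the convexity route has the minor advantage that one does not need to verify separately that the threshold $u_0(s,t)$ is everywhere positive and continuous.
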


The proof of Lemma \ref{lem:contractibility-invariantmetrics} will be quite direct, but a bit lengthy, and we begin with some preliminaries. 
\begin{defn}
The \emph{Berger metric} $g_{s,t}^{k,n}$ on $S^{kn-1}$ with parameters $s,t >0$ is 
\[
 g_{s,t}^{k,n} := s g_{\kp^{n-1}} \oplus_{\omega_n} t g_{S^{k-1}}. 
\]
\end{defn}
The Berger metrics are $G(n)$-invariant, and are the only such metrics (we shall not need this fact explicitly). 
The metric 
\[
s g_{\kp^{n-1}} \oplus_{\omega_n} g_{u,t}^{k,m}
\]
on $M_{k,n,m}$ is $PG(n,m)$-invariant, and similarly is 
\[
s g_{\kp^{m-1}} \oplus_{\omega_m} g_{u,t}^{k,n}.
\]
In fact, it is not hard to see that 
\[
s g_{\kp^{n-1}} \oplus_{\omega_n} g_{u,t}^{k,m} = u g_{\kp^{m-1}} \oplus_{\omega_m} g_{s,t}^{k,n},
\]
but we do not make use of this fact. 

\begin{lem}\label{lem:classification-of-invariant-metrics}
All $PG(n,m)$-invariant metrics on $M_{k,n,m}$ are of the form $s g_{\kp^{n-1}} \oplus_{\omega_n} g_{u,t}^{k,m}$.
\end{lem}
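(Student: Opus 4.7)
The plan is to exploit homogeneity. Since $PG(n,m)$ acts transitively on $M_{k,n,m}$, a $PG(n,m)$-invariant metric is determined by its value at $p_0 := [e_n, e_m]$, namely a $K$-invariant inner product on $T_{p_0} M_{k,n,m}$, where $K$ denotes the stabilizer of $p_0$. The strategy is to identify $K$, decompose the isotropy representation, classify the $K$-invariant inner products on this decomposition, and verify that the family $s g_{\kp^{n-1}} \oplus_{\omega_n} g_{u,t}^{k,m}$ exhausts them.

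An element $(A,B) \in H(n) \times H(m)$ descends to an element of $PG(n,m)$ fixing $p_0$ iff $A = \mathrm{diag}(A', z)$ and $B = \mathrm{diag}(B', z)$ for some $A' \in H(n-1)$, $B' \in H(m-1)$ and $z \in S^{k-1}$. Since the central $S^{k-1} \subset G(n,m)$ acts trivially on $M_{k,n,m}$, the relevant group for the invariant theory on $T_{p_0} M_{k,n,m}$ is $H(n-1) \times H(m-1) \times S^{k-1}$ (the last factor being the diagonal $z$); the additional $C_2$ present in $PG(n,m)$ in the complex case also fixes $p_0$ but its inclusion can only shrink the family of invariant inner products. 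On the tangent side, writing $e_n^\perp = \bK^{n-1} \oplus \Im(\bK)e_n$ and similarly for $e_m^\perp$, and quotienting by the diagonal $\Delta \cong \Im(\bK)$ spanned by the $S^{k-1}$-orbit directions at $(e_n, e_m)$, one obtains
\[
T_{p_0} M_{k,n,m} \;=\; (e_n^\perp \oplus e_m^\perp)/\Delta \;\cong\; \bK^{n-1} \oplus \bK^{m-1} \oplus \Im(\bK),
\]
where the first summand is horizontal for $M_{k,n,m} \to \kp^{n-1}$ and the other two span the fiber tangent space at $e_m \in S^{km-1}$.

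A direct adjoint calculation gives that $H(n-1)$ (resp.\ $H(m-1)$) acts standardly on $\bK^{n-1}$ (resp.\ $\bK^{m-1}$) and trivially on the other summands, while the diagonal $S^{k-1}$ acts on $\bK^{n-1}$ and $\bK^{m-1}$ by right multiplication by $z^{-1}$ and on $\Im(\bK)$ by the adjoint representation. In each case under consideration, $\bK^{n-1}$ is irreducible as a real $H(n-1)$-representation (using that $O(n-1), U(n-1), \Sp(n-1)$ act transitively on unit vectors), and similarly for $\bK^{m-1}$; the $\Im(\bK)$ summand is zero for $k=1$, one-dimensional with trivial action for $k=2$, and $\bR^3$ with the covering $SO(3)$-action for $k=4$. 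Hence each summand admits a one-parameter family of invariant inner products. All cross-terms vanish by Schur's lemma: any $K$-invariant pairing between two distinct summands would, for each fixed vector on one side, restrict to an $H(n-1)$- or $H(m-1)$-equivariant linear form from an irreducible nontrivial representation into a trivial one, and must therefore vanish.

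Finally, unpacking the definitions of the connection metric and the Berger metric shows that at $p_0$ the metric $s g_{\kp^{n-1}} \oplus_{\omega_n} g_{u,t}^{k,m}$ is block-diagonal in the above decomposition, rescaling the standard inner product on $\bK^{n-1}$ by $s$, on $\bK^{m-1}$ by $u$, and on $\Im(\bK)$ by $t$ (with $t$ vacuous when $k=1$); thus varying $(s,u,t)$ over positive reals realizes every invariant inner product classified above. The main obstacle is the bookkeeping in the quaternionic case, where one must carefully distinguish the left $\Sp(n-1)$-action and the right $\Sp(1)$-action on $\bH^{n-1}$, and correctly identify the $\Sp(1)$-conjugation on $\Im(\bH) \cong \bR^3$ as the standard $SO(3)$-action.
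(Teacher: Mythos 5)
Your proposal is correct and follows essentially the same route as the paper: both reduce to classifying $K$-invariant inner products on the isotropy representation at $[e_n,e_m]$, identify the three-summand decomposition $\bK^{n-1}\oplus\bK^{m-1}\oplus\Im(\bK)$ with its explicit actions, establish irreducibility (and pairwise inequivalence) to kill cross-terms by a Schur-type argument, and match the parameter count against the explicit family. The paper phrases the cross-term step via inequivalence of the summands as $K(n,m)$-representations while you fix a vector in the trivially-acted summand and use equivariance of the resulting linear functional, but these are the same argument.
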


\begin{proof}
The map 
\[
q: M_{k,n,m} \to \kp^{n-1} \times \kp^{m-1}
\]
is an $S^{k-1}$-principal bundle and $PG(n,m)$-equivariant. The tangent bundle of $M_{k,n,m}$ can be decomposed into three summands:
\[
TM_{k,n,m} = \mathcal{V}_{ M_{k,n,m}} \oplus q^{*} (T\kp^{n-1}\oplus 0) \oplus q^{*} (0 \oplus T \kp^{m-1}),
\]
and this decomposition is $PG(n,m)$-equivariant. We shall now argue that any $PG(n,m)$-invariant symmetric bilinear form on $TM_{k,n,m}$ is the direct sum of invariant bilinear forms on the three summands, and that each summand carries a unique (up to scalar multiple) such invariant symmetric bilinear form. From this, the claim follows by counting parameters (for that, note that the construction of $s g_{\kp^{n-1}} \oplus_{\omega_n} g_{u,t}^{k,m}$ can be carried out for arbitrary real $s,u,t$ and gives an invariant bilinear form). 

For that goal, pick the basepoint $[e_n,e_m] \in M_{k,n,m}$ and let $K(n,m) \subset PG(n,m)$ be the stabilizer subgroup at $[e_n,e_m]$. Because the action of $PG(n,m)$ on $M_{k,n,m}$ is transitive, $PG(n,m)$-invariant bilinear forms on $M_{k,n,m}$ correspond bijectively to $K(n,m)$-invariant bilinear forms on the tangent space $T_{[e_n,e_m]} M_{k,n,m}$. 
The stabilizer $K(n,m)$ is equal to
\begin{align*}
\{ (A,z,B,z) \in O(n-1)\times O(1) \times O(m-1)\times O(1)  \}/\{\pm 1\} & & \bK=\bR,\\
C_2 \ltimes \{ (A,z,B,z) \in U(n-1)\times U(1) \times U(m-1)\times U(1)  \}/U(1) & & \bK=\bC,\\
\{ (A,z,B,z) \in \Sp(n-1)\times \Sp(1) \times \Sp(m-1)\times \Sp(1)  \} / \{\pm 1\} & &\bK=\bH.
\end{align*}

The tangent space $T_{(e_n,e_m)} (S^{kn-1} \times S^{km-1})$ can be identified with 
\[
\bK^{n-1} \oplus \Im (\bK) \oplus \bK^{m-1} \oplus \Im (\bK),
\]
and similarly, we have 
\begin{equation}\label{eq:tangent-space-spherebundle-decomposition}
T_{[e_n,e_m]} (S^{kn-1} \times_{S^{k-1}} S^{km-1})=\bK^{n-1} \oplus \bK^{m-1} \oplus \Im (\bK). 
\end{equation}
This identification is $K(n,m)$-equivariant, where the action on the right hand side is as follows. 
\begin{enumerate}
\item If $\bK=\bR$, we have 
\[
T_{[e_n,e_m]} (S^{n-1} \times_{S^{0}} S^{m-1})=\bR^{n-1} \oplus \bR^{m-1} ,
\]
and the action of $[A,z,B,z] \in K(n,m)$ is by $L_A R_z$ in the first summand, and by $L_B R_z$ in the second one. The two summands are irreducible inequivalent representations of $O(n-1) \times 1 \times O(m-1) \times 1 \subset K(n,m)$. 
\item If $\bK= \bC$, we have 
\[
T_{[e_n,e_m]} (S^{2n-1} \times_{S^1} S^{2m-1})=\bC^{n-1} \oplus \bC^{m-1}  \oplus \Im (\bC). 
\]
The action of $[A,z,B,z] \in K(n,m)$ is by $L_A R_{\overline{z}}$ in the first summand, by $L_B R_{\overline{z}}$ in the second one, and by the identity on the third. The action of the complex conjugation $\kappa \in K(n,m)$ is by complex conjugation in all three summands. Again, the decomposition \eqref{eq:tangent-space-spherebundle-decomposition} is a decomposition into irreducible, pairwise inequivalent $K(n,m)$-representations. 
\item If $\bK=\bH$, we have 
\[
T_{[e_n,e_m]} (S^{4n-1} \times_{S^3} S^{4m-1})=\bH^{n-1} \oplus \bH^{m-1} \oplus \Im (\bH). 
\]
The action of $[A,z,B,z] \in K(n,m)$ is by $L_A R_{\overline{z}}$ in the first summand, by $L_B R_{\overline{z}}$ in the second summand, and by $\Ad (z)$ in the third one. Again, this is a decomposition into pairwise inequivalent representations. 
\end{enumerate}
Since the summands in the decomposition \eqref{eq:tangent-space-spherebundle-decomposition} are irreducible and inequivalent $K(n,m)$-representations, it follows that all invariant bilinear forms are given by direct sums of invariant forms on the three summands, and the forms on the summands are unique up to scalar multiplication. Hence the (real) dimension $d_{k,n,m}$ of the space of $PG(n,m)$-invariant bilinear forms on $M_{k,n,m}$ is equal to 
\[
d_{k,n,m}= 
\begin{cases}
0 & \bK=\bR, \, n=m=1\\
1 & \bK=\bR, \, \text{either} \, n = 1 \,\text{ or }\, m=1\\
2 & \bK=\bR, \, n,m \geq 2\\
1 & \bK=\bC, \bH,\, n=m=1\\
2 & \bK=\bC,\bH, \, \text{either} \, n = 1 \,\text{ or }\, m=1\\
3 & \bK=\bC, \bH,\, n,m \geq 2.\\
\end{cases}
\]
A parameter count shows that this is the same as the number of parameters in $s g_{\kp^{n-1}} \oplus_{\omega_n} g_{u,t}^{k,m}$.
\end{proof}

What remains to be proven is that 
\[
\{ (s,u,t) \in (0,\infty)^3 \vert \scal (s g_{\kp^{n-1}} \oplus_{\omega_n} g_{u,t}^{k,m}) >0\} \subset \bR^3
\]
is contractible. To write down a formula for $\scal (s g_{\kp^{n-1}} \oplus_{\omega_n} g_{u,t}^{k,m})$, we fix some more notation. We define
\begin{equation}\label{eqn:scalarcurvature-spheres}
c_n := \scal (g_{S^{n-1}}) = (n-1)(n-2)
\end{equation}
and 
\begin{equation}\label{eqn:scalar-curvature-projectivespace}
b_{k,n} := \scal (g_{\kp^{n-1}}) = 
 \begin{cases}
  c_n & \bK=\bR,\\
  4n(n-1) & \bK=\bC,\\
  16 (n^2-1) & \bK=\bH  
 \end{cases}
\end{equation}
(for the last one, see \cite{Gray}). Furthermore, we let $a_{k,n}$ be the (constant) value of the quantity $|A|^2$ (see \eqref{eq:scalarcurvature-of-connectionmetric}) for the Riemannian submersion $(S^{kn-1},g_{S^{kn-1}}) \to (\kp^{n-1}, g_{\kp^{n-1}})$. Using \eqref{eq:scalarcurvature-of-connectionmetric}, we find
\[
 c_{kn}=\scal( g_{S^{kn-1}})= \scal (g_{\kp^{n-1}} \oplus_{\omega_n} g_{S^{k-1}}) = b_{k,n} + c_k - a_{k,n}, 
\]
from which we may evaluate
\begin{equation}\label{eqn:A-torsor-hopf-bundles}
a_{k,n}=
\begin{cases}
0 & \bK=\bR,\\
2 (n-1) & \bK=\bC,\\
12 (n-1) & \bK=\bH.
\end{cases}
\end{equation}

\begin{lem}\label{lem:curvature-connection-metric-berger}
With these notations, we have
\[
 \scal ( g_{s,t}^{k,n} ) = \frac{1}{s} b_{k,n} + \frac{1}{t} c_k - \frac{t}{s^2} a_{k,n}
\]
and
\[
\scal (sg_{\kp^{n-1}} \oplus_{\omega_n} g_{u,t}^{k,m}) =    \frac{1}{s}   b_{k,n}+ \frac{1}{u} b_{k,m} + \frac{1}{t} c_k - \frac{t}{u^2} a_{k,m} - \frac{t}{s^2} a_{k,n} .
\]
\end{lem}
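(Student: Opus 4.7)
The first identity is immediate from the scaling formula \eqref{eq:scalarcurvature-scaling-identity2}: by construction $g_{s,t}^{k,n} = sg_{\kp^{n-1}} \oplus_{\omega_n} tg_{S^{k-1}}$, and one has $\scal(g_{\kp^{n-1}}) = b_{k,n}$, $\scal(g_{S^{k-1}}) = c_k$, while $|A|^2 = a_{k,n}$ by the very definition of $a_{k,n}$.

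For the second identity I would apply the O'Neill-type formula \eqref{eq:scalarcurvature-of-connectionmetric} directly to $g := sg_{\kp^{n-1}} \oplus_{\omega_n} g_{u,t}^{k,m}$, viewed as a connection metric with base $(\kp^{n-1}, sg_{\kp^{n-1}})$ and fiber $(S^{km-1}, g_{u,t}^{k,m})$. This expresses
\[
\scal(g) = \frac{1}{s}\,b_{k,n} + \scal(g_{u,t}^{k,m}) - |A|^2_g,
\]
with the middle term already furnished by the first identity. Everything therefore reduces to the identification $|A|^2_g = \frac{t}{s^2}\,a_{k,n}$, which I expect to be the main obstacle because the fiber metric $g_{u,t}^{k,m}$ is not a uniform rescaling, so \eqref{eq:scalarcurvature-scaling-identity2} cannot be invoked as a black box.

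To establish it, I would combine two observations. First, by \eqref{eqn:computation-A-ternsor}, for horizontal $x,y$ at a point $[p,q]\in M_{k,n,m}$ the vector $A_x y = -\rho(\Omega(x_*,y_*))_q$ lies in the $S^{k-1}$-orbit direction at $q$, hence in the Hopf-vertical distribution of $S^{km-1}\to\kp^{m-1}$. Since $g_{u,t}^{k,m}= ug_{\kp^{m-1}} \oplus_{\omega_m} tg_{S^{k-1}}$ agrees with $tg_{S^{km-1}}$ on Hopf-vertical vectors, this yields $\norm{A_x y}^2_{g_{u,t}^{k,m}} = t\,\norm{A_x y}^2_{g_{S^{km-1}}}$ for all horizontal $x,y$. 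Second, the standard rescaling argument (an orthonormal horizontal frame for $sg_{\kp^{n-1}}$ is obtained from one for $g_{\kp^{n-1}}$ by the factor $1/\sqrt{s}$, and $A_x y$ is bilinear in $x,y$) contributes an overall factor $1/s^2$ in $|A|^2$. Together these give $|A|^2_g = \frac{t}{s^2}\,|A|^2_{(g_{\kp^{n-1}},g_{S^{km-1}})}$.

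It remains to identify the $|A|^2$ of the bundle $M_{k,n,m}\to\kp^{n-1}$ with round-like metrics with the $|A|^2$ of the Hopf bundle $S^{kn-1}\to\kp^{n-1}$, which is $a_{k,n}$ by definition. Both $A$-tensors are computed from the same curvature form $\Omega$ of $\omega_n$ via \eqref{eqn:computation-A-ternsor}, the only difference being that $\rho$ denotes the right $S^{k-1}$-action on $S^{km-1}\subset\bK^m$ instead of on $S^{k-1}\subset\bK$. For $q\in S^{km-1}$ and $X\in\mathrm{Im}(\bK)$, however, $|qX|^2 = |q|^2|X|^2 = |X|^2$, exactly as for $q\in S^{k-1}$. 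So the pointwise squared norms $\norm{A_x y}^2$ match those of the Hopf bundle, which yields $|A|^2_{(g_{\kp^{n-1}},g_{S^{km-1}})} = a_{k,n}$ and hence the claimed formula once the four terms are assembled.
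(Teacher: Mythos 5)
Your proof is correct and takes essentially the same route as the paper's: both reduce the second identity to showing that the $A$-tensor contribution is $\frac{t}{s^2}a_{k,n}$, using that $A_x y$ lies in the $S^{k-1}$-orbit direction of the fibre (so only the $t$-scaled Hopf-vertical part of $g_{u,t}^{k,m}$ matters) and that $\lvert qX\rvert = \lvert X\rvert$ for $q\in S^{km-1}$, $X\in\mathrm{Im}(\bK)$, making the norm independent of $m$. The only cosmetic difference is that you fold the $s$-scaling directly into the $\lvert A\rvert^2$ computation, whereas the paper first treats $s=1$ and then applies a global scaling at the end.
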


\begin{proof}
The formula for the scalar curvature of the Berger metric is a straightforward consequence of \eqref{eq:scalarcurvature-scaling-identity2}. To get the second formula, let $a_{k,n,m,u,t}$ be the (constant) value of $|A|^2$ for the connection metric $g_{\kp^{n-1}} \oplus_{\omega_n} g_{u,t}^{k,m}$. This can be computed with the aid of the formula \eqref{eqn:computation-A-ternsor}. Because the action vector field of the $S^{k-1}$-action on $S^{km-1}$ lies in the vertical tangent bundle $\mathcal{V} S^{km-1}$ which is scaled by the factor $t$, the quantity $a_{k,n,m,u,t}$ does not depend on $u$; the dependence on $t$ is expressed by the formula
\[
 a_{k,n,m,u,t} = t a_{k,n,m,1,1}. 
\]
Moreover, if $S^{km-1}$ carries the round metric, the norm of the action vector field by an element in the Lie algebra $\Im (\bK)$ of $S^{k-1}$ agrees with the norm of the action vector field on $S^{k-1}$ (for $p \in S^{km-1}$ and $X \in \Im (\bK)$, the curve $t \mapsto p\exp(tX)$ is a closed geodesic with length independent of $m$). Therefore, $a_{k,n,m,1,1}$ does not depend on $m$ and is given by $a_{k,n,m,1,1} = a_{k,n}$. This shows
\[
a_{k,n,m,u,t} = t a_{k,n },
\]
and we conclude
\[
 \scal (g_{\kp^{n-1}} \oplus_{\omega_n} g_{u,t}^{k,m}) =  b_{k,n}+ \frac{1}{u} b_{k,m} + \frac{1}{t} c_k - \frac{t}{u^2} a_{k,m} - t a_{k,n}.
\]
By global scaling with $\frac{1}{s}$, we obtain the formula for $\scal (sg_{\kp^{n-1}} \oplus_{\omega_n} g_{u,t}^{k,m})$, using \eqref{eq:scalarcurvature-scaling-identity2}.
\end{proof}

\begin{proof}[Proof of Lemma \ref{lem:contractibility-invariantmetrics}]
Each $PG(n,m)$-invariant metric can be written as $sg_{\kp^{n-1}} \oplus_{\omega_n} g_{u,t}^{k,m}$, by Lemma \ref{lem:classification-of-invariant-metrics}. By global scaling and Lemma \ref{lem:curvature-connection-metric-berger}, the space of invariant psc metrics is homotopy equivalent to the space of all $(s,u) \in (0,\infty)^2$ such that 
\begin{equation}\label{eq.scalar-curvature-invariant}
\frac{1}{s}   b_{k,n}+ \frac{1}{u} b_{k,m} +  c_k - \frac{1}{u^2} a_{k,m} - \frac{1}{s^2} a_{k,n}>0 .
\end{equation}
In the cases excluded in the statement of the Lemma, the left hand side of \eqref{eq.scalar-curvature-invariant} is always $0$. If $\bK=\bR$, we have $c_1 = a_{1,n}= 0 $, and hence the left hand side of \eqref{eq.scalar-curvature-invariant} is equal to $\frac{1}{s}   b_{k,n}+ \frac{1}{u} b_{k,m} $, which is always positive. 

For the remaining cases, change coordinates to $x := \frac{1}{s}$ and $y: = \frac{1}{u}$, and let 
\[
 q(x,y) :=  a_{k,n} x^2+  a_{k,m} y^2,
\]
\[
 c := c_k \geq 0
\]
and 
\[
 \ell (x,y) :=   b_{k,n} x+ b_{k,m} y. 
\]
We have to look at the set of all $(x,y) \in (0,\infty)^2$ such that 
\begin{equation}\label{eq:inequality}
q(x,y) - \ell(x,y) < c.
\end{equation}
Now $q$ is a positive semidefinite quadratic form and $\ell$ is a linear form. Hence $q-\ell$ is a convex function, and so the set of solutions to the inequality \eqref{eq:inequality} is convex. It is also nonempty: if $\bK=\bH$, $c>0$ and so \eqref{eq:inequality} is satisfied for sufficiently small $(x,y)$. If $\bK=\bC$, $c=0$, we find solutions of \eqref{eq:inequality}, provided that one of $b_{k,n} $, $ b_{k,m}$ is positive, which is the case unless $n=m=1$. 
\end{proof}

This concludes the proof of Proposition \ref{prop:projectivebundles-decomposition}. 

\section{The computation in the spin case}\label{sec:computationspin}

\subsection{The simply connected case}

In this section, we finish the proof of Theorem \ref{MainThm:spin}. 

\begin{prop}\label{prop:reducingstructuregroup}
The map $\pi: B P(\Sp(2) \times \Sp(1)) \to B P \Sp(3)$ induces a surjective map
\[
\pi_*: \Omega_*^{\Spin}(B P(\Sp(2) \times \Sp(1))) \to \Omega_*^\Spin( B P \Sp(3)).
\]
\end{prop}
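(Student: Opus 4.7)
My plan is to establish surjectivity of $\pi_*$ by a Leray--Hirsch argument. I will construct an integral cohomology class $\tilde x \in H^4(BP(\Sp(2)\times\Sp(1));\bZ)$ that restricts to a generator of $H^4(\hp^2;\bZ)$, deduce a free module decomposition of integral cohomology, and then propagate the resulting surjectivity to $\MSpin$-homology.

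The map $\pi$ is a smooth fibre bundle with fibre $\hp^2 = P\Sp(3)/P(\Sp(2)\times\Sp(1))$, and its fibrewise tangent bundle $T^v$ is the oriented real rank-$8$ bundle associated to the isotropy representation $\rho\colon P(\Sp(2)\times\Sp(1))\to\SO(8)$ on $\bH^2$, namely $[A,z]\cdot b = A b z^{-1}$. I claim that $\rho$ lifts through $\Spin(8)\to\SO(8)$, so that $T^v$ inherits a spin structure. Indeed, a generator of $\pi_1(P(\Sp(2)\times\Sp(1)))\cong\bZ/2$ is represented by the loop $t\mapsto [e^{i\pi t}I_2, e^{i\pi t}]$ for $t\in[0,1]$, which under $\rho$ becomes $b\mapsto e^{i\pi t} b e^{-i\pi t}$ on each $\bH$-summand of $\bH^2$, i.e.\ a $2\pi$-rotation in the $(j,k)$-plane of each summand. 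In $\pi_1(\SO(8))\cong\bZ/2$ this represents the sum of two generators and hence equals $0$. Consequently $\tilde x := \tfrac12 p_1(T^v)\in H^4(BP(\Sp(2)\times\Sp(1));\bZ)$ is a well-defined integral class. Its restriction to the fibre is $\tfrac12 p_1(T\hp^2)$; combining $\sigma(\hp^2)=1$, $\ahat(\hp^2)=0$, and the dimension-$8$ formulas $\sigma = (7 p_2 - p_1^2)/45$ and $\ahat_2 = (7 p_1^2 - 4 p_2)/5760$, one finds $p_1(T\hp^2)^2[\hp^2] = 4$, hence $p_1(T\hp^2) = \pm 2 x$ for $x\in H^4(\hp^2;\bZ)$ a generator. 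Thus $\tilde x$ restricts to $\pm x$, and $\{1,\tilde x,\tilde x^2\}$ restricts to a $\bZ$-basis of $H^*(\hp^2;\bZ)$.

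By the Leray--Hirsch theorem, the integral Serre spectral sequence of $\pi$ collapses at $E_2$ and
\[
 H^*(BP(\Sp(2)\times\Sp(1));\bZ) \;\cong\; H^*(BP\Sp(3);\bZ)\cdot\{1,\tilde x,\tilde x^2\}
\]
as $H^*(BP\Sp(3);\bZ)$-modules. Dually, $\pi_*$ is split surjective on integral singular homology, and on $H_*(-;A)$ for any coefficient group $A$. To upgrade to $\MSpin$-homology, I would lift $\tilde x$ to a class in $\MSpin^4(BP(\Sp(2)\times\Sp(1)))$ via the canonical $\MSpin^4$-refinement of $\tfrac12 p_1$ available on $B\Spin$. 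The Leray--Hirsch theorem applied in $\MSpin$-cohomology then gives a free rank-$3$ decomposition of $\MSpin^*(BP(\Sp(2)\times\Sp(1)))$ over $\MSpin^*(BP\Sp(3))$, and dually $\Omega_*^\Spin(BP(\Sp(2)\times\Sp(1)))$ is free of rank $3$ over $\Omega_*^\Spin(BP\Sp(3))$; projection to the $\{1\}$-summand provides a right inverse to $\pi_*$, establishing the desired surjectivity. The main technical step requiring care is the lift of $\tilde x$ to $\MSpin$-cohomology together with the verification of Leray--Hirsch in that setting; this reduces to controlling Atiyah--Hirzebruch spectral sequence obstructions, which are tractable thanks to the spin structure on $T^v$ and the concentration of $H^*(\hp^2;\bZ)$ in degrees divisible by $4$.
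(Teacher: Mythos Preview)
Your integral Leray--Hirsch argument is correct and indeed shows that $\pi_*$ is split surjective on $H_*(-;A)$ for every abelian group $A$; this is essentially the same information as the collapse of the Serre spectral sequence that the paper uses. The real issue is the passage from ordinary homology to $\MSpin$-homology, and here your argument has a genuine gap.

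There is no ``canonical $\MSpin^4$-refinement of $\tfrac12 p_1$ on $B\Spin$'' in the sense you invoke, and lifting $\tilde x$ through the Atiyah--Hirzebruch spectral sequence is not automatic. Since $\tilde x \bmod 2 = w_4(T^v)$, the first differential is $d_2(\tilde x)=Sq^2 w_4(T^v)=w_6(T^v)$ (using $w_2(T^v)=0$), and you would have to prove this vanishes in $H^6(BP(\Sp(2)\times\Sp(1));\bF_2)$; note that this group is nonzero (the space has $\pi_2\cong\bZ/2$, and the double cover $B(\Sp(2)\times\Sp(1))$ kills everything in degree~6, so the pullback gives no information). Higher differentials would then need to be checked as well. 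Even granting a degree-$4$ lift, your degree-$8$ class needs separate attention: the $\MSpin$-Euler class of $T^v$ restricts to $3$ times the generator of $H^8(\hp^2)$, not to a generator, so $\{1,\tilde x,\tilde x^2\}$ as stated will not produce a free $\MSpin^*$-basis of $\MSpin^*(\hp^2)$ on the nose.

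The paper avoids all of this by working one prime at a time. At the prime~$2$ it uses the Becker--Gottlieb transfer for the bundle $\pi$: since $\chi(\hp^2)=3$ is a $2$-local unit, $\pi\circ\trf_\pi$ is an equivalence on $H_*(-;\bZ_{(2)})$, and a straightforward Atiyah--Hirzebruch argument promotes this to split surjectivity of $\pi_*$ on any $\bZ_{(2)}$-local homology theory, in particular $\Omega^\Spin_*(-)\otimes\bZ_{(2)}$. After inverting~$2$, the covers $B\Sp(3)\to BP\Sp(3)$ and $B(\Sp(2)\times\Sp(1))\to BP(\Sp(2)\times\Sp(1))$ become homology equivalences; since both $H_*(B\Sp(n);\bZ)$ and $\Omega^\Spin_*\otimes\bZ[\tfrac12]$ are concentrated in degrees divisible by~$4$, the Atiyah--Hirzebruch spectral sequences collapse for degree reasons, and surjectivity on $E^2$ (which follows from the Serre spectral sequence collapse, i.e.\ your Leray--Hirsch input) yields surjectivity on $\Omega^\Spin_*(-)\otimes\bZ[\tfrac12]$. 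This route is both shorter and free of the obstructions above.
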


Let us postpone the proof for a moment. 

\begin{proof}[Proof of Theorem \ref{MainThm:spin}, assuming Proposition \ref{prop:reducingstructuregroup}]
We first have to recall some aspects of the proof of Stolz' theorem \cite{Stolz} (that a simply connected spin manifold of dimension $d \geq 5$ admits a psc metric if $\ahat(M)=0$). The Atiyah--Bott--Shapiro orientation of spin vector bundles gives rise to a map
\[
 \ahat: \MSpin \to \ko
\]
of ring spectra, from the spin cobordism spectrum to the connective real $K$-theory spectrum. On the level of homotopy groups, $\ahat$ induces
\[
 \Omega_d^\Spin \to \ko_d
\]
which assigns to a spin manifold the index of the spin Dirac operator, by the Atiyah--Singer index theorem. The Lichnerowicz formula implies that $\ahat(M)=0$ is a necessary condition for the existence of a psc metric on $M$. Now let 
\[
 T: \Omega_{d-8}^\Spin (B P \Sp(3)) \to \Omega_d^\Spin 
\]
be the map that assigns to a map $f:B \to P \Sp(3)$ the total space of the $\hp^2$-bundle classified by $f$. The composition $\ahat \circ T$ is zero, since total spaces of $\hp^2$-bundles admit psc metrics. The key result of \cite{Stolz} is that the sequence
\begin{equation}\label{eqn:stolzkrecksequence}
 \Omega^\Spin_{d-8}(B P \Sp(3)) \stackrel{T}{\to} \Omega^\Spin_d \stackrel{\ahat}{\to} \ko_d 
\end{equation}
is exact (it is \emph{not} part of a long exact sequence). To give credits properly, Stolz \cite[Theorem 1.2]{Stolz} proved exactness of \eqref{eqn:stolzkrecksequence} after localization at the prime $2$. After inverting $2$, this was shown later by Kreck and Stolz \cite[Proposition 3.3]{KreckStolz}. 

Now let $M^d$ be a spin manifold which admits a psc metric, $d \geq 5$. Then $\ahat(M)=0$, and so by the exactness of \eqref{eqn:stolzkrecksequence}, $[M] \in \Omega^\Spin_d$ contains the total space of an $\hp^2$-bundle with structure group $P \Sp (3)$. By Proposition \ref{prop:reducingstructuregroup}, $[M]$ contains the total space of an $\hp^2$-bundle with structure group $P (\Sp(2) \times \Sp(1))$. 
By Theorems \ref{thm:splitting-of-projective-bundles} and \ref{thm:admissiblesplittingleadstohomotopyinvariance}, it follows that $\Riem^+ (M) \simeq \Riem^+ (S^d)$, as claimed, if in addition \(M\) is simply connected. 
\end{proof}

\begin{proof}[Proof of Proposition \ref{prop:reducingstructuregroup}] 
We shall prove separately that $\pi_* \otimes \bZ_{(2)}$ and $\pi_* \otimes \bZ[\frac{1}{2}]$ are surjective. We first turn to the $2$-local case. Because 
\begin{equation}\label{eqn:hptwohomogeneous}
\hp^2 = P\Sp(3)/ P (\Sp(2)\times \Sp(1)),
\end{equation}
 $\pi$ is homotopy equivalent to  a smooth fibre bundle with fibre $\hp^2$. In this situation, Becker and Gottlieb \cite{BeckerGottlieb} constructed a transfer map
\[
 \trf_\pi: \Sigma^\infty  B P \Sp(3)_+ \to \Sigma^\infty B P(\Sp(2) \times \Sp(1))_+
\]
on the level of suspension spectra, and they showed that the composition $\pi \circ \trf_\pi: \Sigma^\infty B P \Sp(3)_+ \to  \Sigma^\infty B P \Sp(3)_+$ induces multiplication by the Euler number $\chi(\hp^2)=3 \in \bZ_{(2)}^\times$ on (ordinary) homology. If $A_*$ is a generalized homology theory whose coefficient groups are $\bZ_{(2)}$-modules, a straightforward argument with the Atiyah--Hirzebruch spectral sequence proves from this fact that $\pi_*:A_* (B P(\Sp(2) \times \Sp(1))) \to A_* ( B P \Sp(3))$ is (split) surjective. Apply that statement to $A_* := \Omega_*^\Spin (\_) \otimes \bZ_{(2)}$. There was nothing special about spin cobordism in this argument. 

For the consideration after inverting $2$, some more ingredients are needed. Firstly, the map $B \Sp(3) \to B P \Sp(3)$ induces an isomorphism in homology with coefficients in $\bZ[\frac{1}{2}]$, and hence also in spin bordism tensored with $\bZ[\frac{1}{2}]$. The same argument applies to $B (\Sp(2) \times \Sp(1)) \to B P (\Sp(2) \times \Sp(1))$. Therefore, we only need to prove that 
\begin{equation}\label{eqn:surjetive-invert2}
\Omega^\Spin_* (B (\Sp(2) \times \Sp(1))) \otimes \bZ[\frac{1}{2}] \to \Omega^\Spin_* (B \Sp(3)) \otimes \bZ[\frac{1}{2}]
\end{equation}
is surjective. It is well-known that the integral homology of $B \Sp (n)$ is torsionfree and concentrated in degrees divisible by $4$, see e.g. \cite[Theorem III.5.6]{MimuraToda}. Moreover, the map $B (\Sp(2) \times \Sp(1)) \to B \Sp(3)$ induces an epimorphism in homology with arbitrary coefficients. This is because the homotopy fibre is $\hp^2$; hence the Leray--Serre spectral sequence collapses (for degree reasons) at the $E^2$-page.

We also need to know a fact about spin cobordism from Anderson, Brown and Peterson's work \cite{AnBrPet}. Namely, Theorem 2.2 of that paper implies that $\Omega_*^\Spin \otimes \bZ[\frac{1}{2}]$ is concentrated in degrees divisible by $4$ (this can also be deduced from the structure of $\Omega^{\SO}_*$ described in Theorem \ref{thm:structuremso1} below, together with the fact that the forgetful map $\Omega^{\Spin}_* \otimes \bZ[\frac{1}{2}]  \to \Omega^{\SO}_* \otimes \bZ[\frac{1}{2}] $ is an isomorphism, which follows from Serre class theory). Therefore, the Atiyah--Hirzebruch spectral sequence
\[
 E^2_{p,q} = H_p (B \Sp(3);\Omega_q^\Spin \otimes \bZ[\frac{1}{2}]) \Rightarrow \Omega^\Spin_{p+q}(B\Sp(3)) \otimes \bZ[\frac{1}{2}] 
\]
and its analogue $\bar{E}^2_{p,q}$ for $B( \Sp(2) \times \Sp(1))$ collapse at the $E^2$-page. We have argued that the comparison map $\bar{E}^2_{p,q} \to E^2_{p,q}$ is surjective, and since the spectral sequences collapse, surjectivity on the $E^\infty$-page follows. Finally, an argument with the $5$-lemma proves that \eqref{eqn:surjetive-invert2} is surjective, as claimed.
\end{proof}

\subsection{Generalization to some nontrivial fundamental groups}

If one tries to carry over the above argument for Theorem \ref{MainThm:spin} to the case of spin manifolds with a given fundamental group $\Gamma$, one runs into formidable difficulties. These difficulties are very similar to that which are met when studying the Gromov--Lawson--Rosenberg conjecture, which has counterexamples for rather innocent groups \cite{Schick}. But at least in some interesting cases, our argument goes through. We have not attempted a more systematic study, though. 

\begin{thm}\label{thm:freeandfreeabelian}
Let $\Gamma$ be a finitely presented torsion-free group, and assume the following:
\begin{enumerate}
\item the classifying space $B \Gamma$ splits stably as a wedge of sphere spectra, in other words, $\Sigma^\infty B \Gamma_+ \simeq \bigvee_{i \in I}\Sigma^{n_i} \mathbb{S}$,
\item $\Gamma$ satisfies the real strong Novikov conjecture, in other words, the assembly map $\mu:\KO_* (B\Gamma) \to \KO_* (\cstar(\Gamma))$ is injective. 
\end{enumerate}
Then if $M_0,M_1$ are two closed spin manifolds of dimension $d \geq 5$ with fundamental group $\Gamma$ and $\Riem^+ (M_i)\neq \emptyset$, we have
\[
\Riem^+ (M_0) \simeq \Riem^+ (M_1). 
\]
\end{thm}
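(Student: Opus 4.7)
The approach is to apply Theorem \ref{thm:admissiblesplittingleadstohomotopyinvariance} with the fibration $\xi:B\Spin\times B\Gamma\to BO$. A closed connected spin $d$-manifold $M$ with $\pi_1(M)=\Gamma$, equipped with the classifying map $M\to B\Gamma$ of its universal cover, is automatically a relatively $2$-connected $\xi$-manifold: since $\Gamma$ is discrete, $\pi_2(B\Spin\times B\Gamma)=0$, so the $\pi_2$-condition is vacuous, while the $\pi_0$- and $\pi_1$-conditions hold by construction. Setting $K=M_1$ and $L=M_0$, it thus suffices to realize $x:=[M_0]-[M_1]\in\Omega_d^\xi=\Omega_d^\Spin(B\Gamma)$ as the cobordism class of the total space of an $\hp^2$-bundle with structure group $P(\Sp(2)\times\Sp(1))$ over a $\xi$-base, since by Theorem \ref{thm:splitting-of-projective-bundles} such a total space has an admissible splitting.

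To produce such a representative we extend both the Stolz exact sequence \eqref{eqn:stolzkrecksequence} and Proposition \ref{prop:reducingstructuregroup} to the $B\Gamma$-parametrized setting; this is where hypothesis (1) enters. The stable splitting $\Sigma^\infty B\Gamma_+\simeq\bigvee_i\Sigma^{n_i}\bS$ (with each $n_i\geq 0$ since $B\Gamma$ is connected) induces, for every spectrum $E$, an isomorphism $E_*(B\Gamma)\cong\bigoplus_i\pi_{*-n_i}(E)$, and every spectrum-level map smashed with the identity on $B\Gamma_+$ acts summand-wise. Applied to the spectrum maps underlying $T$, $\ahat$, and the structure-group-reduction map of Proposition \ref{prop:reducingstructuregroup}, the original exactness and surjectivity statements, one for each $n_i$, assemble into exactness of
\[
\Omega_{d-8}^\Spin(BP\Sp(3)\times B\Gamma)\xrightarrow{T}\Omega_d^\Spin(B\Gamma)\xrightarrow{\ahat}\ko_d(B\Gamma)
\]
and surjectivity of $\Omega_*^\Spin(BP(\Sp(2)\times\Sp(1))\times B\Gamma)\to\Omega_*^\Spin(BP\Sp(3)\times B\Gamma)$.

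It remains to show that $\ahat([M_i])=0$ in $\ko_d(B\Gamma)$; here hypothesis (2) is used. By the Lichnerowicz formula, the Rosenberg index
\[
\Omega_d^\Spin(B\Gamma)\xrightarrow{\ahat}\ko_d(B\Gamma)\to\KO_d(B\Gamma)\xrightarrow{\mu}\KO_d(\cstar(\Gamma))
\]
vanishes on $[M_i]$. The assembly $\mu$ is injective by hypothesis (2). The periodization $\ko_d(B\Gamma)\to\KO_d(B\Gamma)$ is injective as well: under the splitting it becomes the summand-wise map $\bigoplus_i\ko_{d-n_i}\to\bigoplus_i\KO_{d-n_i}$, each summand of which is either an isomorphism (when $d-n_i\geq 0$) or has trivial source (when $d-n_i<0$). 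Therefore $\ahat([M_i])=0$, so $x\in\ker\ahat=\im T$. Lifting a preimage of $x$ to $\Omega_{d-8}^\Spin(BP(\Sp(2)\times\Sp(1))\times B\Gamma)$ via the extended surjection and taking the total space of the associated $\hp^2$-bundle yields the desired admissible representative, and Theorem \ref{thm:admissiblesplittingleadstohomotopyinvariance} finishes the argument.

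The real content is the interplay between the two hypotheses. The stable splitting of $B\Gamma_+$ is what allows the Stolz sequence and Proposition \ref{prop:reducingstructuregroup} to pass verbatim from a point to $B\Gamma$, and is also what makes the periodization $\ko_d(B\Gamma)\to\KO_d(B\Gamma)$ injective. The real strong Novikov conjecture is what converts the a priori analytic vanishing of the $\cstar(\Gamma)$-valued index into the purely homotopical identity $\ahat([M])=0$ in $\ko_d(B\Gamma)$, which is what feeds $[M]$ into the extended exact sequence. Relaxing either hypothesis would force one to confront the full difficulty of the Gromov--Lawson--Rosenberg conjecture, and is the reason a more systematic generalization is not attempted.
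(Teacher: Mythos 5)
Your proposal is correct and follows essentially the same route as the paper. Both extend the Kreck--Stolz exact sequence and Proposition \ref{prop:reducingstructuregroup} to the $B\Gamma$-parametrized setting via the stable wedge-of-spheres splitting, use the real strong Novikov conjecture to convert the Lichnerowicz vanishing into $\ahat([M_i]) = 0$, and feed the resulting $\hp^2$-bundle representative into Theorems \ref{thm:splitting-of-projective-bundles} and \ref{thm:admissiblesplittingleadstohomotopyinvariance}. The only (inessential) difference is in the order of operations: the paper first replaces $\ko$ by $\KO$ at the coefficient level using injectivity of $\ko_* \to \KO_*$ and only then smashes with $B\Gamma_+$, whereas you smash first and separately verify that the periodization $\ko_d(B\Gamma) \to \KO_d(B\Gamma)$ remains injective summand-by-summand under the splitting --- an equivalent bookkeeping choice.
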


\begin{examples}
\begin{enumerate}
\item The free abelian group $\bZ^n$ satisfies all requirements. The stable splitting condition is clear, and free abelian groups even satisfy the Baum--Connes conjecture (in the complex case), which implies the real strong Novikov conjecture by \cite{SchickKKK}. 
\item Similarly, the free group $F_n$ satisfies the hypotheses of the Theorem (free groups satisfy the Baum--Connes conjecture by \cite{HigsonKasparov} because they have the Haagerup property). 
\end{enumerate}
\end{examples}

\begin{proof}
We have to prove that the cobordism class $[M_i] \in \Omega_d^\Spin (B \Gamma)$ contains a representative which is an $\hp^2$-bundle with structure group $H:= P(\Sp(2) \times \Sp(1))$.

The hypothesis that $\Sigma^\infty B \Gamma_+ \simeq \bigvee_{i \in I}\Sigma^{n_i} \mathbb{S}$ has the effect that 
\[
A_k (B \Gamma) = \bigoplus_{i \in I} A_{k-n_i} (*)
\]
for each homology theory $A_*$, and this sum decomposition is natural in $A_*$. Kreck--Stolz' sequence \eqref{eqn:stolzkrecksequence}, together with Proposition \ref{prop:reducingstructuregroup} and the (obvious) injectivity of $\ko_* \to \KO_*$ yields an exact sequence 
\[
\Omega_{*-8}^\Spin (B P(\Sp(2) \times \Sp(1))) \to \Omega_*^\Spin \to \KO_*
\]
whose maps are induced by map of spectra. The induced sequence 
\[
\Omega_{*-8}^\Spin (B P(\Sp(2) \times \Sp(1)) \times B \Gamma) \to \Omega_*^\Spin (B \Gamma)\to \KO_* (B \Gamma)
\]
is therefore exact, by the above observation. Assuming that $\Gamma$ satisfies the real strong Novikov conjecture gives an exact sequence 
\begin{equation}\label{eqn:pione-sequence}
\Omega_{*-8}^\Spin (B P(\Sp(2) \times \Sp(1)) \times B \Gamma) \stackrel{T}{\to} \Omega_*^\Spin (B \Gamma)\stackrel{\ind_\Gamma}{\to} \KO_* (C^*_r (\Gamma)).
\end{equation}
The map $T$ again takes the total space of the classified $\hp^2$-bundle, and $\ind_\Gamma$ associates to a cobordism class $(M,f) $ the index of the Rosenberg--Dirac operator. The Lichnerowicz vanishing argument carries over to this case and shows that if $M$ admits a psc metric, then $\ind_\Gamma (M,f)=0$. Using the exactness of \eqref{eqn:pione-sequence}, we conclude that $(M,f)$ is cobordant to the total space of an $\hp^2$-bundle with structure group $P(\Sp(2) \times \Sp(1))$. 
\end{proof}

\section{Two further geometric ingredients for the non-spin case}\label{sec:geometry-for-SO}

For the proof of Theorem \ref{MainThm:spin} that we just gave, Theorem \ref{thm:splitting-of-projective-bundles} provided enough examples for admissible splittings. For the proof of Theorem \ref{MainThm:nonspin}, these do not quite suffice, and we need some more geometric ingredients, which will be developed in this section. The main results are Propositions \ref{cor:decomposables-have-admissibledecomposition}, \ref{prop:diskbundle-cross-pscmanifolds} and \ref{prop:lastcase}. 

\subsection{Stability of product metrics}\label{sec:products}

The goal of this subsection is the proof of the following two propositions.

\begin{prop}\label{cor:decomposables-have-admissibledecomposition}
Let $x \in \Omega^{\SO}_*$ be a cobordism class which can be written as $x= yz$, with $y$ of dimension at least $6$ and $z$ of positive dimension. Then $x$ contains a representative with an admissible splitting. 
\end{prop}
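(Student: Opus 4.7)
Pick a representative $Y^m$ of $y$ with $m \geq 6$ and a connected representative $Z^n$ of $z$ with $n \geq 1$, so that $Y \times Z$ represents $x$. By handle cancellation in the middle dimensions (available since $m \geq 5$) we may decompose $Y = Y_0 \cup_N Y_1$ with both inclusions $N \hookrightarrow Y_i$ being $2$-connected, producing a candidate decomposition
\[
Y \times Z = (Y_0 \times Z) \cup_{N \times Z} (Y_1 \times Z)
\]
whose boundary inclusions $N \times Z \hookrightarrow Y_i \times Z$ are still $2$-connected because $Z$ is $0$-connected. Applying \cite[Theorem D]{ERWpsc2} to $Y_0$ and to $Y_1^{op}$ (both of dimension $\geq 6$) yields $h_0, h_1 \in \Riem^+(N)$, a right stable $g_0 \in \Riem^+(Y_0)_{h_0}$ and a left stable $g_1 \in \Riem^+(Y_1)_{h_1}$.

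Fix an arbitrary Riemannian metric $z_0$ on $Z$; for $\lambda$ sufficiently large the scaled product metrics $g_i \oplus \lambda z_0$ on $Y_i \times Z$ have positive scalar curvature, by the product-scaling identity $\scal(g_i \oplus \lambda z_0) = \scal(g_i) + \lambda^{-1}\scal(z_0)$. To upgrade right (resp.\ left) stability from $Y_i$ to $Y_i \times Z$, I would establish a product-stability lemma to the effect that Riemannian products with a closed manifold (at sufficiently large scaling) preserve right and left stability. Such a lemma should be accessible by combining the formal calculus of stability developed in \cite{ERWpsc2,ERWpsc3} with the surgery theorem, since the deformations probing stability of $g_i \oplus \lambda z_0$ can be performed in the $Y_i$-direction while leaving the $Z$-factor essentially intact.

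The crux of the argument, which I expect to be the main obstacle, is to arrange that $h_0 \oplus \lambda z_0$ and $h_1 \oplus \lambda z_0$ lie in the same path component of $\Riem^+(N \times Z)$, for on $N$ alone $h_0$ and $h_1$ need not be isotopic (this is precisely the concordance-implies-isotopy issue flagged in Remark \ref{rem:concordance-isotopy}). The strategy is to exploit the positive-dimensional factor $Z$ crucially: one first modifies $Y$ within its cobordism class so that the corresponding product $Y \times Z$ carries a global psc metric extending $g_0, g_1$ (which delivers a psc concordance between the two boundary metrics on $N \times Z$), and then uses the extra $Z$-direction, via a spreading argument along the $Z$-factor, to promote this concordance to an honest path in $\Riem^+(N \times Z)$. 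Once such a path exists, Lemma \ref{lem:fibrationtrick} reduces to the situation $h_0 = h_1$, all conditions of Definition \ref{defn:splittingcondition-version1} are satisfied, and the decomposition above is an admissible splitting of a representative of $x = yz$.
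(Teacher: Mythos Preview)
Your proof has a genuine gap at precisely the point you identify as the crux. The ``spreading argument along the $Z$-factor'' is not made precise, and there is no known general mechanism by which taking a product with a positive-dimensional factor turns a psc concordance into an isotopy; this is still the concordance-implies-isotopy problem, and the extra factor does not circumvent it. Your proposed product-stability lemma (``products preserve right/left stability'') is plausible but also not the lemma that is actually needed.

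The paper avoids the isotopy obstacle by making two more specific choices that render the question vacuous. First, one represents $z$ by a manifold $N$ carrying an actual psc metric $g_N$; every class in $\Omega_*^{\SO}$ has such a representative, by \cite{GL1} or \cite{Fuehring}. Second, one represents $y$ by a relatively $3$-connected $B\SO$-manifold $M$, which then admits a decomposition $M = P \cup W \cup P^{op}$ where $P$ is the disc bundle of the nontrivial rank-$(d-2)$ bundle over $S^2$ and both boundary inclusions of $W$ are $2$-connected. The crucial feature is that the \emph{same} piece $P$, with the \emph{same} connection torpedo metric $g_P$, sits at both ends. Taking the splitting
\[
M \times N \;=\; (P \times N) \;\cup_{\partial P \times N}\; \bigl((W \cup P^{op}) \times N\bigr),
\]
the two induced boundary metrics on $\partial P \times N$ are \emph{literally equal} to $g_{\partial P} \oplus g_N$, so no isotopy is needed. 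Right stability of $g_P \oplus g_N$ comes from Korda\ss{}' Theorem~\ref{lemma:connectionmetricsrightstable}; left stability of the other piece follows from left stability of $g_P^{op} \oplus g_N$ together with the product-stability Lemma~\ref{prop:stability-of-productmetrics}, which says that for $W$ with $2$-connected boundary inclusions and $g_N$ psc, \emph{any} product metric $g_W \oplus g_N$ of positive scalar curvature is stable. Note that this lemma is different from, and in a sense stronger than, the one you propose: it does not assume stability of $g_W$, but it does require $g_N$ to be psc.
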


\begin{prop}\label{prop:diskbundle-cross-pscmanifolds}
Let $V_i \to N_i$ be vector bundles of rank $r_i \geq 3$, let $\psi: S(V_0) \cong S(V_1)$ be a diffeomorphism and let $M := D(V_0) \cup_{\psi} D(V_1)^{op}$. Let $P$ be a closed manifold which admits a psc metric. Then $M \times P$ has an admissible splitting. 
\end{prop}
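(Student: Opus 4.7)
The plan is to apply Proposition~\ref{prop:janbernhard} with trivial group $G=\{e\}$, after repackaging $M\times P$ as a union of two disc bundles. First, let $\tilde V_i \to N_i \times P$ denote the pullback of $V_i$ along the projection $N_i \times P \to N_i$; it still has rank $r_i \geq 3$, with $D(\tilde V_i) = D(V_i) \times P$ and $S(\tilde V_i) = S(V_i) \times P$. The diffeomorphism $\tilde\psi := \psi \times \id_P$ identifies sphere bundles, and $M \times P = D(\tilde V_0) \cup_{\tilde\psi} D(\tilde V_1)^{op}$. I would fix arbitrary Riemannian metrics $h_i$ on $N_i$, metric connections $\omega_i$ on $V_i$, torpedo metrics $g_\torp^{r_i}$, and a psc metric $g_P$ on $P$, and then consider the scaled base metric $\tilde h_{i,\sigma} := \sigma g_P \oplus h_i$ on $N_i \times P$ together with the pulled-back connection $\tilde\omega_i$ on $\tilde V_i$.

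Because the pullback connection has no component in the $P$-direction, a direct inspection (essentially a case of Lemma~\ref{lem:comparison-of-connection-metrics}) shows that the resulting connection metric decomposes as a Riemannian product
\[
\tilde g_{i,\sigma} \;:=\; \tilde h_{i,\sigma} \oplus_{\tilde\omega_i} g_\torp^{r_i} \;=\; \sigma g_P \oplus \bigl(h_i \oplus_{\omega_i} g_\torp^{r_i}\bigr),
\]
so that $\scal(\tilde g_{i,\sigma}) = \tfrac{1}{\sigma}\scal(g_P) + \scal\bigl(h_i \oplus_{\omega_i} g_\torp^{r_i}\bigr)$. Since $\scal(g_P)$ is bounded away from zero on the compact $P$ while the second summand is bounded on the compact $D(V_i)$, for all sufficiently small $\sigma$ this metric is psc.

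The main remaining task is to verify the path-component hypothesis~(5) of Proposition~\ref{prop:janbernhard}, and this is precisely where I expect the hypothesis ``$P$ admits a psc metric'' to be genuinely used. Write $k_i := h_i \oplus_{\omega_i} \delta_i^2 g_{S^{r_i-1}}$ for the induced boundary metric on $S(V_i)$, so that $\tilde g_{i,\sigma}|_{S(\tilde V_i)} = \sigma g_P \oplus k_i$. The task is then to connect $\sigma g_P \oplus k_0$ and $\sigma g_P \oplus \psi^* k_1$ inside $\Riem^+(S(V_0)\times P)$. My plan is to take the straight-line family $k_t := (1-t)k_0 + t\,\psi^* k_1$ of (possibly non-psc) metrics on $S(V_0)$; by compactness of $S(V_0) \times [0,1]$ the scalar curvatures $\scal(k_t)$ are uniformly bounded below, so shrinking $\sigma$ if necessary makes $\sigma g_P \oplus k_t$ lie in $\Riem^+(S(V_0)\times P)$ for every $t$. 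Once this is established, Proposition~\ref{prop:janbernhard} applied with trivial group and the trivial principal bundle $\ast \to \ast$ delivers an admissible splitting of $M\times P$. The hard part is exactly this last step: without a psc factor in the product there is no reason for the arbitrary metrics $k_0$ and $\psi^* k_1$ (which need not even be psc) to lie in a common path component of $\Riem^+(S(V_0))$, and the device of shrinking $\sigma$ is what converts positivity of $\scal(g_P)$ into enough curvature in the $P$-direction to dominate any bounded variation along a straight-line path in the sphere-bundle factor.
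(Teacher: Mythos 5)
Your proposal is correct and follows essentially the same route as the paper: both view $M\times P$ as $D(\tilde V_0)\cup_{\tilde\psi}D(\tilde V_1)^{op}$ for the pullback bundles over $N_i\times P$ and feed this into Proposition~\ref{prop:janbernhard} with trivial group, with the psc hypothesis on $P$ entering through a compactness-and-scaling argument that produces the required isotopy of boundary metrics. The only difference is cosmetic: the paper invokes the prepackaged Lemma~\ref{lem:psc-cross-nullhomotopic} for the path-component condition, whereas you inline the same convex-interpolation-plus-shrink-$\sigma g_P$ argument that underlies that lemma.
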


An important step in the proof of Proposition \ref{cor:decomposables-have-admissibledecomposition} is the following result.

\begin{lem}\label{prop:stability-of-productmetrics}
Let $W:M_0 \leadsto M_1$ be a cobordism such that $\dim (W) =d \geq 6$ and such that both inclusions $M_i \to W$ are $2$-connected. Let $(N,g_N)$ be a closed psc manifold, and let $g_W$ be a Riemannian metric on $W$ which has product form near the boundary, so that the product metric $g_W \oplus g_N$ on $W \times N$ has positive scalar curvature. Then $g_W \oplus g_N$ is stable.
\end{lem}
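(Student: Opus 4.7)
By reversing the cobordism, it suffices to prove right stability of $g_W \oplus g_N$, since left stability of $(W, g_W \oplus g_N)$ is the same as right stability of $(W^{op}, g_W^{op} \oplus g_N)$, and the hypotheses are symmetric in $M_0$ and $M_1$. The strategy is to exploit a handle decomposition of $W$ that is forced by the topological hypotheses.

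Because $d \geq 6$ and both inclusions $M_i \hookrightarrow W$ are $2$-connected, standard Morse-theoretic handle cancellation yields a Morse function $f\colon W \to [0,1]$ with $f^{-1}(i) = M_i$ and all critical points of index lying in the range $[3,d-3]$. This produces a handle decomposition $W = (M_0 \times I) \cup H_1 \cup \cdots \cup H_r$ where each $H_j$ has index $k_j$ with $3 \leq k_j \leq d-3$. Accordingly, $W \times N$ is built from $M_0 \times N \times I$ by successively attaching the handles $H_j \times N$, each of which is attached along $S^{k_j-1} \times D^{d-k_j} \times N$. In particular, the attaching sphere $S^{k_j-1} \times N$ has normal bundle of rank $d-k_j \geq 3$ in the relevant level set, which is precisely the codimension assumption of Korda{\ss}' theorem.

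I would prove by induction on $j$ that the restricted product metric is right stable on $W^{(j)} \times N$, where $W^{(j)}$ denotes the union of $M_0 \times I$ with the first $j$ handles. The base case $j = 0$ concerns the cylindrical cobordism $(M_0 \times I) \times N$, where the product metric is right stable essentially tautologically: gluing a cylinder to any cobordism $V\colon M_0 \times N \leadsto Q$ is homotopy equivalent to the identity operation on $V$ after absorbing the cylinder factor, so the gluing map is a weak equivalence. For the inductive step, I would first invoke Chernysh's surgery theorem (Theorem \ref{thm:surgerytheorem}) to homotope the psc metric on $W^{(j-1)} \times N$ through a path of psc metrics to one that has the standard torpedo-product form on a tubular neighborhood of the next attaching sphere $S^{k_j-1} \times N$. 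Since this homotopy stays within a single path component, right stability is preserved by the homotopy invariance principle \cite[Lemma 2.3.4]{ERWpsc3}, using Lemma \ref{lem:fibrationtrick} to transfer stability along the induced path of boundary metrics. After this normalisation, Korda{\ss}' theorem (Theorem \ref{lemma:connectionmetricsrightstable}) applies to the handle attachment $W^{(j-1)} \times N \hookrightarrow W^{(j)} \times N$ and yields right stability of the extended product metric on $W^{(j)} \times N$.

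The main obstacle will be carrying out the standard-form reduction cleanly in the inductive step: one must homotope the product metric within psc metrics so that it acquires the connection-metric form demanded by Theorem \ref{lemma:connectionmetricsrightstable}, while tracking the induced motion of the boundary metrics so that the inductive hypothesis continues to apply to the modified configuration. Also delicate is the base case: rigorously arguing that absorbing a cylinder into $V$ gives a weak equivalence on the level of spaces of psc metrics requires an isotopy-extension type argument that respects the scalar curvature condition. Once these pieces are in place, iterating the inductive step through all $r$ handles gives right stability of $g_W \oplus g_N$ on $W \times N = W^{(r)} \times N$, and combining with the reversed-cobordism argument establishes full stability.
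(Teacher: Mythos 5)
Your proposal takes a genuinely different route from the paper, and the route as you have set it up has two significant gaps.

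\textbf{First}, you want to apply Korda\ss' theorem (Theorem~\ref{lemma:connectionmetricsrightstable}) ``to the handle attachment $W^{(j-1)} \times N \hookrightarrow W^{(j)} \times N$.'' But that theorem concerns the cobordism $D(V)\colon \emptyset \leadsto S(V)$ where $V$ is a vector bundle over a \emph{closed} manifold $N$, i.e.\ a disc bundle filled in from the empty set. The cobordism relevant to your inductive step is the trace of a surgery, $T\colon \partial_+ W^{(j-1)} \leadsto \partial_+ W^{(j)}$, and $T \times N$ is not of the form $D(V)$ over a closed base. There is no statement in the cited references that gives right stability for such a trace directly, so this step of the induction is unsupported.

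\textbf{Second}, and more fundamentally, your inductive hypothesis is ``the product metric on $W^{(j)} \times N$ is right stable,'' but after the normalization via Chernysh's theorem and the gluing across the handle, the resulting psc metric on $W^{(j)} \times N$ is \emph{not} a product metric (it has torpedo/connection-metric form near the handle). So even if the gluing produced a right stable metric, this would not close the induction: you would still have to argue that right stability transfers from that metric back to the product metric. Right stability is a path-component property by \cite[Lemma 2.3.4]{ERWpsc3}, so what is missing is the observation that the space of product psc metrics on $W \times N$ is \emph{contractible} --- this is exactly Lemma~\ref{lem:almostpscms-contractible} in the paper, and it is the linchpin of the whole argument because it gives a canonical, unique path component of product metrics, which is what lets one transport stability across surgeries (Lemma~\ref{lem:stability-of-priductmetrics-preservedundersurgeires}). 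Your proposal never invokes this.

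The paper avoids the handle-by-handle tracking entirely. Instead it notes (i) product metrics on a cylinder times $N$ are tautologically stable, (ii) by handle cancellation $W \cup W^{\mathrm{op}}$ is obtained from $M_0 \times I$ by surgeries of index in $\{3,\dots,d-2\}$, (iii) by the contractibility of product metrics and Chernysh's theorem, the property ``all product psc metrics are right stable'' is preserved under such surgeries, and hence holds on $(W \cup W^{\mathrm{op}}) \times N$; then (iv) a purely formal argument with gluing maps (``$h \cup h^{\mathrm{op}}$ stable implies $\mu(\_,h)$ has a left inverse, and by symmetry a right inverse'') extracts stability of $h$ itself. This doubling trick is the piece of leverage your proposal lacks; it replaces the need to track stability across individual handles. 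If you want to make your handle-by-handle scheme rigorous, you would essentially have to rediscover Lemma~\ref{lem:almostpscms-contractible} and Lemma~\ref{lem:stability-of-priductmetrics-preservedundersurgeires}, and even then the Korda\ss-theorem invocation would have to be replaced by a Chernysh-surgery-equivalence argument.
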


The proof of Lemma \ref{prop:stability-of-productmetrics} begins with two lemmas. 

\begin{lem}\label{lem:almostpscms-contractible}
Let $W$ be a compact manifold with boundary. Then for each $\epsilon>0$, the space 
\[
\Riem^{> - \epsilon} (W) := \{ h \in \Riem (W) \vert \inf \scal (h) > - \epsilon \}
\]
is (weakly) contractible. If $(N,g_N)$ is a closed psc manifold, the space of product psc metrics (that is, psc metrics of the form $g_W \oplus g_N$) on $W \times N$ is contractible.
\end{lem}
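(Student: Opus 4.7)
The plan hinges on the homothetic scaling identity $\scal(\lambda^2 g) = \lambda^{-2} \scal(g)$. From this one immediately deduces that $\Riem^{>-\epsilon}(W)$ is preserved under $g \mapsto \lambda^2 g$ for $\lambda \geq 1$, since pointwise $\lambda^{-2} \scal(g)$ lies either in $[0, \infty)$ or in $(-\lambda^{-2}\epsilon, 0) \subset (-\epsilon, 0)$. In particular, scaling up any given metric produces an element of $\Riem^{>-\epsilon}(W)$, so the space is nonempty.

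To prove weak contractibility, I would show that an arbitrary continuous map $f \colon S^n \to \Riem^{>-\epsilon}(W)$ is null-homotopic inside $\Riem^{>-\epsilon}(W)$. Fix a base metric $g_0 \in \Riem(W)$ and construct the null-homotopy in two stages. In the first stage, scale the family up by a large factor via $H_1(x, t) := (1 + t(\lambda - 1))^2 f(x)$ for $t \in [0, 1]$; this stays in $\Riem^{>-\epsilon}(W)$ by the preservation property above. In the second stage, linearly interpolate to $\lambda^2 g_0$ via $H_2(x, s) := \lambda^2\bigl((1-s) f(x) + s g_0\bigr)$ for $s \in [0, 1]$. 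Convex combinations of positive definite bilinear forms are positive definite, so $H_2$ takes values in $\Riem(W)$; moreover, the map $(x, s) \mapsto (1-s) f(x) + s g_0$ has compact image in $\Riem(W)$, so continuity of scalar curvature in the $C^2$ topology yields a uniform bound $|\scal((1-s) f(x) + s g_0)| \leq M$. Since $\scal(H_2(x, s)) = \lambda^{-2} \scal((1-s) f(x) + s g_0)$, choosing $\lambda^2 > M/\epsilon$ forces $H_2$ to stay inside $\Riem^{>-\epsilon}(W)$. Concatenating $H_1$ and $H_2$ null-homotopes $f$ to the constant $\lambda^2 g_0$. Since $\Riem^{>-\epsilon}(W)$ is an open subset of a Fr\'echet manifold (hence an ANR), weak contractibility upgrades to contractibility.

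For the second assertion I would use the product formula $\scal(g_W \oplus g_N) = \scal(g_W) + \scal(g_N)$. Setting $\epsilon_0 := \min_N \scal(g_N) > 0$ (positive because $N$ is closed with psc), the positivity condition on the product metric becomes $\inf_W \scal(g_W) > -\epsilon_0$, so the space of product psc metrics on $W \times N$ is precisely $\Riem^{>-\epsilon_0}(W)$, and the first part applies.

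The main obstacle is making the linear-interpolation stage work: scalar curvature is highly nonlinear in the metric, so naively interpolating two metrics in $\Riem^{>-\epsilon}(W)$ can easily leave that subspace. The scaling trick circumvents this by uniformly shrinking all scalar curvatures in the compact interpolated family by a factor $\lambda^{-2}$, which can be made as small as desired independently of the geometric content of the interpolation.
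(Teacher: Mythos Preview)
Your argument is correct and matches the paper's proof essentially verbatim: both rely on the scaling identity $\scal(\lambda^2 g)=\lambda^{-2}\scal(g)$ to first inflate the given family, then perform a linear interpolation which stays in $\Riem^{>-\epsilon}(W)$ because the scaling factor shrinks all scalar curvatures of the compact interpolated family below $\epsilon$ in absolute value. The only cosmetic difference is that the paper adds a third stage scaling the endpoint back down to a fixed basepoint $h_0$, whereas you stop at $\lambda^2 g_0$ and invoke the ANR property to upgrade weak to genuine contractibility; neither addition is essential.
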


\begin{proof}
The space $\Riem^{> - \epsilon} (W)$ is nonempty: if $h \in \Riem (W)$, then $th \in \Riem^{> - \epsilon} (W)$ if $t$ is sufficiently large. Pick $h_0 \in \Riem^{> - \epsilon} (W)$. Let $X$ be a compact space and let $h: X \to  \Riem^{> - \epsilon} (W)$ be a map. Let 
\[
H : K \times I \to \Riem (W), \; H(x,t):= (1-t)h(x) + th_0, 
\]
and 
\[
\lambda:=  \inf_{(x,t ) \in K \times [0,1]} \scal (H(x,t)) . 
\]
Choose $a \geq 1$ such that $\frac{1}{a} \lambda > - \epsilon$, and define a homotopy $P:K \times [0,3] \to \Riem^{> - \epsilon} (W)$ by
\[
P(x,t) := 
\begin{cases}
(1-t+ ta ) h(x)  & t \in [0,1]\\
(2-t) ah(x) + (t-1) ah_0 & t \in [1,2]\\
((3-t) a+ (t-2)) h_0  & t \in [2,3].
\end{cases}
\]
Hence $\Riem^{>-\epsilon}(W) \simeq \ast$. The second claim of the Lemma follows from the first and the observation that 
\[
\scal (g_W \oplus g_N) >0 \Leftrightarrow \inf \scal (g_W) > - \inf \scal (g_N). 
\]
\end{proof}

\begin{lem}\label{lem:stability-of-priductmetrics-preservedundersurgeires}
Let $N$ be a closed manifold which admits a psc metric, let $W^d : M_0 \leadsto M_1$ be a cobordism, and assume that each product psc metric on $W \times N$ is right stable. Let $\varphi: S^{k-1} \times D^{d-k+1} \to W$ be a surgery datum in the interior of $W$ with $3 \leq k \leq d-2$, and let $W_\varphi$ be the result of performing a surgery along $\varphi$. Then each product psc metric on $W_\varphi \times N$ is right stable. The same is true for left stable metrics. 
\end{lem}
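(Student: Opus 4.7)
The plan has two parts: first, reduce to exhibiting a single right stable product psc metric on $W_\varphi\times N$; second, build one by a fiberwise Gromov--Lawson--Chernysh surgery, starting from the given right stable product metric on $W\times N$.

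For the reduction, Lemma \ref{lem:almostpscms-contractible} gives path-connectedness of the space of product psc metrics on $W_\varphi\times N$, and \cite[Lemma 2.3.4]{ERWpsc3} propagates right stability along paths in $\Riem^+(W_\varphi\times N)$, so a single right stable product psc metric on $W_\varphi\times N$ will suffice.

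For the construction, take a right stable product metric $g_W\oplus g_N$ on $W\times N$. Apply Theorem \ref{thm:surgerytheorem} to the product embedding
\[
\Phi:=\varphi\times\id_N\colon(S^{k-1}\times N)\times\bR^{d-k+1}\hookrightarrow W\times N,
\]
whose torpedo dimension is $d-k+1\geq 3$; dually, Theorem \ref{thm:surgerytheorem} also applies on $W_\varphi\times N$ to the embedding
\[
\Phi'\colon(N\times S^{d-k})\times\bR^{k}\hookrightarrow W_\varphi\times N
\]
associated with the dual handle $D^k\times N\times S^{d-k}$, using $k\geq 3$. The standard-form subspaces $\Riem^+(W\times N,\Phi)$ and $\Riem^+(W_\varphi\times N,\Phi')$ are canonically homeomorphic, because a metric in either is determined by its restriction to the common complement of the surgery region together with the fixed torpedo metric on the (dual) tube. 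The Gromov--Lawson--Chernysh bending, performed entirely in the $W$-factor, sends $g_W\oplus g_N$ to a psc metric of product form $g_{W_\varphi}\oplus g_N$ on $W_\varphi\times N$.

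The main obstacle is to transfer right stability through this construction. For any cobordism $V$ extending the outgoing boundary and any $h\in\Riem^+(\partial_+ V)$, two applications of Theorem \ref{thm:surgerytheorem}, combined with the canonical identification above, yield a zigzag of weak equivalences natural in $(V,h)$:
\[
\Riem^+(W\times N\cup V)_{-,h}\;\simeq\;\Riem^+(W\times N\cup V,\Phi)_{-,h}\;\cong\;\Riem^+(W_\varphi\times N\cup V,\Phi')_{-,h}\;\simeq\;\Riem^+(W_\varphi\times N\cup V)_{-,h}.
\]
Since the interior surgery does not alter the boundary of $W$, the gluing maps for $g_W\oplus g_N$ and $g_{W_\varphi}\oplus g_N$ share the same domain $\Riem^+(V)_{(g_W\oplus g_N)|_\partial,h}$, and the zigzag intertwines them. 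Hence right stability of $g_W\oplus g_N$ implies that of $g_{W_\varphi}\oplus g_N$. The left-stable case follows from the identical argument applied to the reversed cobordism.
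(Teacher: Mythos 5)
Your overall strategy is close to the paper's: both use the homeomorphism between the Chernysh standard-form subspaces together with the contractibility statement of Lemma~\ref{lem:almostpscms-contractible}, and both propagate stability through a weak equivalence induced by the surgery theorem. Your reduction step (use Lemma~\ref{lem:almostpscms-contractible} to get a unique component of product metrics, then use \cite[Lemma 2.3.4]{ERWpsc3} to make right stability a property of that component) is correct and matches the paper's logic. However, your construction step has two genuine gaps.

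First, you assert that ``the Gromov--Lawson--Chernysh bending, performed entirely in the $W$-factor, sends $g_W\oplus g_N$ to a psc metric of product form $g_{W_\varphi}\oplus g_N$.'' This is not justified. To begin with, the bending cannot be performed ``in the $W$-factor'': the hypothesis only guarantees that $g_W\oplus g_N$ has positive scalar curvature, not $g_W$ itself, so the Chernysh deformation must run on $W\times N$. More seriously, the Chernysh deformation retraction of $\Riem^+(W\times N)$ onto $\Riem^+(W\times N,\Phi)$ modifies the metric in the interpolation region between the tube and the unperturbed part, and there is no a priori reason why the intermediate metrics (or the endpoint) remain of the product form $g'\oplus g_N$; the bending rescales the link metric by a radial warping factor, which generically breaks the product structure. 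The paper sidesteps this entirely: it does not claim that the deformation preserves products, only that the \emph{homeomorphism} $\Riem^+(W\times N,\phi)\cong\Riem^+(W_\varphi\times N,\phi')$ does (which is an easy bookkeeping observation, since that map only swaps a fixed product metric on the handle for a fixed product metric on the dual handle). Combined with uniqueness of the product component from Lemma~\ref{lem:almostpscms-contractible}, this suffices at the level of $\pi_0$ without ever producing an explicit standard-form product metric.

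Second, your claim that ``the zigzag intertwines'' the two gluing maps $\mu(g_W\oplus g_N,\underline{\ \ })$ and $\mu(g_{W_\varphi}\oplus g_N,\underline{\ \ })$ is not literally true. The gluing map $\mu(g_W\oplus g_N,\underline{\ \ })$ lands in $\Riem^+((W\times N)\cup V)$ but \emph{not} in the standard-form subspace $\Riem^+((W\times N)\cup V,\Phi)$, since $g_W\oplus g_N$ is not standard near the tube; so the square you need does not commute on the nose, only up to a homotopy provided by the Chernysh retraction. Making this precise is exactly the content of the argument in \cite[Theorem~2.6]{BERW}, which the paper simply cites. Your write-up asserts the conclusion of that argument without supplying it. So while the intended route is right, as written the proof still needs (i) to replace the ``GLC gives a product'' claim by the homeomorphism-preserves-products observation, and (ii) to either cite \cite{BERW} or reproduce its homotopy-commutativity argument for the gluing squares.
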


\begin{proof}
Let $\varphi': S^{d-k} \times D^k \to W_\varphi$ be the dual surgery datum. We define 
\[
\phi:= \varphi \times \id_N:  S^{k-1} \times D^{d-k+1}  \times N \to W \times N
\]
and 
\[
\phi' := \varphi'\times \id_N: S^{d-k} \times D^k \to W_\varphi \times N. 
\]
Then 
\[
W_\varphi \times N = ((W \times N) \setminus \im (\phi)) \cup_{S^{k-1} \times S^{d-k} \times N} (D^k \times S^{d-k} \times N). 
\]
Fix a psc metric $g_N$ on $N$ and let $\Riem^+ (W \times N, \phi)  \subset \Riem^+ (W \times N)$ be the subspace of those metrics which are equal to $g_{S^{k-1}} \oplus g_\torp^{d-k} \oplus g_N$ on $\im (\phi)$, and define $\Riem^+ (W_\varphi \times N, \phi')  \subset \Riem^+ (W_\varphi \times N)$ in an analogous way. The inclusions 
\[
\Riem^+ (W \times N, \phi)  \to \Riem^+ (W \times N), \; \; \Riem^+ (W_\varphi \times N, \phi')  \to \Riem^+ (W_\varphi \times N)
\]
are weak equivalences by Theorem \ref{thm:surgerytheorem} (or rather a variant of that for manifolds with boundary which is covered by the proof in \cite{EbFrenck}). There is a homeomorphism
\begin{equation}\label{eq:generalized-surgery}
\Riem^+ (W \times N, \phi) \cong \Riem^+ (W_\varphi \times N, \phi'),
\end{equation}
and hence a weak equivalence
\[
S: \Riem^+ (W \times N) \simeq \Riem^+ (W_\varphi \times N).
\]
By the argument given in the proof of Theorem 2.6 of \cite{BERW}, the path components which contain right stable metric correspond to each other under the bijection $\pi_0 (S)$. Furthermore, the homeomorphism \eqref{eq:generalized-surgery} and its inverse preserve product metrics. By Lemma \ref{lem:almostpscms-contractible}, there are \emph{unique} components of $\Riem^+ (W \times N) $ and $ \Riem^+ (W_\varphi \times N)$ which contain metrics of the form $g_W \oplus g_N$, and so these two components correspond to each other under $\pi_0 (S)$. Hence one is right stable if and only if the other one is.
\end{proof}

\begin{proof}[Proof of Lemma \ref{prop:stability-of-productmetrics}]
This is similar to the proof of \cite[Theorem E = Theorem 3.1.2(ii)]{ERWpsc2}, and we refer to that paper for more details. First note that any psc metric of the form $g_M \oplus dt^2 \oplus g_N$ on $M \times [0,1] \times N$ is stable. Now let $W$ be as in the statement of the proposition. By the proof of the $s$-cobordism theorem \cite{Kervaire} or \cite{WallGC}, $W$ has a handlebody decomposition (relative to $M_0$) in which all handles have index in $\{3 ,\ldots, d-2\}$. As explained in the proof of \cite[Theorem 3.1.2(ii)]{ERWpsc2}, it follows that $W \cup W^{op}$ and $M_0 \times [0,2]$ are related by a sequence of surgeries of index in $\{ 3, \ldots, d-2\}$. By Lemma \ref{lem:stability-of-priductmetrics-preservedundersurgeires}, each product psc metric on $(W \cup W^{op}) \times N$ is stable. Now let $h \in \Riem^+ (W \times N)$ be a product psc metric. Then $h \cup h^{op}$ is a product psc metric, therefore stable. Thus the gluing map $\mu ( \_,h)$ has a left inverse in the homotopy category, namely $\mu (\_,h^{op})$. 

The same argument can be applied with the roles of $W$ and $W^{op}$ reversed, and leads to the conclusion that $\mu (\_,h^{op})$ also has a left inverse in the homotopy category. Therefore $\mu (\_,h^{op})$ is a weak equivalence, and so is $\mu (\_,h)$. This means that $h$ is left stable. By Theorem E of \cite{ERWpsc2}, $h$ is also right stable. 
\end{proof}

\begin{proof}[Proof of Proposition \ref{cor:decomposables-have-admissibledecomposition}]
Let $M^d$ be a representative for $y$ which is relatively $3$-connected. In particular, $M$ is simply connected, $H_2 (M);\bZ)\cong H_2 (B\SO; \bZ) \cong \bZ/2$, and $M$ does not have a spin structure. 

Let $S^2 \to M$ be an embedding which represents a generator of $\pi_2 (M)\cong \bZ/2$. The normal bundle is the nontrivial vector bundle over $S^2$, and so we obtain a $2$-connected embedding $P \to M$ of the disc bundle of the nontrivial vector bundle over $S^2$, which leads to a decomposition $M = P \cup_{\partial P} (M\setminus P)$. By general position, the inclusion map $M\setminus P \to M$ is $(d-3) \geq 3$-connected, and so the manifold $M \setminus P$ is relatively $3$-connected.

The argument so far only used that $M$ was relatively $3$-connected and not that it was closed. Hence we can apply the same argument to $M\setminus P$. The result is a decomposition 
\[
M = P \cup W \cup P^{op}
\]
where $W: \partial P \leadsto \partial P$ is a cobordism such that both inclusions $\partial P \to W$ are $2$-connected. Let $g_P$ be a connection torpedo metric on $P$, and choose $g_W \in \Riem(W)_{g_{\partial P}, g_{\partial P}}$, not necessarily of positive scalar curvature. 

The cobordism class $z \in \Omega_*^{\SO}$ contains a representative $N$ which carries a psc metric $g_N$. This follows from the proof of \cite[Corollary C]{GL1} (a more elegant way to see this is to invoke the main result of \cite{Fuehring}). We can assume that $\scal (g_W \oplus g_N)>0$.

By Lemma \ref{prop:stability-of-productmetrics}, the metric $g_W \oplus g_N$ is stable, and the metric $g_P \oplus g_N$ is a connection torpedo metric, so it is right stable by Theorem \ref{lemma:connectionmetricsrightstable}. Likewise, $g_P^{op} \oplus g_N$ is left stable, and we have found an admissible splitting
\[
M \times N = (P \times N) \cup_{\partial P \times N} ((W \cup P^{op}) \times N)
\]
with suitable psc metric $(g_P \oplus g_N) \cup ((g_W \cup g_P^{op}) \oplus g_N$. 
\end{proof}

For the proof of Proposition \ref{prop:diskbundle-cross-pscmanifolds}, we need another lemma. 

\begin{lem}\label{lem:psc-cross-nullhomotopic}
Let $(M,g_M)$ and $(N,g_N)$ be two psc manifolds. Then the product map $\nu:\Riem^+ (M) \times \Riem^+ (N) \to \Riem^+ (M \times N)$, $(g_0, g_1) \mapsto g_0 \oplus g_1$ is homotopic to the constant map. 
\end{lem}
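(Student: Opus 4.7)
The plan is to deform $\nu$ in two stages. First I will homotope $\nu$ to the map
\[
\nu' \colon \Riem^+(M) \times \Riem^+(N) \to \Riem^+(M \times N), \quad (g_0, g_1) \mapsto g_0 \oplus g_N,
\]
eliminating the dependence on the second variable. Second, I observe that $\nu'$ factors through the space $\{g_0 \oplus g_N : g_0 \oplus g_N \text{ is psc}\} \subset \Riem^+(M \times N)$, which under the identification $g_0 \oplus g_N \leftrightarrow g_0$ is precisely $\Riem^{>-\epsilon}(M)$ with $\epsilon := \min_y \scal(g_N)(y) > 0$. Lemma \ref{lem:almostpscms-contractible} guarantees that this space is contractible, so any continuous map into it is nullhomotopic, and hence so is $\nu'$, and thus so is $\nu$.

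For the first stage I construct an explicit three-phase homotopy. The obstruction to simply running the straight-line path $g_1^{(u)} := (1-u)g_1 + u g_N$ in $\Riem(N)$ is that it generally leaves $\Riem^+(N)$. To compensate, I first shrink $g_0$ by a continuous positive scaling factor
\[
s(g_0, g_1) := \frac{\min_x \scal(g_0)(x)}{2\bigl(1 + \max_{u \in [0,1],\, y \in N}|\scal(g_1^{(u)})(y)|\bigr)},
\]
chosen so that the scalar-curvature contribution $\tfrac{1}{s}\scal(g_0)$ of the first factor dominates any negativity appearing along $g_1^{(u)}$. Phase~1 linearly interpolates $g_0 \oplus g_1$ to $sg_0 \oplus g_1$; this is psc throughout since both factors contribute positively. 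Phase~2 linearly interpolates $sg_0 \oplus g_1$ to $sg_0 \oplus g_N$ via $sg_0 \oplus g_1^{(u)}$; the pointwise estimate built into $s$ keeps this psc. Phase~3 linearly interpolates $sg_0 \oplus g_N$ back to $g_0 \oplus g_N$; again both factors are now psc. The endpoint is precisely $\nu'$, and continuous dependence on $(g_0, g_1)$ is clear.

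The only genuine verifications are continuity and positivity of $s(g_0, g_1)$ and strict positivity of scalar curvature along each phase; both follow routinely from compactness of $M$ and $N$ and continuous dependence of $\scal$ on the metric. I do not expect any essential obstacle: the argument is a careful packaging of Lemma \ref{lem:almostpscms-contractible} together with the basic scaling identity $\scal(\lambda g) = \tfrac{1}{\lambda}\scal(g)$, and the symmetric variant (swapping the roles of $M$ and $N$) would work equally well.
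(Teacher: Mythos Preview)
Your proof is correct and follows essentially the same two-stage strategy as the paper's, which differs only cosmetically: the paper eliminates the \emph{first} variable first (homotoping $\nu$ to $(g_0,g_1)\mapsto g_M\oplus g_1$) rather than the second, and it restricts to compact subsets from the outset to obtain uniform bounds rather than using your pointwise-continuous scaling factor $s(g_0,g_1)$. Both arguments rest on the same scale-then-straight-line trick encoded in Lemma~\ref{lem:almostpscms-contractible}, and you even note the symmetry yourself.
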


\begin{proof}
We only show that the map is weakly homotopic, i.e. that the restriction to a compact subset $K  \subset \Riem^+ (M) \times \Riem^+ (N)$ is nullhomotopic. The technique to write down the homotopies is very similar to the proof of Lemma \ref{lem:almostpscms-contractible}, hence we shall be very brief. 
The first step is a homotopy from $\nu|_K$ to the map $(g_0,g_1) \mapsto g_M \oplus g_1$, which is very similar to the homotopy used in the proof of Lemma \ref{lem:almostpscms-contractible}. The second step is a homotopy from that map to the constant map $g_M \oplus g_N$. 
\end{proof}

\begin{proof}[Proof of Proposition \ref{prop:diskbundle-cross-pscmanifolds}]
Pick Riemannian metrics $h_i$ on $N_i$ such that $h_i \oplus g_P$ is a psc metric on $N_i \times P$. Pick metric connections $\omega_i$ on $V_i$ and suitable torpedo metrics $g_\torp^{r_i}$ on $\bR^{r_i}$ such that $h_i \oplus_{\omega_i} g_\torp^ {r_i}$ is a psc metric on $D(V_i)$. 

The product metrics $(h_i \oplus_{\omega_i} g_\torp^ {r_i}) \oplus g_P$ are connection torpedo metrics. Lemma \ref{lem:psc-cross-nullhomotopic} implies that the metrics $(h_0 \oplus_{\omega_0} g_\torp^ {r_0})|_{S(V_0)} \oplus g_P$ and $(\psi^* (h_1 \oplus_{\omega_1} g_\torp^ {r_1})|_{S(V_1)}) \oplus g_P$ are isotopic; this finishes the proof because of Proposition \ref{prop:janbernhard}. 
\end{proof}

\subsection{Splitting a certain \texorpdfstring{$5$}{5}-manifold}\label{sec:wu}

Here, we prove the following. 

\begin{prop}\label{prop:lastcase}
There is a closed oriented $5$-manifold $M$ with the following properties:
\begin{enumerate}
\item We can write $M= W_0 \cup W_1$ as a union of two disc bundles of rank $3$ over $S^2$, 
\item this is an admissible splitting, and 
\item $[M] \neq 0 \in \Omega_5^\SO$.
\end{enumerate}
\end{prop}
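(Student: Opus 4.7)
The plan is to take $M$ to be the Wu manifold $\mathrm{Wu} = SU(3)/\SO(3)$, which generates $\Omega_5^{\SO} \cong \mathbb{Z}/2$, constructed concretely as a union of two copies of the disk bundle of the non-trivial oriented rank-$3$ vector bundle $V \to S^2$.

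First I would construct $M$ explicitly. Let $V \to S^2$ be the non-trivial oriented rank-$3$ bundle (classified by the generator of $\pi_1(\SO(3)) = \mathbb{Z}/2$), so that the sphere bundle $N := S(V)$ is diffeomorphic to $\cp^2 \# \overline{\cp^2}$ with intersection form $\mathrm{diag}(1,-1)$, and write $H_2(N) = \mathbb{Z}\langle s_0, f_0 \rangle$ where $s_0$ is a section of self-intersection $-1$ and $f_0$ is the fiber class (so $s_0 \cdot f_0 = 1$, $f_0 \cdot f_0 = 0$). Set $M := D(V) \cup_\psi D(V)^{op}$ for an orientation-reversing self-diffeomorphism $\psi$ of $N$ whose induced action on $H_2(N)$ is $\psi_*(s_0) = -s_0 - f_0$ and $\psi_*(f_0) = 2 s_0 + f_0$. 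A short computation verifies this is an anti-isometry of the intersection form, so by Wall's theorem on self-diffeomorphisms of simply-connected $4$-manifolds some such $\psi$ exists. A Mayer--Vietoris computation (using that $\iota^0_*(s_0)$, $\iota^1_*(s_0)$, $\iota^1_*(f_0)$ are $\pm 1$-multiples of the generators of $H_2(W_i) = \mathbb{Z}$, whereas $\iota^0_*(f_0) = 0$) shows the image of $H_2(N) \to \mathbb{Z}^2$ is generated by $(1,-1)$ and $(0,2)$, so $H_2(M) = \mathbb{Z}/2$. Van Kampen makes $M$ simply-connected, and $w_2(M) \neq 0$ because the zero section $S^2 \subset W_i$ has normal bundle $V$ with $w_2(V) \neq 0$. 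By Barden's classification $M \cong \mathrm{Wu}$; more directly, $w_2 w_3[M] \neq 0$ using Wu's formula $w_3 = \mathrm{Sq}^1 w_2$ on an orientable $5$-manifold together with the fact that $\mathrm{Sq}^1$ applied to the generator of $H^2(M;\mathbb{Z}/2)$ (which arises from $\mathrm{Ext}(\mathbb{Z}/2,\mathbb{Z}/2)$ via universal coefficients) is non-zero. This settles (1) and (3).

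For the remaining conditions of Definition~\ref{defn:splittingcondition-version1}: (iii) follows from the long exact homotopy sequence of $S^2 \to N \to S^2$, which has simply-connected base and fiber, so the projection $N \to S^2 \simeq D(V)$ is $2$-connected. For (ii), equip $V$ with a metric connection $\omega$ and choose a torpedo metric $g^3_\torp$ of radius small enough that $g := g_{S^2} \oplus_\omega g^3_\torp$ has positive scalar curvature on $D(V)$, using~\eqref{eq:scalarcurvature-scaling-identity}. By Theorem~\ref{lemma:connectionmetricsrightstable}, $g$ is right stable on $W_0 = D(V)$, and the same metric viewed on $W_1^{op} = D(V)^{op}$ is left stable. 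Take $g_0 = g_1 = g$.

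The main obstacle is condition (i): one must show that $g|_N$ and $\psi^*(g|_N)$ lie in the same path component of $\Riem^+(N)$. The subtlety is that $\psi$ was deliberately chosen \emph{not} to respect the $S^2$-fibration $N \to S^2$ (any bundle-preserving gluing would yield an $S^3$-bundle over $S^2$, hence $[M] = 0$), so $\psi^*(g|_N)$ is \emph{a priori} just some psc metric on $N \cong \cp^2 \# \overline{\cp^2}$, not a connection metric. The plan to close this gap is to exploit two forms of flexibility: (a) post-composing $\psi$ with a diffeomorphism isotopic to the identity does not alter $[M]$, yet deforms $\psi^*(g|_N)$ continuously within $\Riem^+(N)$; and (b) the space of metric connections on $V$ is a contractible affine space, so connection metrics for different connections are joined by canonical paths in $\Riem^+(N)$. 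Combining (a) and (b), one deforms $\psi^*(g|_N)$ through psc metrics to a connection metric $g_{S^2} \oplus_{\omega'} g''$ for some new connection $\omega'$ and fiber metric $g''$, and then interpolates back to $g|_N$ through the affine family of connections on $V$. Verifying that this deformation can be carried out while keeping $\psi$ within the isotopy class required for $[M] = [\mathrm{Wu}]$ is the technical heart of the proof and where the bulk of the work lies; once it is done, Proposition~\ref{prop:janbernhard} (applied with trivial $G$ and $P$, $B$ reduced to points) produces the desired admissible splitting.
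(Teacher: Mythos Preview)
Your treatment of (1) and (3), and of conditions (ii) and (iii) in Definition~\ref{defn:splittingcondition-version1}, is essentially correct and parallel to the paper's, though you produce the gluing diffeomorphism abstractly via Wall's theorem whereas the paper writes it down explicitly. (A small slip: with your $\psi_*$ one has $\iota^1_*(f_0)=\pm 2$, not $\pm 1$; your stated image $\langle (1,-1),(0,2)\rangle$ and the conclusion $H_2(M)\cong\bZ/2$ are nonetheless correct.)

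The genuine gap is condition (i). Your two observations do not combine to give what you need. Observation (a) only moves $\psi^*(g|_N)$ \emph{within its own path component} of $\Riem^+(N)$; observation (b) only connects connection metrics \emph{to one another}. Neither, nor any combination of them, produces a path from $\psi^*(g|_N)$ to a connection metric. Since your $\psi$ comes from Wall's theorem you have no geometric control over $\psi^*(g|_N)$ whatsoever, and the step you flag as ``the technical heart'' is in fact the entire content of the proposition. There is also no hope of isotoping $\psi$ to a bundle map: $\cp^2\#\overline{\cp^2}$ has a unique $S^2$-fibration over $S^2$ up to isotopy, and you yourself note that a fibre-preserving gluing gives a nullbordant manifold.

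The paper avoids this problem by designing the gluing map so that condition (i) becomes tractable. It decomposes $N=(S^3\times_{S^1}D^2)\cup(S^3\times[0,1])\cup(S^3\times_{S^1}D^2)$ and defines $F$ to be the identity on the first piece, complex conjugation on the third, and $(u,t)\mapsto(\gamma(t)u,t)$ on the middle cylinder for a path $\gamma$ in $\SO(4)$ from $1$ to the conjugation $\kappa$. The point is that on the two end pieces $F$ is already an \emph{isometry} of the connection metric $g_{\cp^1}\oplus_\omega g_{S^2}$, so $F^*$ of that metric agrees with it there. After deforming the fibre metric $g_{S^2}$ through $O(2)$-invariant nonnegatively curved metrics to a ``double torpedo'' $g_{\dtorp}^2$, the middle cylinder carries the honest product $g_{S^3}\oplus dt^2$, and the discrepancy reduces to comparing $g_{S^3}\oplus dt^2$ with its pullback by a path of isometries of $(S^3,g_{S^3})$; a separate elementary lemma shows these are isotopic rel boundary in $\Riem^+(S^3\times[0,1])$. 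This explicit structure is what makes (i) provable; an abstractly chosen $\psi$ offers no such foothold.
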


\begin{construction}\label{construction-wu-manifold}
Let 
\[
V := S^3 \times_{S^1} (\bC \oplus \bR) \to \cp^1,
\]
 where $S^1$ acts on $\bC$ by rotations and trivially on $\bR$. This is the unique nontrivial real vector bundle of rank $3$ on $\cp^1$, and $V$ does not have a spin structure. We write $W:= D(V)$ and $N:= S(V)$ for the unit disc bundle and unit sphere bundle. Note that
\[
N= S^3 \times_{S^1} S^2
\]
is the \emph{Hirzebruch surface}; in \cite{Hirzebruch1951}, it is shown that $N \cong \cp^2 \# \overline{\cp^2}$. Let us write 
\begin{equation}\label{eqn:decomposition-wu}
N =( S^3 \times_{S^1} D^2 ) \cup (S^3 \times [0,1]) \cup_\varphi (S^3 \times_{S^1} D^2),
\end{equation}
using the diffeomorphism $\varphi: S^3 \times_{S^1} S^1 \to S^3$ given by $\varphi ([u,x]):= ux$. We now define a diffeomorphism $F:  N \to N$ as follows. On the first copy of $S^3 \times_{S^1} D^2$, $F$ is the identity. On the second copy of $S^3 \times_{S^1} D^2$, $F$ is given by complex conjugation $[u,x] \mapsto [\overline{u}, \overline{x}]$. Via the gluing diffeomorphisms, these maps induce the identity on $S^3 \times \{0\}$ and the complex conjugation $\kappa$ on $S^3 \times \{1\}$. The complex conjugation on $S^3 \subset \bC^2$ lies in $\SO(4)$, and hence there is a path $\gamma: [0,1] \to \SO(4)$ with $\gamma (0)=1$ and $\gamma (1)= \kappa$. We furthermore assume that $\gamma $ is constant near $0$ and constant near $1$. On the middle $S^3 \times [0,1]$, we define $F$ by the formula $F(t,u):= (t,\gamma(t)u)$. Altogether, this defines $F: N \to N$. 
We now define 
\[
 M:= W \cup_F W,
\]
which is a closed $5$-manifold which by construction is the union of two disc bundles of rank $3$, as claimed in Proposition \ref{prop:lastcase}. Since $W$ is simply connected and $N$ is connected, $M$ is simply connected, by Seifert--Van-Kampen. Therefore, $M$ is orientable, and we chose an orientation on $M$.
\end{construction}

\begin{proof}[Proof that $M$ is not nullbordant]
We will show that 
\begin{equation}\label{homology-wumanifold}
 H_k (M;\bZ) = 
\begin{cases}
\bZ & k =0,5,\\
\bZ/2 & k= 2,\\
0 & k \neq 0,2,5.
\end{cases}
\end{equation}
This will imply the lemma, by the following argument. It is enough to prove that $\langle w_2 (TM) w_3 (TM),[M] \rangle \neq 0$. To that end, recall that the \emph{semi-characteristic} $\kappa (X;\bF) \in \bZ/2$ of a closed $(2m+1)$-manifold $X$ with respect to a field $\bF$ is defined as
\[
\kappa(X;\bF):= \sum_{k=0}^{m} \dim_\bF (H_k (X;\bF)) \in \bZ/2. 
\]
Lusztig, Milnor and Peterson  showed in \cite{LuMilPet} that
\[
\kappa (X;\bQ)- \kappa (X;\bF_2) = \langle w_2 (TX) w_{2m-1} (TX),[X] \rangle \in \bZ/2. 
\]
Together with \eqref{homology-wumanifold}, this shows that $\langle w_2 (TM) w_3 (TM),[M] \rangle \neq 0$.

It remains to compute $H_2 (M;\bZ)$; the other groups are determined by the simple connectivity of $M$ and by Poincar\'e duality. The $S^1$-action on $S^2 \subset \bC \oplus \bR$ has the two fixed points $(0,\pm 1)$. Those two fixed points determine sections $s_\pm $ of the bundle $N=S^3 \times_{S^1} S^2 \to \cp^1$ and hence elements $b_\pm = \hur (s_\pm) \in H_2 (N;\bZ)$. It is easily verified (e.g. by the Mayer-Vietoris sequence for the decomposition \eqref{eqn:decomposition-wu}) that $(b_+,b_-)$ is a $\bZ$-basis of $H_2 (N) \cong \bZ^2$. 

The two $S^1$-fixed points $(0,\pm 1)$ can be connected by a path of fixed points in $D^3$; therefore, the two sections $s_\pm$ become homotopic in $S^3 \times_{S^1} D^3 = W$. Hence $j_* (b_-)= j_* (b_+) \in H^2 (W)$, where $j: N \to W$ denotes the inclusion map. Since $N \to W$ is $2$-connected, $j_*$ is surjective on $H_2$. Hence $j_* (b_\pm)$ must be a generator $u \in H_2 (W)\cong \bZ$. 

To compute the effect of the diffeomorphism $F$ on homology, let us agree that the signs are chosen in such a way that $s_-$ lies in the first part of the decomposition \eqref{eqn:decomposition-wu} and $s_+$ in the third one. With these choices
\[
 F_* (b_-)=b_-; \; F_*(b_+) = -b_+.
\]
The first is clear. For the second, consider the commutative diagram 
\[
 \xymatrix{
 S^3 \ar[d]^{\eta} \ar[r]^{\kappa} & S^3 \ar[d]^{\eta} \\
 \cp^1 \ar[r]^{q} & \cp^1
 }
\]
($q$ and $\kappa$ are the complex conjugation maps and $\eta$ the Hopf map). Using that $\deg (q)=-1$, we get that the complex conjugation induces $-1$ on $H_2 (S^3 \times_{S^1} 0) \cong H_2 (S^3 \times_{S^1} D^2)$. To compute $H_2 (M)$, consider the piece of the Mayer-Vietoris sequence for the decomposition $M = W \cup_F W$:
\[
 H_2 (N;\bZ) \stackrel{(j_*, j_* \circ F_*)}{\to} H_2 (W;\bZ) \oplus H_2 (W;\bZ) \to H_2 (M;\bZ) \to 0 
\]
With respect to the bases $(b_+,b_-)$ and $(u,u)$, the first map is represented by the matrix 
\[
\begin{pmatrix}
1 & 1 \\ -1 & 1 
\end{pmatrix}
\]
with determinant $2$. It follows that $ H_2 (M)\cong \bZ/2$, as claimed.
\end{proof}

\begin{rem}
It follows from Barden's classification theorem for simply connected $5$-manifolds \cite{barden65} that the manifold $M$ is diffeomorphic to  $\mathrm{SU}(3)/\SO(3)$, the Wu manifold. 
\end{rem}

It remains to be proven that $M= W \cup_F W$ is an admissible splitting. This is accomplished by Lemmas \ref{lemaa2:wumanifold} and \ref{lemma3:wumnaifild} below. To prove them, we have to do some geometry. 

\begin{lem}\label{lemaa2:wumanifold}
Let $\omega$ be the $U(2)$-invariant connection on the Hopf bundle $\eta:S^3 \to \cp^1$ which was introduced in \S \ref{subsec:projectivebundles}, and let $g_{\torp}^3$ be a torpedo metric on $D^3$ of radius $1$. Then the metric 
\[
 g_{\cp^1} \oplus_{\omega } g_{\torp}^3
\]
on $S^3 \times_{S^1} D^3= W$ has positive scalar curvature and is right stable. 
\end{lem}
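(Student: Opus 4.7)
The plan is to reduce to Theorem \ref{lemma:connectionmetricsrightstable}, which will give right stability of $g := g_{\cp^1}\oplus_{\omega} g_\torp^3$ on $W = D(V) \colon \emptyset \leadsto S(V)$ as soon as positive scalar curvature is established: the vector bundle $V$ has rank $3$, and the connection on $V$ induced from the Hopf connection is a metric connection because the linear $S^1$-action on $\bC \oplus \bR$ is by isometries. The task is therefore to verify positive scalar curvature.

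For that, O'Neill's formula \eqref{eq:scalarcurvature-of-connectionmetric} gives, pointwise on $W$,
\[
\scal(g)_{[u,q]} = \scal(g_{\cp^1}) + \scal(g_\torp^3)_q - |A|^2_{[u,q]}.
\]
From \eqref{eqn:scalar-curvature-projectivespace}, $\scal(g_{\cp^1}) = b_{2,2} = 8$, and the torpedo metric of radius $1$ satisfies $\scal(g_\torp^3) \geq c_3 = 2$ pointwise by definition. It therefore suffices to establish the pointwise bound $|A|^2 \leq 2$, which would yield $\scal(g) \geq 8 > 0$ uniformly on $W$.

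The bound on $|A|^2$ is the main content of the proof. Because $\cp^1$ is $2$-dimensional, \eqref{eqn:computation-A-ternsor} gives
\[
|A|^2_{[u,q]} = 2\,\|\rho(\Omega(x_{1*},x_{2*}))_q\|_{g_\torp^3}^2
\]
for orthonormal horizontal $x_1, x_2$ lifting an orthonormal basis of $T_{\pi(u)}\cp^1$. The curvature value $\Omega(x_{1*},x_{2*}) \in i\bR$ is the same as in the Hopf fibration $S^3 \to \cp^1$, for which the identical computation yields the value $a_{2,2} = 2$ recorded in \eqref{eqn:A-torsor-hopf-bundles}. Hence the bound reduces to a geometric comparison: the $g_\torp^3$-norm of the fundamental vector field of the $S^1$-action at $q \in D^3$ is bounded above by the $g_{S^1}$-norm of the corresponding field on the standard Hopf fibre. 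Parametrizing $D^3$ by torpedo polar coordinates $(r,\theta)$ with $\theta \in S^2 \subset \bC\oplus \bR$ and polar angle $\psi$ from the $\bR$-axis, the fundamental vector field has length $f(r)\sin\psi \leq \delta = 1$, with equality exactly on the equator of the boundary sphere, which is a great circle matching the length of a Hopf fibre in $S^3$.

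The one piece of bookkeeping to watch is the normalization of $\Omega$ relative to the Lie algebra $i\bR$, so that the comparison with the quantity $a_{2,2} = 2$ from \eqref{eqn:A-torsor-hopf-bundles} is tight at the extremal point (the equator of $\partial D^3$). Once this is in place, $\scal(g) \geq 8$ holds uniformly on $W$, and Theorem \ref{lemma:connectionmetricsrightstable} immediately delivers right stability.
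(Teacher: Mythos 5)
Your proposal is correct and follows essentially the same route as the paper's proof: reduce right stability to positivity of scalar curvature via Theorem \ref{lemma:connectionmetricsrightstable}, apply the O'Neill formula \eqref{eq:scalarcurvature-of-connectionmetric} with $\scal(g_{\cp^1})=8$ and $\scal(g_\torp^3)\geq 2$, and bound $|A|^2\leq 2$ by comparing the length of the $S^1$-orbits in $(D^3,g_\torp^3)$ to the length of a Hopf fibre. Your explicit parametrization of the orbit-length bound by $f(r)\sin\psi\leq\delta=1$ is just a more detailed version of the paper's one-line assertion that the orbits have length at most $2\pi$.
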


\begin{proof}
By Theorem \ref{lemma:connectionmetricsrightstable}, it is enough to verify that $\scal ( g_{\cp^1} \oplus_{\omega } g_{\torp}^3) >0$. Using \eqref{eq:scalarcurvature-of-connectionmetric} and \eqref{eqn:scalar-curvature-projectivespace}, we get
\[
\scal ( g_{\cp^1} \oplus_{\omega } g_{\torp}^3) = 8 + \scal(g_{\torp}^3) - |A_{g_{\torp}^3}|^2.
\]
By definition of a torpedo metric, $\scal(g_{\torp}^3) \geq \scal(g_{S^2}) = 2$. The closed orbits of the $S^1$-action on $D^3$, in the torpedo metric $g_{\torp}^3$, have length at most $2 \pi$ and are therefore at most as long as the orbits of the translation action of $S^1$ on itself. It follows, using \eqref{eqn:computation-A-ternsor}, that 
\[
 |A_{g_{\torp}^3}|^2 \leq |A |^2= 2.
\]
where $|A|^2$ is the norm of the $A$-tensor for the translation action, which was computed in \eqref{eqn:A-torsor-hopf-bundles}.
\end{proof}

\begin{lem}\label{lemma3:wumnaifild}
The psc metrics $g_{\cp^1} \oplus_\omega g_{S^2}$ and $F^* (g_{\cp^1} \oplus_\omega g_{S^2})$ lie in the same path component of $\Riem^+ (N)$. 
\end{lem}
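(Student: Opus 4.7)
The proof will proceed in three main steps. First, I would verify that the "full complex conjugation" $\kappa: N \to N$, $[u,y] \mapsto [\bar u, \bar y]$, is an isometry of $g := g_{\cp^1} \oplus_\omega g_{S^2}$. The base $\cp^1$ carries an antiholomorphic isometry; the Hopf connection $\omega$ is the unique $G(2)$-invariant principal connection on $\eta: S^3 \to \cp^1$ and is in particular $\kappa$-invariant; and the map $(z,r) \mapsto (\bar z, r)$ is an isometry of the round $S^2 \subset \bC \oplus \bR$ that intertwines the $S^1$-rotation via the conjugation automorphism. Together these observations show that $\kappa$ descends to an isometry of the connection metric $g$ on the Borel construction $N$.

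This isometry localizes the problem. Because $F$ is the identity on the first disc bundle $D_1 = S^3 \times_{S^1} D^2$ and equals $\kappa$ on the second disc bundle $D_2$, we get that $F^*g$ agrees with $g$ on all of $D_1 \cup D_2$, so $g$ and $F^*g$ can differ only on the middle collar $C = S^3 \times [0,1]$ of the decomposition \eqref{eqn:decomposition-wu}; in particular, they have identical boundary jets on $\partial C$. The task is then to find a path in $\Riem^+(N)$ between $g$ and $F^*g$ that is stationary outside $C$.

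The third step is to interpolate $g|_C$ and $F^*g|_C$ through a family of psc metrics on $C$ using the defining path $\gamma : [0,1] \to \SO(4)$ from $1$ to $\kappa$ as homotopy parameter. My plan is to pick a smooth reparametrising cutoff $\rho : [0,1] \to [0,1]$ with $\rho \equiv 0$ near $0$ and $\rho \equiv 1$ near $1$, put $\tau_s(u,t) := (\gamma(s \rho(t)) u, t)$ on $C$, and consider the metric $g_s$ equal to $\tau_s^* g$ on $C$ and to $g$ outside $C$. Pullback by a diffeomorphism preserves positivity of scalar curvature, so each $g_s$ is psc on $C$. The key geometric input is that every element of $\SO(4)$ acts by isometries of the round metric on $S^3$, and the cross-sectional metric of $g$ on $C$ reaches the round Hopf $S^3$ precisely at the equator of the fiber $S^2$. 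Arranging the decomposition so that the twist in $\rho$ is concentrated near this equator and absorbing the remainder by a small perturbation supported in a collar of $\partial C$ produces a smooth one-parameter family of psc metrics on $N$ from $g$ to $F^*g$.

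The main obstacle is the smoothness of the family at $\partial C$. The fundamental reason this is subtle is that $F$ itself is \emph{not} isotopic to any isometry of $g$: it acts as $\mathrm{diag}(-1,+1)$ on $H_2(N;\bZ)\cong \bZ b_+\oplus \bZ b_-$, and this matrix lies outside the image in $\mathrm{Aut}(H_2(N))$ of the isometry group of $g$, which consists of $\{\pm I, \pm \mathrm{swap}\}$. Thus no global pullback isotopy can exist, and the interpolation must be executed genuinely inside $\Riem^+(N)$ via metrics of the form $\tau_s^* g$ on $C$ only, with carefully matched boundary data on $\partial C$. The decisive point is that for $s\in(0,1)$ the element $\gamma(s)\in\SO(4)$ does not normalize the Hopf circle, so it is not an isometry of the Berger boundary metric away from the equator; it is precisely the extra $\SO(4)$-symmetry (beyond $N_{\SO(4)}(S^1)$) available at the equatorial cross-section that allows the twist to be absorbed and the boundary matching to close up.
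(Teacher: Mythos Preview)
Your localization steps are sound and agree with the paper: the full conjugation $\kappa$ is an isometry of $g=g_{\cp^1}\oplus_\omega g_{S^2}$, so $F^*g=g$ on $D_1\cup D_2$ and the problem lives entirely on the collar $C=S^3\times[0,1]$. The gap is in your Step~3. Your family $g_s$, defined as $\tau_s^*g$ on $C$ and $g$ outside, is not a smooth metric on $N$ for $0<s<1$: near $t=1$ you have $\tau_s(u,t)=(\gamma(s)u,t)$, so $g_s|_{C}$ approaches $\gamma(s)^*h$ on the boundary slice, where $h$ is the (non-round) Berger metric induced there by $g$. Since $\gamma(s)\notin N_{\SO(4)}(S^1)$ this does not equal $h$, so $g_s$ does not glue. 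Your proposed fix (``concentrate the twist near the equator and absorb the remainder by a small perturbation'') does not address this: any reparametrisation $\rho$ with $\rho(1)=1$ still forces the boundary value $\gamma(s)$ at $t=1$, and replacing $g_s$ near $\partial C$ by some interpolation between $\gamma(s)^*h$ and $h$ takes you out of the class of pullback metrics, so positivity of scalar curvature is no longer automatic. You have correctly diagnosed the obstruction but not removed it.

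The paper supplies the missing idea in two pieces. First, it deforms the fibre metric $g_{S^2}$ through $S^1$-invariant metrics of nonnegative scalar curvature (with $S^1$-orbit length $\le 2\pi$) to a ``double torpedo'' $g^2_{\mathrm{dtor}}$ which is \emph{exactly} $g_{S^1}\oplus dx^2$ on the equatorial annulus; the curvature estimate $\scal(g_{\cp^1}\oplus_\omega g(u))\ge 6$ keeps this path inside $\Riem^+(N)$. After this deformation the collar carries precisely the product metric $g_{S^3}\oplus dx^2$ on every slice, so now $\gamma(s)$ \emph{is} an isometry of each cross-section for all $s$, eliminating the boundary mismatch you ran into. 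Second, even with this preparation one cannot simply pull back, because $F|_C=\Gamma$ twists the $t$-direction as well; the paper proves a separate Lemma (stretching the cylinder so the twist becomes $C^2$-small, then linearly interpolating) showing that $g_{S^3}\oplus dt^2$ and $\Gamma^*(g_{S^3}\oplus dt^2)$ lie in the same component of $\Riem^+(S^3\times[0,1])_{g_{S^3},g_{S^3}}$. Both ingredients are essential, and neither is present in your Step~3.
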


For the proof, we need another auxiliary result: 

\begin{lem}\label{lemma4:wumanifold}
Let $(N,g)$ be a closed Riemannian manifold, $\scal (g)>0$, and let $\gamma: [0,1] \to \Isom (N,g)$ be a smooth path which is constant near $0$ and $1$. Let $\Gamma: N \times [0,1] \to   N \times [0,1]$ be the diffeomorphism $(x,t) \mapsto (\gamma(t) x, t)$. Then the two metrics 
\[
 g \oplus dt^2, \Gamma^* (g \oplus dt^2) \in \Riem^+ (N \times [0,1])_{g,g}
\]
lie in the same path component. 
\end{lem}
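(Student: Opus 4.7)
The plan is to exhibit the path as a pullback family of the product metric $g \oplus dt^2$ along an isotopy of diffeomorphisms of $N \times [0,1]$. Concretely, I will construct a smooth family $\Phi_s : N \times [0,1] \to N \times [0,1]$, $s \in [0,1]$, each of the form $\Phi_s(x,t) = (\gamma_s(t)\, x, t)$ for a smooth path $\gamma_s : [0,1] \to \Isom(N,g)$, with $\gamma_0 \equiv \gamma(0)$ and $\gamma_1 = \gamma$, and crucially with each $\gamma_s$ constant in $t$ near both endpoints of $[0,1]$. Each $\Phi_s$ is then a diffeomorphism of $(N \times [0,1], g \oplus dt^2)$, so $\Phi_s^*(g \oplus dt^2)$ has positive scalar curvature. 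Near the boundary, $\Phi_s$ reduces to a product of a fixed isometry of $(N,g)$ with the identity on the interval, so $\Phi_s^*(g \oplus dt^2) = g \oplus dt^2$ in a collar; thus the whole path lies in $\Riem^+(N\times [0,1])_{g,g}$. The endpoint values are $\Phi_0^*(g \oplus dt^2) = \gamma(0)^* g \oplus dt^2 = g \oplus dt^2$ and $\Phi_1^*(g \oplus dt^2) = \Gamma^*(g \oplus dt^2)$, as required.

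To build $\gamma_s$, pick a smooth cutoff $\chi : [0,1] \to [0,1]$ with $\chi \equiv 0$ near $0$ and $\chi \equiv 1$ near $1$, and proceed in two phases. For $s \in [0, 1/2]$ set $\gamma_s(t) := \gamma(2s\,\chi(t))$, deforming the constant path $\gamma(0)$ to the reparameterized path $\gamma \circ \chi$. For $s \in [1/2, 1]$ set $\gamma_s(t) := \gamma\bigl((2-2s)\,\chi(t) + (2s-1)\,t\bigr)$, deforming $\gamma \circ \chi$ through a linear interpolation of its argument into $\gamma$ itself. A standard reparameterization in $s$ around $s = 1/2$ makes the concatenation smooth.

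The verification that each $\gamma_s$ is constant near $t = 0$ and $t = 1$ uses the flat regions of $\gamma$: choose $a > 0$ with $\gamma$ constant on $[0,a]$ and on $[1-a,1]$. Near $t=0$, $\chi(t)=0$, so the argument of $\gamma$ is $0$ in Phase 1 and $(2s-1)t \in [0,t] \subset [0,a]$ in Phase 2; hence $\gamma_s(t) = \gamma(0)$. Near $t=1$, $\chi(t)=1$, so the argument is $2s$ (constant in $t$) in Phase 1 and $1 - (2s-1)(1-t) \in [1-a,1]$ in Phase 2; hence $\gamma_s(t) = \gamma(1)$ on a neighborhood of $1$ in Phase 2 (and is trivially $t$-independent in Phase 1).

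The main subtlety, and the reason the two-phase structure is necessary rather than the one-parameter family $\gamma_s(t) := \gamma(st)$, is that for intermediate $s$ the function $\gamma(st)$ need not be constant in $t$ near $t=1$, since $\gamma$ is not assumed constant in a neighborhood of $s$. The cutoff $\chi$ in Phase 1 forces constancy near both endpoints throughout the deformation, and Phase 2 then removes the cutoff in a way that the flat regions of $\gamma$ near $0$ and $1$ absorb the remaining $t$-variation.
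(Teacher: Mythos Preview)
Your proof is correct and genuinely different from the paper's. You observe that $\Gamma$ is isotopic, through diffeomorphisms of the form $(x,t)\mapsto(\gamma_s(t)x,t)$ with $\gamma_s$ constant near the ends, to the map $(x,t)\mapsto(\gamma(0)x,t)$; pulling back $g\oplus dt^2$ along this isotopy yields the desired path in $\Riem^+(N\times[0,1])_{g,g}$ immediately, since pullback by a diffeomorphism preserves scalar curvature and a fibrewise isometry pulls the product metric back to itself near the boundary. The two-phase construction of $\gamma_s$ is clean and the endpoint checks are right.

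By contrast, the paper argues analytically: it extends $\gamma$ to $\bR$, rescales to $\gamma_r(t)=\gamma(t/r)$, computes $\Gamma_r^*(g\oplus dt^2)$ explicitly in block form, and shows that as $r\to\infty$ this converges in $C^\infty$ to $g\oplus dt^2$, so the \emph{linear} interpolation $(1-s)(g\oplus dt^2)+s\,\Gamma_r^*(g\oplus dt^2)$ stays psc for large $r$. This has the minor cost of passing to $N\times\bR$ (and implicitly using that lengthening the cylinder does not change path components), and it actually uses the hypothesis $\scal(g)>0$ in an essential way to control the convex combination. Your argument is more elementary, never leaves $N\times[0,1]$, and in fact never uses that $\scal(g)>0$ beyond the trivial observation that $g\oplus dt^2$ has positive scalar curvature; the isometry hypothesis on $\gamma$ does all the work. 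The paper's route, on the other hand, yields the slightly stronger conclusion that a suitable reparameterized version of $\Gamma^*(g\oplus dt^2)$ is connected to $g\oplus dt^2$ by a straight-line homotopy of metrics.
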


\begin{proof}
Extend $\gamma$ to a smooth function $\gamma:\bR \to \Isom (N,g)$, constant on $(-\infty,0]$ and on $[1,\infty)$. For $r>0$, let $\gamma_r (t) := \gamma(\frac{1}{r} t)$ and define $\Gamma_r: N \times \bR \to N \times \bR$ accordingly. It will be enough to prove that for sufficiently large $r$, the metric 
\[
 (1-s)  g \oplus dt^2 + s\Gamma_r^* (g \oplus dt^2)
\]
on $N \times \bR$ has positive scalar curvature for all $s \in [0,1]$. Let $X_r$ be the vector field on $N \times \bR$ given by 
\[
 X_r (p,t):= \left. \frac{d}{du}\right|_{u=0} (\gamma_r (t + u) p ,t). 
\]
Note that 
\[
 X_r (p,t)= \frac{1}{r} X_1 (p, \frac{t}{r}). 
\]
Using that $\gamma (t)$ is an isometry, one calculates that the metric $\Gamma^* (g \oplus dt^2)$ is given (using the splitting $T_{(p,t)} (N \times \bR) = T_p N \oplus \bR$) by the matrix
\[
\begin{pmatrix}
g & X_r  \\ 
 X_r^{T} & 1+ \norm{X_r}^2 
\end{pmatrix}.
\]
For $r \to \infty$, this converges uniformly to $g \oplus dt^2$, together with all derivatives. From that, the claim follows.
\end{proof}

\begin{proof}[Proof of Lemma \ref{lemma3:wumnaifild}]
Let $g_\torp^2$ be a torpedo metric on $D^2$ of radius $1$. This does not have positive scalar curvature, but at least we know that $\scal (g_\torp^2) \geq 0$. Let $g_\dtorp^2$ be the metric on $D^2 \cup (S^1 \times [0,1]) \cup D^2 = S^2$ which is given by $g_\torp^2$ on the two discs and by $g_{S^1} \oplus dt^2$ on the cylinder. 
There is a smooth path $u \mapsto g(u)$ of Riemannian metrics on $S^2$ with $g(0)= g_{S^2}$, $g(1)= g_\dtorp^2$, such that each $g(u)$ is $S^1$-invariant, has nonnegative scalar curvature and such that the lengths of the $S^1$-orbits for each $g(u)$ are at most $2 \pi$. This can be seen from the formula for the scalar curvature of a warped product metric on $S^1 \times \bR$ which can be found in e.g. \cite[\S 2.2]{WalshMorse}. 

Consider the metric $g_{\cp^1} \oplus_\omega g(u)$ on $S^3 \times_{S^1} S^2 = N$. 
It follows that
\[
 \scal (g_{\cp^1} \oplus_\omega g(u) ) = 8 + \scal(g(u)) - |A_{g(u)}|^2 \geq 8 - |A|^2 =6
\]
for all $u$. Hence $g_{\cp^1} \oplus_\omega g_{S^2}$ and $g_{\cp^1} \oplus_\omega g_\dtorp^2$ lie in the same path component of $\Riem^+ (N)$, and therefore, it suffices for our aim to prove that 
\[
[ g_{\cp^1} \oplus_\omega g_\dtorp^2] = [F^*( g_{\cp^1} \oplus_\omega g_\dtorp^2)] \in \pi_0 (\Riem^+ (N)).
\]
By the construction of $F$ (and because the torpedo metric is $O(2)$-invariant), $F^*( g_{\cp^1} \oplus_\omega g_\dtorp^2)$ and $ g_{\cp^1} \oplus_\omega g_\dtorp^2$ agree on the two copies of $S^3 \times_{S^1} D^2$ inside $N$. On the middle piece $S^3 \times [0,1]$, we have 
\[
F^* ( g_{\cp^1} \oplus_\omega g_\dtorp^2) = F^* ( g_{\cp^1} \oplus_\omega (g_{S^1} \oplus dx^2)) = F^* (g_{S^3} \oplus dx^2)  \in \Riem^+ (S^3 \times [0,1])_{g_{S^3}, g_{S^3}}. 
\]
By Lemma \ref{lemma4:wumanifold}, it is isotopic within $\Riem^+ (S^3 \times [0,1])_{g_{S^3}, g_{S^3}}$ to the product metric. This finishes the proof. 
\end{proof}

\section{The computation in the non-spin case}\label{sec:non-spin}

The exact sequence \eqref{eqn:stolzkrecksequence} has an analogue for oriented cobordism, namely the exact sequence 
\[
\Omega^\SO_{d-4} (B (C_2 \ltimes PU(3) )) \stackrel{T}{\to} \Omega_d^\SO \to \pi_d (H \bZ),
\]
where $T$ takes the total space of $\cp^2$-bundles and the second map is given by the Thom class, see \cite{Fuehring}. Unfortunately, this is of no use for the proof of Theorem \ref{MainThm:nonspin}, since $\cp^2$ has too small dimension. Therefore, we have to take a more pedestrian route and need rather explicit generators for the oriented bordism ring \(\Omega_*^{\SO}\). The following summarizes the information which is relevant for us; we show in \S \ref{subsec:references} how to extract this from the classical literature.

\begin{thm}\label{thm:structure-oriented-bordism}
The oriented cobordism ring $\Omega_*^\SO$ is generated (multiplicatively) by elements of the following type. 
\begin{enumerate}
\item Classes $v_{4k} \in \Omega_{4k}^\SO$, one for each $k \geq 2$ and each $v_{4k}$ is a sum of classes which are represented by $\cp^n$-bundles with structure group $U(n+1)$ and $n \geq 3$, 
\item $v_4: = [\cp^2]$,
\item an infinite set $Z$ of classes in degrees $\geq 9$ which are represented by $\cp^n$-bundles with structure group $C_2$, $n \geq 3$,
\item a single element $z_5$ in degree $5$.
\end{enumerate}
The low-dimensional cobordism groups are as follows. 
\begin{enumerate}
\item $\Omega_k^\SO=0$ for $k=1,2,3,6,7$,
\item $\Omega_4^\SO = \bZ$, generated by $v_4=[\cp^2]$,
\item $\Omega_5^\SO \cong \bZ/2$, generated by $z_5$
\item $\Omega_8^\SO \cong \bZ^2$, generated by $v_8=[\cp^4]$ and $v_4^2=[\cp^2 ]^2$, and the subgroup generated by $[\cp^4]$ and $[\hp^2]$ has index $3$. 
\end{enumerate}
\end{thm}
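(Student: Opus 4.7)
The plan is to derive this structural theorem from the classical computations of oriented bordism by Thom, Milnor, Novikov, Wall and Stong, combined with the Wu-manifold construction of Proposition \ref{prop:lastcase}.

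The low-dimensional part of the statement is essentially a translation of Wall's tabulation: $\Omega_k^{\SO}=0$ for $k\in\{1,2,3,6,7\}$, $\Omega_4^{\SO}\cong\bZ$ with generator $[\cp^2]$, $\Omega_5^{\SO}\cong\bZ/2$ with generator $z_5$ supplied by the Wu manifold $\mathrm{SU}(3)/\SO(3)$ of \S \ref{sec:wu}, and $\Omega_8^{\SO}\cong\bZ^2$ with basis $\{[\cp^4],[\cp^2]^2\}$. The final clause in (4) reduces to a Pontryagin-number calculation. Using the standard formulae for the Pontryagin classes of $\cp^n$ and $\hp^2$, one computes $(p_1^2,p_2)$ on $[\cp^4]$, $[\cp^2]^2$ and $[\hp^2]$ to be $(25,10)$, $(18,9)$ and $(4,7)$ respectively; solving the resulting $2\times 2$ linear system yields $[\hp^2]=3[\cp^2]^2-2[\cp^4]$, so $\langle [\cp^4],[\hp^2]\rangle$ has index $3$ in $\Omega_8^{\SO}$.

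For the multiplicative generators $v_{4k}$ of the torsion-free part I would appeal to Milnor's explicit generators. The Milnor hypersurfaces $H_{r,s}\subset\cp^r\times\cp^s$ of bidegree $(1,1)$ are simultaneously the projectivizations of natural rank-$s$ complex vector bundles over $\cp^r$, hence $\cp^{s-1}$-bundles with structure group $U(s)$. Milnor's $s$-number computation shows that in each degree $4k$ one can combine such classes with $[\cp^{2k}]$ to produce polynomial generators of $\Omega_{4k}^{\SO}/\mathrm{Tors}$ over $\bZ$. For $k\ge 2$ there is enough freedom to arrange $s-1\ge 3$ in every summand (e.g.\ take $r$ small and $s$ large in the relation $r+s=2k+1$), which after correcting by a decomposable delivers $v_{4k}$ of the required form. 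An induction on $k$ then accounts for the whole polynomial subring modulo torsion.

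The most delicate piece is the torsion generating set $Z$. By Wall's theorem, $\Omega_*^{\SO}$ has only $2$-torsion, detected by Stiefel--Whitney numbers involving $w_2$. Geometric representatives arise as $\cp^n$-bundles whose monodromy around nontrivial loops in the base is complex conjugation on $\cp^n$, i.e.\ with structure group reduced to $C_2\subset\mathrm{Aut}(\cp^n)$; the complex conjugation is what feeds the nonzero $w_2$ into the total space. In each degree $\ge 9$ the dimension budget permits the additional constraint $n\ge 3$, for instance by raising $n$ at the expense of shrinking the base. The main obstacle, and the part that occupies \S \ref{subsec:references}, is to verify degree-by-degree that these $C_2$-bundle representatives together with the previously constructed $v_{4k}$ and $z_5$ truly exhaust $\Omega_*^{\SO}$; this is a book-keeping argument modelled on Stong's analysis of the $2$-torsion of oriented bordism, and once it is carried out the theorem is proved.
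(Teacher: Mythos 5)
Your outline agrees with the paper's strategy at the top level: isolate the free part via Milnor's $s$-number generators realized by Milnor hypersurfaces (so projective bundles with unitary structure group), treat the $2$-torsion separately, and read off the low-dimensional table. The $\Omega_8^\SO$ clause you prove by Pontryagin numbers $(p_1^2,p_2)$ rather than the paper's $(\mathrm{sign},s_2)$ computation, and your arithmetic is correct; this is a harmless variation. The torsion-free part is also essentially the paper's Proposition \ref{prop:torsionfreepart}, though you should note that the choice is not as free as ``take $r$ small and $s$ large'': what the paper actually uses is that $H_{1j}$ are nullbordant, so one must take $i \geq 2$ (hence $j \leq 2k-1$), and the key is the elementary $\gcd$ identity $\gcd\{\binom{n}{j}: 1 \leq j \leq n-1\}$ equals $p$ if $n=p^s$ and $1$ otherwise, which shows that $H_{ij}$ with $i \geq 2$, $j \geq 4$ together with $\cp^{2k}$ already hit the required $s$-number.

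Where your proposal has a genuine gap is exactly where you flag one: the torsion generating set $Z$. You correctly identify the mechanism ($C_2$-monodromy on $\cp^n$ feeding a nontrivial $w_2$ into the total space), but ``a book-keeping argument modelled on Stong'' is not a proof, and the heuristic ``raise $n$ at the expense of shrinking the base'' is not how the constraint $n\geq 3$ is actually verified. What the paper does, and what a complete proof must do, is the following. Wall's theorem identifies $T_* = \operatorname{im}(\partial)$ via the exact sequence $\Omega_n^{\SO} \xrightarrow{\,2\,} \Omega_n^{\SO} \xrightarrow{\,\rho\,} \mathfrak{W}_n \xrightarrow{\,\partial\,} \Omega_{n-1}^{\SO}$, where $\mathfrak{W}_*\subset\Omega_*^O$ is the subalgebra of classes with $w_1$ integral. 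One then needs the explicit polynomial generators of $\mathfrak{W}_*$: the Dold manifolds $P(2^r-1,2^r s)$, their mapping tori $Q(2^r-1,2^r s)$, and $\cp^{2^{r-1}}$. Since $\partial$ is a derivation and vanishes on $\rho$-images (including $[P]^O$ and $[\cp^{2^{r-1}}]^O$), only the $Q$-generators contribute, and those produce exactly $\cp^n$-bundles with $C_2$ monodromy. The constraint $n \geq 3$ holds because the fiber of $P(2^r-1,2^r s)$ is $\cp^{2^r s}$ with $2^r s \geq 2$ and equality only for $r=s=1$, which is the single exceptional class $z_5 = [P(1,2)]$; everything else has fiber dimension $\geq 3$. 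Your proposal neither invokes the exact sequence nor the derivation property nor the concrete Dold-manifold generators, so the crucial step — that one can realize every torsion class with $n\geq 3$ except for the single degree-$5$ class — is not established.
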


\subsection{Proof of Theorem \ref{MainThm:nonspin}}

Let us first show how to use it to finish the proof of Theorem \ref{MainThm:nonspin}. Besides Theorem \ref{thm:structure-oriented-bordism}, we need one more computation from homotopy theory. 

\begin{prop}\label{prop:reductionstructuregroup-oriented}
The map 
\begin{equation}\label{eq:1}
\Omega_*^{\SO}(BU(1)^n)\rightarrow \Omega_*^{\SO}(BU(n)),
\end{equation}
induced by the inclusion of a maximal torus \(U(1)^n\rightarrow U(n)\), is surjective. Hence each $\cp^{n-1}$-bundle $E \to M$ over a closed oriented manifold with structure group $U(n)$ is oriented cobordant to a $\cp^{n-1}$-bundle with structure group $U(1)^n$. 
\end{prop}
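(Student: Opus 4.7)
The approach is the splitting principle for oriented cobordism. First, $\MSO$ is complex orientable: a complex vector bundle has a canonical orientation as a real vector bundle and hence a Thom class in $\MSO$-theory. This equips $\MSO^{*}$ with Chern classes and the usual projective bundle formula
\[
\MSO^{*}(\bP(V)) \cong \MSO^{*}(X)\langle 1,t,\ldots,t^{n-1}\rangle
\]
for every rank-$n$ complex vector bundle $V \to X$, where $t = c_1^{\MSO}(\mathcal{O}_{\bP(V)}(1))$.

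Applied iteratively to the universal bundle $\gamma_n \to BU(n)$, the successive complex projective bundles realize the complete flag bundle of $\gamma_n$, which is homotopy equivalent to the inclusion $i: BU(1)^n \to BU(n)$ of the classifying space of the maximal torus. The projective bundle formula thus shows that $\MSO^{*}(BU(1)^n)$ is a free module over $\MSO^{*}(BU(n))$ of rank $n!$, with $1 \in \MSO^{*}(BU(1)^n)$ among the basis elements; in particular $i^{*}: \MSO^{*}(BU(n)) \hookrightarrow \MSO^{*}(BU(1)^n)$ is a split monomorphism of graded $\MSO^{*}$-modules. Dualizing via the Kronecker pairing gives a split surjection
\[
i_{*}: \Omega^{\SO}_{*}(BU(1)^n) \twoheadrightarrow \Omega^{\SO}_{*}(BU(n))
\]
of $\Omega^{\SO}_{*}$-modules, which is the surjectivity \eqref{eq:1}. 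The duality is valid because both classifying spaces have CW-structures with only even cells and torsion-free integral cohomology, so the Atiyah--Hirzebruch spectral sequence for the complex-orientable theory $\MSO$ collapses at $E_2$ without extension problems on both spaces.

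For the geometric claim, a $\cp^{n-1}$-bundle $E \to B$ with structure group $U(n)$ is the projectivization $\bP(V)$ of a rank-$n$ complex vector bundle $V \to B$, classified by a map $B \to BU(n)$. By the surjectivity just established, there is a closed oriented manifold $B'$ and complex line bundles $L_1, \ldots, L_n$ on $B'$ such that $[B', \bigoplus L_i] = [B, V]$ in $\Omega^{\SO}_{*}(BU(n))$, and then $\bP(L_1 \oplus \cdots \oplus L_n) \to B'$ is a $\cp^{n-1}$-bundle with structure group $U(1)^n$ whose total space is oriented cobordant to $E$. The main obstacle is the Kronecker duality step: on the infinite-dimensional spaces $BU(n)$ and $BU(1)^n$ one must carefully justify that the split inclusion on cohomology dualizes to a split surjection on bordism, since $\MSO^{*}$ is an inverse and $\Omega^{\SO}_{*}$ a direct limit over skeleta. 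As a safety net, one can reduce via the collapsed Atiyah--Hirzebruch spectral sequence to the classical surjection $H_{*}(BU(1)^n;\bZ) \twoheadrightarrow H_{*}(BU(n);\bZ)$, which is $\bZ$-linearly dual to the inclusion $\bZ[c_1,\ldots,c_n] \hookrightarrow \bZ[x_1,\ldots,x_n]$ of symmetric polynomials into polynomials and is split on the nose by the Schubert basis.
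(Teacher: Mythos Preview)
Your approach and the paper's coincide: both rest on $\MSO$ being complex oriented. The paper's proof is two lines---note that the forgetful map $\MU\to\MSO$ supplies a complex orientation, and then cite the standard computation (Adams, \emph{Stable Homotopy and Generalised Homology}, Lemma~II.4.1, or Kochman, Prop.~4.3.3) that for any complex-oriented $E$ the induced map $E_*(BU(1)^n)\to E_*(BU(n))$ is surjective.

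Where you try to supply that argument yourself there is a genuine gap. You rightly flag the Kronecker-duality step as delicate, but your ``safety net'' does not close it: you justify the collapse of the Atiyah--Hirzebruch spectral sequence for $\Omega_*^{\SO}(BU(n))$ by the even-cell structure of $BU(n)$ and torsion-freeness of its cohomology. That would suffice for a theory with coefficients concentrated in even degrees, but $\Omega_*^{\SO}$ is \emph{not} so concentrated (e.g.\ $\Omega_5^{\SO}\cong\bZ/2$), so parity alone does not exclude differentials such as a potential $d_2\colon H_4(BU(n))\otimes\Omega_4^{\SO}\to H_2(BU(n))\otimes\Omega_5^{\SO}$. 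The collapse \emph{is} true, but it genuinely needs the complex orientation rather than the cell structure: the classes $\beta_i\in\Omega_{2i}^{\SO}(\cp^\infty)$ determined by the orientation, pushed to $\Omega_*^{\SO}(BU(n))$ via $BU(1)\to BU(n)$ and multiplied using the Whitney-sum $H$-space structure, give a free $\Omega_*^{\SO}$-module basis and hence enough permanent cycles. Once one has that description of $\Omega_*^{\SO}(BU(n))$ the surjectivity from $\Omega_*^{\SO}(BU(1)^n)$ is immediate from the construction of these generators, so the detour through cohomology and Kronecker pairing is unnecessary. Equivalently, one can iterate the \emph{homology} form of the projective-bundle theorem: for complex-oriented $E$ and a rank-$n$ complex bundle $V\to X$, the projection $\bP(V)\to X$ induces a split surjection on $E_*$, and running this down the tower of projective bundles realizing the flag bundle $BU(1)^n\to BU(n)$ yields the result without any dualization.
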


\begin{proof}
The ring spectrum $\MSO$ is complex oriented; in fact the forgetful map $\MU \to \MSO$ defines a complex orientation, using Lemma II.4.6. of \cite{Adams}. By \cite[Proposition 4.3.3]{Kochman} or \cite[Lemma II.4.1]{Adams}, surjectivity of \eqref{eq:1} follows. 
\end{proof}

\begin{proof}[Proof of Theorem \ref{MainThm:nonspin}]
Let $A_d \subset \Omega_d^\SO$ be the subgroup of cobordism classes which contain a representative that has an admissible splitting. We first consider the case $d \neq 8$, $d \geq 5$, and have to prove that $A_d = \Omega_d^{\SO}$. 

Let $k \geq 2$ and let $v_{4k}$ be the generator described in Theorem \ref{thm:structure-oriented-bordism}. For an arbitrary $x \in  \Omega^\SO_*$, the element $xv_{4k}$ can be represented by a $\cp^{n-1}$-bundle with structure group $U(2) \times U(n-2)$, and $n\geq 4$, by Proposition 
\ref{prop:reductionstructuregroup-oriented}. Therefore, using Theorem \ref{thm:splitting-of-projective-bundles}, the ideal $(v_8,v_{12}, \ldots ) \subset \Omega_*^\SO$ is contained in $A_*$. An analogous argument shows that the ideal $(Z)$ ($Z$ as in Theorem \ref{thm:structure-oriented-bordism}) is contained in $A_*$. The only additive generators of $\Omega_*^{\SO}$ in dimensions $\geq 5$ which are missed by that argument are the elements of the form 
\[
z_5^a v_4^b, \; 5a+4b \geq 5. 
\]
The manifold $M$ in Proposition \ref{prop:lastcase} has an admissible splitting and since $[M] \neq 0$ and $\Omega_5^\SO \cong \bZ/2$, we have $[M]=z_5$ and so $z_5 \in A_5$. 

Propositions \ref{prop:lastcase}, together with Proposition \ref{cor:decomposables-have-admissibledecomposition}, implies that $z_5 x \in A_*$ when $x\in \Omega_*^\SO$ is of positive dimension. 

For $b\geq 3$, we write $v_4^b = [M \times \cp^2]$ with $\dim (M) \geq 6$ and use Proposition \ref{cor:decomposables-have-admissibledecomposition} to verify that $v_4^b \in A_*$.

These arguments cover all cases, except $d=8$. In that case, we certainly have $[\hp^2], [\cp^4] \in A_8$, and therefore the index of $A_8$ in $\Omega_8^\SO$ divides $3$, by Theorem \ref{thm:structure-oriented-bordism}. Hence if $[M] \in \Omega_8^\SO$, $M$ is cobordant modulo $A_8$ to either $W^8$, $\cp^2 \times \cp^2$, or $\cp^2 \times \cp^2$ with the reverse orientation, and so if $M$ is simply connected and not spinnable, $\Riem^+ (M)$ has the homotopy type of either $\Riem^+ (W^8)$ or $\Riem^+ (\cp^2 \times \cp^2)$. 
\end{proof}

\subsection{The structure of the oriented cobordism ring}\label{subsec:references}

This section is a guide through the literature on oriented cobordism. For a commutative ring $R$, a class $c \in H^* (BO;R)$ and $x \in \Omega_*^{\SO}$, we denote by $c(x) \in R$ the characteristic number $c(x)= \langle c(TM),[M]\rangle\in R$, where $[M] = x$. Recall furthermore the classes $s_n \in H^{4n}(BO;\bZ)$ from \cite[\S 16]{MilnorStasheff}, denoted $s_n (p)$ in loc.cit. 

\begin{thm}\label{thm:structuremso1}
Let $T_* \subset \Omega^{\SO}_*$ be the ideal of torsion elements. Assume that $v_{4k} \in \Omega_{4k}^{\SO}$, $k \geq 1$, are classes such that 
\begin{equation}\label{s-numbers}
s_k (v_{4k}) = \begin{cases}
 1 & 2k+1 \, \text{not a prime power}\\
 p & 2k+1 = p^s, \; s>0, \; p \; \text{prime},
\end{cases}
\end{equation}
and let $\overline{v_{4k}} \in \Omega_{4k}^{\SO}/T_{4k}$ be the residue class in the torsionfree quotient. Then 
\[
 \Omega_*^{\SO}/T_* = \bZ[\overline{v_{4k}} \vert k >0].
\]
Furthermore, $ 2T_*=0$.
\end{thm}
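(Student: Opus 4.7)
The plan is to assemble both statements from the classical structure theorems for oriented cobordism; nothing here is new, and the task is to organize the work of Thom, Milnor, Wall, Novikov, and Averbukh. The assertion $2T_* = 0$ is Wall's theorem (\emph{Determination of the cobordism ring}, Ann.\ of Math.\ 72 (1960)): every torsion class in $\Omega_*^{\SO}$ has order exactly $2$.

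For the first assertion, I would proceed in two steps. Step one establishes that $\Omega_*^{\SO}/T_*$ is a polynomial ring over $\bZ$ on one generator in each degree $4k$, $k \geq 1$. Rationally this is Thom's theorem $\Omega_*^{\SO} \otimes \bQ = \bQ[\cp^2, \cp^4, \ldots]$, which fixes the rank of $\Omega^\SO_{4k}/T_{4k}$ in each degree. Integrally, the polynomial structure on the torsion-free quotient is another part of Wall's analysis. An alternative route at odd primes uses that the complex orientation $\MU \to \MSO$ induces a surjection $\Omega_*^{\mathrm{U}}[\tfrac{1}{2}] \to \Omega_*^{\SO}[\tfrac{1}{2}]$ that transports the polynomial structure of $\pi_*(\MU) = \bZ[x_1, x_2, \ldots]$ into degrees divisible by $4$.

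Step two identifies admissible polynomial generators via the $s$-number criterion. The Newton polynomial $s_k \in H^{4k}(BO;\bZ)$ satisfies the additive identity $s_k(V \oplus W) = s_k(V) + s_k(W)$; consequently $s_k$ vanishes on classes represented by products of positive-dimensional manifolds and descends to a homomorphism on the indecomposable quotient $Q_{4k}(\Omega_*^{\SO}/T_*) \cong \bZ$. A class $v_{4k}$ is admissible as the $k$-th polynomial generator if and only if $s_k(v_{4k})$ generates this $\bZ$. The Milnor--Novikov criterion for $\pi_*(\MU) = \bZ[x_1, x_2, \ldots]$ with $|x_n|=2n$ asserts that $x \in \pi_{2n}(\MU)$ is a polynomial generator iff $s_n(x) = \pm 1$ when $n+1$ is not a prime power, and $s_n(x) = \pm p$ when $n+1 = p^s$. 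Transporting via $\MU \to \MSO$ and setting $n = 2k$ yields the stated condition on $2k+1$; the sign ambiguity can be absorbed by possibly replacing $v_{4k}$ with $-v_{4k}$, and the $2$-local case is handled directly by Wall's explicit generators.

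The main obstacle is coordinating the localization arguments cleanly: odd-primary information is most efficiently extracted from $\MU$ via the Milnor--Novikov computation, while the $2$-local polynomial structure and the torsion bound require Wall's explicit analysis. All pieces are standard and can be assembled from Stong's \emph{Notes on Cobordism Theory} (Princeton, 1968).
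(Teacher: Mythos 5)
Your proposal is correct and takes essentially the same route as the paper: both treat the theorem as an assembly of classical results from Thom, Milnor, Novikov and Wall, with Stong's book as the standard reference. The only divergence is a minor attribution issue: the paper decomposes $2T_*=0$ into two separate citations — Milnor (\emph{On the cobordism ring $\Omega^*$ and a complex analogue}, Theorem 5) for the absence of odd torsion, and Wall (\emph{Determination of the cobordism ring}, Theorem 2) for the absence of elements of order $4$ — whereas you credit the whole statement to Wall. Similarly, the paper credits the computation of $\Omega_*^{\SO}/T_*$ to Milnor (unpublished, via Stong p.\ 207) rather than to Wall. These are attribution nuances rather than mathematical gaps; your $\MU \to \MSO$ transport at odd primes and the Milnor--Novikov $s$-number criterion are exactly the standard route, and your remark that $2k+1$ is odd so the prime-power case always involves an odd prime (forcing a separate $2$-local argument) is correct.
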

The computation of $\Omega_*^{\SO} / T_*$ is apparently due to Milnor (unpublished, but see \cite{Stong}, p. 207 for an account). Milnor \cite[Theorem 5]{MilnorCobordism} also proved that there is no odd primary torsion in $\Omega_*^{\SO}$, and Wall \cite[Theorem 2]{Wall60} showed that there is no element of order $4$. 

\begin{prop}\label{prop:torsionfreepart}
One can choose $v_4 = [\cp^2]$ and $v_8 = [\cp^4]$. For $k \geq 3$, one can choose $v_{4k}$ as a linear combination of total spaces of $\cp^n$-bundles with structure group $U(n+1)$, with $n \geq 3$. 
\end{prop}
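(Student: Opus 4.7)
My plan is as follows: handle $k=1,2$ directly, and for $k\geq 3$ use Milnor hypersurfaces. For $k=1,2$ the choices $v_4=[\cp^2]$ and $v_8=[\cp^4]$ work. Using the Euler sequence $T\cp^{2k}\oplus\mathcal{O}\cong(2k+1)\mathcal{O}(1)$ together with additivity of $s_k$ under direct sums and the identity $s_k(\mathcal{O}(1)_\bR)=c_1(\mathcal{O}(1))^{2k}$, one computes $s_k([\cp^{2k}])=2k+1$, which equals the prime $3$ for $k=1$ and the prime $5$ for $k=2$, matching the requirement of Theorem~\ref{thm:structuremso1}. Moreover $\cp^{2k}$ is trivially a $\cp^{2k}$-bundle over a point with structure group $U(2k+1)$.

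For $k\geq 3$ I would use the Milnor hypersurfaces $H_{i,j}\subset\cp^i\times\cp^j$ cut out by $\sum_{\ell=0}^{\min(i,j)}x_\ell y_\ell=0$, with $i+j=2k+1$. This is a smooth complex hypersurface of complex dimension $2k$. Assuming $i\leq j$, the first projection exhibits $H_{i,j}$ as the projectivization of the complex rank-$j$ bundle $\ker(\mathcal{O}^{i+1}\to\mathcal{O}(1))$ on $\cp^i$, hence as a $\cp^{j-1}$-bundle with structure group $U(j)$.

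The key input is the classical formula
\[
s_k([H_{i,j}])=-\binom{2k+1}{i},
\]
which I would derive from the adjunction sequence $TH_{i,j}\oplus\mathcal{O}(1,1)|_{H}=T(\cp^i\times\cp^j)|_{H}$, the Poincar\'e-dual identity $\iota_*[H_{i,j}]=(\zeta+\eta)\cap[\cp^i\times\cp^j]$ (with $\zeta,\eta$ the hyperplane classes), and a binomial expansion in $H^*(\cp^i\times\cp^j)$. Combined with the elementary number-theoretic identity that $\gcd\{\binom{2k+1}{i}:0<i<2k+1\}$ equals $p$ if $2k+1=p^s$ and $1$ otherwise, a B\'ezout argument produces integers $a_i$ with $\sum_i a_i\binom{2k+1}{i}$ equal to the prescribed value of $s_k(v_{4k})$ up to sign; setting $v_{4k}:=-\sum_i a_i[H_{i,2k+1-i}]$ and, if necessary, reversing orientations to absorb an overall sign, yields the desired class.

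Finally, the dimension constraint $n\geq 3$ is automatic: for $k\geq 3$ we have $2k+1\geq 7$, so $\max(i,2k+1-i)\geq k+1\geq 4$ for every $1\leq i\leq 2k$. Using the projective-bundle presentation of $H_{i,2k+1-i}$ over $\cp^{\min(i,2k+1-i)}$, the fiber $\cp^{\max(i,2k+1-i)-1}$ has dimension at least $3$. The main technical obstacle is the derivation of the formula for $s_k([H_{i,j}])$; it is a standard but careful Chern-class bookkeeping computation, with no conceptual subtlety beyond it.
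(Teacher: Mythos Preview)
Your overall strategy coincides with the paper's: compute $s_k$ for $\cp^{2k}$ and for the Milnor hypersurfaces, and use the elementary gcd identity for the middle binomial coefficients to produce the required $v_{4k}$ as an integer combination. There is, however, a genuine gap in the step where you invoke $s_k([H_{i,j}])=-\binom{2k+1}{i}$ for all $1\leq i\leq 2k$.

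That formula is only valid when both $i\geq 2$ and $j\geq 2$. For $i=1$ one computes, with $\alpha,\beta$ the hyperplane classes on $\cp^1\times\cp^{2k}$, that $s_{2k}(T(\cp^1\times\cp^{2k}))=(2k+1)\beta^{2k}$ (the term $2\alpha^{2k}$ vanishes), so
\[
s_k[H_{1,2k}]=\int_{\cp^1\times\cp^{2k}}(\alpha+\beta)\bigl((2k+1)\beta^{2k}-(\alpha+\beta)^{2k}\bigr)=(2k+1)-\binom{2k+1}{1}=0,
\]
and symmetrically $s_k[H_{2k,1}]=0$. (This is also consistent with the paper's remark that $H_{1j}$ is nullbordant.) Thus the values you can actually realize as $s_k$ of a combination of the $H_{i,2k+1-i}$ are the multiples of $\gcd\{\binom{2k+1}{i}:2\leq i\leq 2k-1\}$, and this gcd can be strictly larger than the target. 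For instance at $k=4$ (so $2k+1=9=3^2$, target value $3$) one has $\binom{9}{2}=36$, $\binom{9}{3}=84$, $\binom{9}{4}=126$, and $\gcd(36,84,126)=6$, so no integer combination of the $H_{i,j}$ alone has $s_4=\pm 3$.

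The fix is exactly what the paper does: include $\cp^{2k}$ itself in the list for $k\geq 3$. It is a $\cp^{2k}$-bundle over a point with $2k\geq 6\geq 3$, and it contributes $s_k=2k+1=\binom{2k+1}{1}$, restoring the missing binomial coefficient to the gcd. After that your B\'ezout argument and your fiber--dimension check go through unchanged.
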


\begin{proof}
If $2k+1$ is a prime, we can take $v_{4k}=[\cp^{2k}]$, because 
\[
 s_k (\cp^{2k})= 2k+1, 
\]
see \cite{HirzBergJung}, p. 42. This applies to $k=1,2$. If $k \geq 3$ and $2k+1$ is not a prime, we have to use further manifolds, the \emph{Milnor manifolds}. To define them, let $i \leq j$. The tautological line bundle $L_i \to \cp^i$ embeds canonically into the trivial vector bundle $\underline{\bC^{j+1}}$, and we let $L_{ij}^\bot$ be the orthogonal complement of $L_i$ in $\underline{\bC^{j+1}}$. The Milnor manifold is 
\[
 H_{ij} := \bP (L_{ij}^\bot). 
\]
By construction, $H_{ij}$ is a $\cp^{j-1}$-bundle over $\cp^i$ with structure group $U(j)$. Note that $H_{0j}=\cp^{j-1}$. The manifolds $H_{1j}$ are nullbordant. If $i \geq 2$ and $i+j-1 = 2k$, we have 
\[
 s_k (H_{ij}) = - \binom{i+j}{i}= - \binom{2k+1}{i}, 
\]
see \cite{HirzBergJung}, p. 43. 

For $k \geq 3$, $\cp^{2k}$ as well as $H_{ij}$ with $i+j=2k+1$, $i \geq 2$ and $j \geq 4$, are bundles of projective space as asserted. Therefore, by taking a suitable linear combination of these manifolds, we get a class $v_{4k} \in \Omega^{\SO}_{4k}$ with 
\[
 s_k (v_{4k})= \gcd \{ 2k+1, \binom{2k+1}{2k+1-j}, 4 \leq j  \leq 2k-1 \} . 
\]
If $k \geq 3$, this number is the same as 
\[
 d_{2k+1} := \gcd \{ \binom{2k+1}{n} \vert 1 \leq n \leq 2k \}.
\]
It is a rather unknown, but elementary fact \cite{Ram} that 
\[
\gcd \{ \binom{n}{j} \vert 1 \leq j \leq n-1\} = \begin{cases}
1 & n \, \text{not a prime power}\\
 p & n = p^s, \; s>0, p \; \text{prime}.       
 \end{cases}
\]
From that, the claim follows. 
\end{proof}

Wall also determined the structure of $T_*$ to which we turn now. We denote by $[M]^O \in \Omega_n^{O}$ the unoriented cobordism class of a closed $n$-manifold $M$. 
Following \cite{Wall60}, let $\mathfrak{W}_* \subset \Omega_*^O$ be the set of all (unoriented) cobordism classes which have representatives $M$ such that the Stiefel-Whitney class $w_1 (TM) \in H^1 (M;\bZ/2)$ are the $\pmod 2$ reduction of an integral class $w \in H^1 (M;\bZ)=[M;S^1]$. This is a subalgebra of $\Omega_*^O$ \cite[Lemma 3]{Wall60}, and the forgetful map $\rho:\Omega_*^{\SO} \to \Omega_*^O$ factors through $\mathfrak{W}_*$. 

Let $u: M^n \to S^1$ be a smooth map representing $w$ and pick a regular value $a$ of $u$. The manifold $V := u^{-1}(a)$ is orientable, and $2[V]=0 \in \Omega_{n-1}^{\SO}$ \cite[Lemma 1]{Wall60}. Theorem 3 of \cite{Wall60} says that the assignment $M \mapsto V$ gives a well-defined map
\[
\partial : \mathfrak{W}_* \to \Omega^{\SO}_{*-1},
\]
which is a derivation and fits into a long exact sequence
\[
\ldots \to \Omega_n^{\SO} \stackrel{2}{\to} \Omega_n^{\SO} \stackrel{\rho}{\to} \mathfrak{W}_n \stackrel{\partial}{\to} \Omega_{n-1}^{\SO} \to \ldots 
\]
This and the fact that $2 T_*=0$ implies $T_* = \im (\partial)$. For the description of the structure of $\mathfrak{W}_*$, we need the following manifolds. Let $P(m,n)$ be the Dold manifold $S^m \times_{C_2} \cp^n \to \rp^m$, where $C_2$ acts on $\cp^n$ by complex conjugation. 
Furthermore, let $A: P(m,n) \to P(m,n)$ be the map induced by $(x,z) \mapsto (Rx,z)$, where $R$ is a reflection along a hyperplane. We define $Q(m,n)$ as the mapping torus of $A$; there is an obvious fibre bundle $u:Q(m,n) \to S^1$ with fibre $P(m,n)$. 
If $m$ is odd and $n$ is even, $P(m,n)$ is orientable, and $A$ reverses orientation, and $u$ defines an integral lift of $w_1 (TQ(m,n))$ in this situation. Hence $[P(m,n)]^O$ and $[Q(m,n)]^O$ are elements of $\mathfrak{W}_*$, if $m$ is odd and $n$ is even. 

\begin{thm}\label{thm:structuremso2}
If $k$ is not a power of $2$, write $k=2^{r-1} (2s+1)$, $s \neq 0$, and define 
\[
y_{2k-1}:= [P(2^r-1,2^r s)]^O \in \mathfrak{W}_{2k-1}
\]
and 
\[
y_{2k}:= [Q(2^r-1,2^r s)]^O \in \mathfrak{W}_{2k}.
\]
Let furthermore $y_{2^r}:=[\cp^{2^{r-1}}]^O \in \mathfrak{W}_{2^r}$. Then $\mathfrak{W}_*$ is a polynomial algebra over $\bF_2$, with generators the elements $y_{2k}, y_{2k-1}$ ($k$ not a power of $2$) and $y_{2^r}$, $r \geq 2$. 
\end{thm}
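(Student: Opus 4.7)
The plan is to follow Wall's original strategy from \cite{Wall60}: first verify that the listed classes lie in $\mathfrak{W}_*$, then compute the Poincar\'e series of $\mathfrak{W}_*$ from the exact sequence just above the theorem, and finally check that the proposed $y$-classes generate freely by a Stiefel--Whitney number calculation.

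For the inclusion step, $\cp^{2^{r-1}}$ is orientable and so $[\cp^{2^{r-1}}]^O\in\mathfrak{W}_{2^r}$ trivially. For $P(m,n)=S^m\times_{C_2}\cp^n$ with $m$ odd and $n$ even, the antipodal action on $S^m$ reverses orientation and so does complex conjugation on $\cp^n$, hence the diagonal $C_2$-action preserves orientation and $P(m,n)$ is orientable; in particular $w_1(TP(m,n))=0$. For $Q(m,n)$ the projection $u\colon Q(m,n)\to S^1$ is a fibre bundle with orientable fibre whose monodromy $A$ reverses orientation, so $w_1(TQ(m,n))$ coincides with $u^*([S^1]\!\!\mod 2)$ and therefore admits the integral lift $u^*[S^1]\in H^1(Q(m,n);\bZ)$. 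For the Poincar\'e series, the four-term exact sequence breaks (using $2T_*=0$) into short exact sequences
\[
0\to\Omega_n^{\SO}/2\Omega_n^{\SO}\to\mathfrak{W}_n\xrightarrow{\partial}T_{n-1}\to 0,
\]
where $T_{n-1}\subset\Omega_{n-1}^{\SO}$ is the torsion subgroup. Combining this with Theorem \ref{thm:structuremso1} (which gives $\Omega_*^{\SO}/T_*$ as a polynomial algebra on generators in every degree $4k$) and Wall's separate determination of the Poincar\'e series of $T_*$ yields the $\bF_2$-dimension of $\mathfrak{W}_n$ in each degree, and a direct count shows that this matches the Poincar\'e series of a polynomial algebra on generators in the stated bidegrees.

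The principal obstacle is verifying algebraic independence of the $y_{2k-1}$, $y_{2k}$, $y_{2^r}$ by computing Stiefel--Whitney numbers. For the Dold manifolds, Dold's classical formula expresses $w(TP(m,n))$ in terms of the pullback of the tautological line over $\rp^m$ and of the conjugation-twisted tangent bundle over $\cp^n$; evaluating the $s$-number $s_{m+2n}(P(2^r-1,2^r s))$ reduces to a binomial-coefficient computation of type $\binom{2^r(2s+1)-1}{2^r-1}\pmod 2$ which is nonzero by Kummer's theorem, in the same spirit as the calculation in Proposition \ref{prop:torsionfreepart}. For the mapping torus $Q(m,n)$, the tangent bundle splits as the pullback of $TP(m,n)$ along the bundle projection to $B(\bZ)\simeq S^1$ together with a trivial rank-one summand, and the derivation $\partial$ applied to $[Q(m,n)]^O$ returns (up to orientation) $[P(m,n)]$ as an element of $T_{2k-1}$; this forces $y_{2k-1}$ and $y_{2k}$ to generate algebraically independent directions in the associated graded of $\mathfrak{W}_*$ for the filtration defined by powers of $\partial$. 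Matching the resulting generator count in each degree against the Poincar\'e series computed in the previous step then identifies $\mathfrak{W}_*$ as the asserted polynomial algebra over $\bF_2$.
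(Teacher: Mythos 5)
Your high‑level strategy — verify the classes lie in $\mathfrak{W}_*$, show they are algebraically independent, and then match Poincar\'e series — is the right shape, and agrees with the paper's. But the crucial step, computing $HS_{\mathfrak{W}_*}(t)$, is done in a way that is either circular or rests on an unjustified input. You derive the short exact sequence $0\to\Omega_n^{\SO}/2\Omega_n^{\SO}\to\mathfrak{W}_n\to T_{n-1}\to 0$ (which is correct) and then invoke ``Wall's separate determination of the Poincar\'e series of $T_*$.'' However, Theorem \ref{thm:structuremso1} only gives the torsion‑free quotient $\Omega_*^{\SO}/T_*$ and the fact $2T_*=0$; it says nothing about the \emph{size} of $T_*$. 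Since $T_*=\im\partial$, the dimensions $\dim_{\bF_2}T_{n-1}$ are determined only \emph{after} one understands $\mathfrak{W}_*$ and the map $\partial$ — exactly what you are trying to compute. The paper avoids this circularity by computing $HS_{\mathfrak{W}_*}(t)$ directly from the known structure of the unoriented ring $\Omega_*^O$ (Thom's theorem) via Atiyah's formula (9) in \cite{AtiyahCobordism}, which relates $\mathfrak{W}_*\subset\Omega_*^O$ to $\Omega_*^O$ without reference to $\Omega_*^{\SO}$ at all. Some genuine external input of this kind is needed to break the circle, and your proposal does not supply one.

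Two further, smaller issues. First, the ``filtration defined by powers of $\partial$'' is not well‑defined: $\partial$ maps $\mathfrak{W}_*$ into $\Omega_{*-1}^{\SO}$, not into $\mathfrak{W}_*$, so $\partial^2$ has no meaning. The operator that does square to zero on $\mathfrak{W}_*$ is $d=\rho\circ\partial$, and it is this differential that Wall exploits; your argument should be reformulated in those terms. Second, the claim that $s_{m+2n}(P(2^r-1,2^r s))$ reduces to a binomial coefficient $\binom{2^r(2s+1)-1}{2^r-1}\pmod 2$ is not Dold's formula: the correct statement is \cite[Satz 3]{Dold}, which gives a different (and slightly more involved) characteristic‑number computation, and it is that result, together with Thom's theorem and \cite[Lemma 6]{Wall60}, that establishes algebraic independence in $\Omega_*^O$. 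As stated, your $s$‑number claim is not verified, so the algebraic‑independence step is also incomplete.
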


\begin{proof}[Short guide through the proof]
This is shown in \cite[\S 5-6]{Wall60}, but not explicitly stated there, hence we recall the argument, following the streamlined version \cite{AtiyahCobordism} of Wall's proof by Atiyah. 
Recall that Thom \cite[Th\'eor\`eme IV.12]{Thom} proved that $\Omega_*^O$ is a polynomial algebra over $\bF_2$, with one generator $x_i$ for each $i$ which is not of the form $2^r-1$, see also \cite[p. 96]{Stong}. Combining Thom's theorem with \cite[Satz 3]{Dold} and the proof of \cite[Lemma 6]{Wall60}, the elements in the theorem are algebraically independent, and hence generate a polynomial subalgebra $\mathfrak{V}_* \subset \mathfrak{W}_*$. The Hilbert--Poincar\'e series $HS_{\mathfrak{V}_*} (t)=\sum_{n=0}^\infty \dim_{\bF_2} (\mathfrak{V}_n) t^n \in \bZ[[t]]$ is easily calculated. 

Using formula (9) of \cite{AtiyahCobordism} and the structure of $\Omega_*^O$, one computes the Hilbert--Poincar\'e series $HS_{\mathfrak{W}_*} (t)$ and verifies that both series agree, whence $\mathfrak{V}_*=\mathfrak{W}_*$.
\end{proof}

The following finishes the description of the multiplicative generators of $\Omega_*^\SO$.

\begin{cor}\label{cor:torsioninMSO}
Let $x \in T_*$ be a torsion element in $\Omega_*^{\SO}$. Then $x$ is a linear combination of manifolds, which are either 
\begin{enumerate}
\item $\cp^n$-bundles with structure group $C_2$ and $n \geq 3$, 
\item or products of $P(1,2)$ with some other oriented manifold. 
\end{enumerate}
\end{cor}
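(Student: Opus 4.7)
The strategy rests on the Wall exact sequence just displayed, which identifies $T_*$ with $\im(\partial : \mathfrak{W}_* \to \Omega^{\SO}_{*-1})$, combined with the polynomial-algebra description of $\mathfrak{W}_*$ in Theorem \ref{thm:structuremso2}. The plan is to compute $\partial$ on each of the three families of generators and then propagate through arbitrary elements of $\mathfrak{W}_*$ by the derivation property.

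First I would compute $\partial$ on generators. The generator $y_{2^r} = [\cp^{2^{r-1}}]^O$ is represented by a complex (hence oriented) manifold, and the generator $y_{2k-1} = [P(2^r-1,\, 2^r s)]^O$ is also represented by an orientable manifold, because the diagonal $C_2$-action on $S^{2^r-1} \times \cp^{2^r s}$ is orientation-preserving (the antipodal map on $S^{2^r-1}$ is orientation-preserving since $2^r-1$ is odd, and complex conjugation on $\cp^{2^r s}$ is orientation-preserving since $2^r s$ is even). Both therefore lie in $\rho(\Omega^{\SO}_*)$, and exactness of the Wall sequence forces $\partial y_{2^r} = \partial y_{2k-1} = 0$. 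For $y_{2k} = [Q(2^r-1,\, 2^r s)]^O$, the mapping-torus projection $u : Q(2^r-1,\,2^r s) \to S^1$ is an integral lift of $w_1$ whose regular fiber is the oriented manifold $P(2^r-1,\,2^r s)$, so $\partial y_{2k} = [P(2^r-1,\,2^r s)]$. Exploiting the derivation property of $\partial$ together with the $\Omega^{\SO}_*$-module compatibility $\partial(\rho(X)\, z) = X \cdot \partial(z)$, I would conclude that every element of $T_* = \partial(\mathfrak{W}_*)$ is an $\bF_2$-linear combination of classes of the form $[N \times P(2^r-1,\,2^r s)]$, where $N$ is a closed oriented manifold.

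Finally, I would split on the value $n := 2^r s$. If $(r,s) = (1,1)$, so $n = 2$ and $2^r-1 = 1$, then the factor is $P(1,2)$ and the term $N \times P(1,2)$ is of type (ii). Otherwise $n = 2^r s \geq 3$, and $P(2^r-1,\,2^r s) = S^{2^r-1} \times_{C_2} \cp^n$ is tautologically a $\cp^n$-bundle over $\rp^{2^r-1}$ with structure group $C_2$; the product $N \times P(2^r-1,\,2^r s)$ is then a $\cp^n$-bundle over $N \times \rp^{2^r-1}$ with structure group $C_2$ and $n \geq 3$, of type (i). The main obstacle I anticipate is the derivation step when a monomial in $\mathfrak{W}_*$ contains several $y_{2k}$-factors with odd exponents simultaneously, since then the cofactor of $\partial y_{2k}$ sits in $\mathfrak{W}_*$ rather than in $\Omega^{\SO}_*$; this is resolved by noting that $\rho(\Omega^{\SO}_*)$ contains the subring $\bF_2[y_{2^r},\, y_{2k-1},\, y_{2k}^2]$ (because $\partial y_{2k}^2 = 0$ in characteristic $2$), so that $\mathfrak{W}_*$ is generated over $\rho(\Omega^{\SO}_*)$ by the square-free monomials in the $y_{2k}$'s, on which $\partial$ can be computed directly from Wall's explicit geometric formula.
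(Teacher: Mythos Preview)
Your setup follows the paper exactly: $T_*=\im(\partial)$, the generators $y_{2^r}$ and $y_{2k-1}$ lie in $\rho(\Omega^{\SO}_*)$ so $\partial$ annihilates them, $\partial y_{2k}=[P(2^r-1,2^rs)]$, and via $\partial(y_{2k}^2)=0$ one reduces to a factor in $\rho(\Omega^{\SO}_*)$ times a square-free monomial in the $y_{2k}$'s. The gap is your central claim that $T_*$ is spanned by \emph{products} $N\times P(2^r-1,2^rs)$ with $N$ oriented. For a square-free monomial $y_{2k_1}\cdots y_{2k_m}$ with $m\ge 2$ and distinct $k_i$, the Leibniz identity (which is an identity in $\mathfrak{W}_*$ after composing with $\rho$) produces terms $\rho(z_{2k_i-1})\cdot\prod_{j\ne i}y_{2k_j}$, but the cofactor $\prod_{j\ne i}y_{2k_j}$ does \emph{not} lie in $\rho(\Omega^{\SO}_*)$, so these summands cannot individually be lifted to $\Omega^{\SO}_*$ as products. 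Geometrically, Wall's recipe represents $\partial(y_{2k_1}\cdots y_{2k_m})$ by restricting the bundle $\prod_i Q(m_i,n_i)\to (S^1)^m$ to the kernel $\Delta$ of the multiplication map $(S^1)^m\to S^1$; this twisted fibre product has no evident splitting as $N\times P(\ldots)$, and you have not supplied one.

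The paper does not attempt the product reduction. The geometric ingredient you are missing is the fibre bundle $Q(m,n)\to Q(m,0)$ with fibre $\cp^n$ and structure group $C_2$ (induced by $P(m,n)\to\rp^m$), which is compatible with the projections to $S^1$. Applying this to one chosen factor $Q(m_j,n_j)$ exhibits the total space over $\Delta$ directly as a $\cp^{n_j}$-bundle with structure group $C_2$ over an oriented base; one then picks $j$ so that $n_j=2^{r_j}s_j\ge 3$, which is possible unless every factor is $y_6$. Only in the residual case of a pure power $y_6^{2m+1}$ does one fall back on the product form $\partial(y_6^{2m+1})=z_5\cdot(\text{lift of }y_6^{2m})$, giving type~(ii). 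With this bundle argument in place of the product claim for $m\ge 2$, your proof becomes complete and coincides with the paper's.
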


\begin{proof}
Since $T_* = \im (\partial)$, it is enough to check that all monomials in the generators of $\mathfrak{W}_*$, are mapped to cobordism classes of the claimed form under $\partial$. 

Let $k, r$ and $s$ as in Theorem \ref{thm:structuremso2}. The elements $y_i$ introduced there have the preimages 
\[
z_{2k-1}:= [P(2^r-1,2^r s)] \in \Omega^{\SO}_{2k-1}
\]
and 
\[
z_{2^r}:= [\cp^{2^{r-1}}] \in \Omega_{2^r}^{\SO}
\]
under $\rho: \Omega_*^{\SO} \to \mathfrak{W}_*$. Therefore $\partial (y_{2k-1})=0$ and $\partial (y_{2^r})=0$. 

Since $\partial: \mathfrak{W}_* \to \Omega_*^{\SO}$ is a derivation
\[
\partial ( (\prod_i  y_{2k_i-1}^{a_i}) (\prod_i  y_{2^{r_i}}^{b_i}) (\prod_i y_{2k_i}^{c_i}) )= \\
 (\prod_i  y_{2k_i-1}^{a_i}) (\prod_i  y_{2^{r_i}}^{b_i})  \partial  (\prod_i y_{2k_i}^{c_i}) ).
\]
The Dold manifold $P(2^r-1,2^r s)$ is a $\cp^{2^r s}$-bundle, and $2^r s \geq 2$, with equality only if $r=s=1$, i.e. $k = 3$. The element $z_5$ is represented by $P(1,2)$. 
For all other values of $k$, $z_{2k-1}$ is represented by a bundle of complex projective spaces of dimension at least $3$. 

Furthermore, $ \partial  (\prod_i y_{2k_i}^{c_i}) )$ is represented by a manifold $V$ of the following type. Take a product $Q(m_1,n_1) \times \ldots \times Q(m_p,n_p) \to (S^1)^i$, which is a fibre bundle with fibre a product of Dold manifolds, restrict this fibre bundle to the kernel $\Delta \subset (S^1)^i$ of the multiplication map $\mu:(S^1)^p \to S^1$, and take the total space. 

Now there is a fibre bundle $Q(m,n) \to Q(m,0)$ with fibre $\cp^n$ and structure group $C_2$ (complex conjugation). This is compatible with the map to $S^1$. Therefore, $V$ is the total space of a bundle of projective spaces with structure group $C_2$. The complex dimension of the fibre can be chosen to be at least $3$, except in the case of $y_6^c$. If $c$ is even, then $\partial (y_6^c)=0$, and furthermore
\[
\partial (y_6^{2m+1})= z_5 z_6^{2m}.
\]
The claim follows.
\end{proof}

From the description of the generators, it is also apparent that $\Omega_5^\SO\cong \bZ/2$, that $\Omega_6^\SO=\Omega_7^\SO=0$ and that $\Omega_8^\SO$ has the $\bZ$-basis $[\cp^4]$ and $[\cp^2 \times \cp^2]$. Moreover
\[
[\hp^2 ]= 3 [\cp^2 \times \cp^2] - 2 [\cp^4]: 
\]
both sides of the equation have the same signature; furthermore $s_2 ([\cp^2 \times \cp^2])=0$, $s_2 ([\cp^4])= 5$ and $s_2 ([\hp^2])=-10$, which is easily derived from the computations in \cite{HirzBergJung}, p.5 f. Therefore, $[\cp^4]$ and $[\hp^2]$ span a subgroup of index $3$ in $\Omega_8^{\SO}$.

\bibliographystyle{plain}
\bibliography{homotopyinvariance}

\end{document}